\documentclass[11pt]{amsart}
\usepackage[T1]{fontenc}
\usepackage{lmodern,microtype,mathtools,amssymb,mathrsfs,booktabs,array,longtable}
\usepackage[
  textheight=20cm,
  textwidth=6.0in,
  centering
]{geometry}
\usepackage{tikz}
\usetikzlibrary{positioning}
\tikzset{
  coxeter/.style={
    x=1cm,
    y=.8cm,
    node distance=6mm,
    every node/.style={inner sep=2pt}
  }
}
\usepackage[colorlinks=true,linkcolor=blue!50!black,citecolor=blue!50!black,urlcolor=blue!50!black]{hyperref}
\hypersetup{pdftitle={Finite subgroups of generalized cactus groups},pdfsubject={Finite-subgroup classification for arbitrary Coxeter systems},pdfkeywords={cactus groups, Coxeter systems, finite subgroups, spherical parabolic subgroups}}
\numberwithin{equation}{section}
\newtheorem{theorem}{Theorem}[section]
\newtheorem{proposition}[theorem]{Proposition}
\newtheorem{lemma}[theorem]{Lemma}
\newtheorem{corollary}[theorem]{Corollary}
\newtheorem{maintheoremA}{Theorem}

\newtheorem{maintheoremB}[maintheoremA]{Theorem}
\theoremstyle{definition}

\theoremstyle{remark}
\newtheorem{remark}[theorem]{Remark}
\allowdisplaybreaks[2]
\title[Classification of finite subgroups of generalized cactus groups]{Classification of finite subgroups of generalized cactus groups}
\author{Junseok Kim}
\address{Department of Mathematics \\
        Technion - Israel Institute of Technology \\
        Technion City, Haifa, Israel, 3200003}
\email{jsk8818@campus.technion.ac.il}		

\subjclass[2020]{Primary 20F65; Secondary 20D15, 20F55, 20F67}
\keywords{Generalized cactus groups, finite subgroups, Coxeter groups, CAT(0) cube complexes, wreath product}
\begin{document}

\begin{abstract}
We classify the finite groups that embed into generalized cactus groups associated with arbitrary Coxeter systems. Every finite subgroup is conjugate to a subgroup of the generalized cactus group associated with a finite standard parabolic subgroup of the underlying Coxeter group. This reduces the classification to explicit lists of finite groups for the finite irreducible Coxeter systems. For types B, D, E, F, and H, we analyze cube stabilizers to obtain these lists. For type $I_2(p)$, we determine the associated generalized cactus group directly from its defining presentation. We explicitly realize every listed group as a subgroup of the corresponding generalized cactus group. When the Coxeter diagram has finitely many vertices, we determine the largest order and all possible orders of finite subgroups.

In type A, we prove directly that a finite group embeds into the ordinary cactus group on $n$ strands if and only if it embeds into a Sylow 2-subgroup of the symmetric group on $n$ letters. We also give weighted embedding criteria in types A and B and counterexamples to Genevois's conjecture on finite subgroups of cactus groups.
\end{abstract}

\maketitle

\section{Introduction}\label{sec:introduction}

For $n\geq2$, the \emph{cactus group} $J_n$ is generated by $s_{p,q}$, where $1\leq p<q\leq n$, with the following relations
\begin{equation}\label{eq:ordinarypresentation}
 \begin{aligned}
  s_{p,q}^2&=1,\\
  {}[s_{p,q},s_{r,t}]&=1
  &&\text{if }[p,q]\cap[r,t]=\emptyset,\\
  s_{p,q}s_{r,t}s_{p,q}
   &=s_{p+q-t,p+q-r}
  &&\text{if }[r,t]\subseteq[p,q].
 \end{aligned}
\end{equation}
Here, $[p,q]=\{p,p+1,\ldots,q\}$. Sending $s_{p,q}$ to the permutation that reverses $[p,q]$ and fixes every element outside this interval defines the canonical epimorphism
\[
 \Sigma_n:J_n\longrightarrow S_n.
\]
Its kernel is called the \emph{pure cactus group}.

Cactus groups play a role in coboundary categories analogous to that of braid groups in braided monoidal categories. In a coboundary category, the commutor induces isomorphisms reversing consecutive tensor factors, and these isomorphisms satisfy the defining relations of $J_n$ \cite[Section~3.1]{HenriquesKamnitzer06}. Henriques and Kamnitzer constructed such a commutor for crystals of a finite-dimensional complex reductive Lie algebra and proved that it defines a coboundary structure \cite[Theorem~6]{HenriquesKamnitzer06}.

Cactus groups also have a topological interpretation. The pure cactus group $\ker\Sigma_n$ is isomorphic to $ \pi_1\bigl(\overline{\mathcal M}_{0,n+1}(\mathbb R)\bigr)$, where $\overline{\mathcal M}_{0,n+1}(\mathbb R)$ is the real locus of the Deligne--Mumford moduli space of stable curves of genus zero with $n+1$ marked points \cite[Theorem~9]{HenriquesKamnitzer06}.

From the viewpoint of geometric group theory, Genevois gave a direct proof that the canonical Cayley graph of $J_n$ is median \cite{Genevois25}. Its completion by cubes is therefore a CAT(0) cube complex on which $J_n$ acts properly and cocompactly. Genevois used this geometry to obtain explicit solutions to the word and conjugacy problems and to study the subgroup structure of cactus groups. In particular, the pure cactus group is torsion-free, and every finite subgroup of $J_n$ is a $2$-group. These restrictions alone do not determine which finite $2$-groups embed into $J_n$ for a given $n$. Genevois proposed a description in terms of iterated wreath products in \cite[Conjecture~8.5]{Genevois25}.

In this paper, we determine which finite groups embed into cactus groups and, more generally, into generalized cactus groups associated with arbitrary Coxeter systems. Motivated by this conjecture, we obtain the following classification. The conjecture is false; we discuss its failure and give a corrected criterion in Section~\ref{sec:wreath}.

Throughout this paper, for each $n\geq1$, let $P_n\leq S_n$ be a fixed Sylow $2$-subgroup. Define
\[
 \mathcal W_0=\{1\},
 \qquad
 \mathcal W_{a+1}
 =(\mathcal W_a\times\mathcal W_a)
   \rtimes\mathbb Z/2\mathbb Z
 \quad(a\geq0),
\]
where the nontrivial element of $\mathbb Z/2\mathbb Z$ exchanges the two factors.

\begin{maintheoremA}\label{thm:mainA}
Let $n\geq2$ and let $F$ be a finite group. Write
\[
 n=\sum_{j=1}^k2^{a_j},
 \qquad a_1>\cdots>a_k\geq0,
\]
for the binary expansion of $n$. The following are equivalent:
\begin{enumerate}
 \item $F$ embeds into $J_n$;
 \item $F$ embeds into $P_n$;
 \item $F$ embeds into        $\prod_{j=1}^k\mathcal W_{a_j}$.
\end{enumerate}
\end{maintheoremA}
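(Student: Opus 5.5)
The equivalence (2)$\Leftrightarrow$(3) is classical. A Sylow $2$-subgroup of $S_{2^a}$ is the iterated wreath product $\mathcal W_a$. This is seen by induction: $S_{2^{a+1}}$ contains $(S_{2^a}\times S_{2^a})\rtimes\mathbb Z/2$ acting on two halves, and the $2$-part of $(2^{a+1})!$ equals $2\cdot(2^{a}!)_2^2$ by Legendre's formula. For general $n=\sum 2^{a_j}$, the Young subgroup $\prod_j S_{2^{a_j}}$ has odd index in $S_n$, since the $2$-adic valuation of $n!$ is $n-s_2(n)=\sum_j(2^{a_j}-1)$. Hence $P_n\cong\prod_j\mathcal W_{a_j}$. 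Since Sylow subgroups are conjugate, "embeds into $P_n$" does not depend on the choice of $P_n$.

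For (2)$\Rightarrow$(1) I will realize $P_n$ inside $J_n$ by an explicit construction. On an interval $I=[p,p+2^{a}-1]$ define recursively a subgroup $H_I\leq J_n$. Set $H_I=1$ if $a=0$. Otherwise let $H_I=\langle H_{I'},H_{I''},s_{I}\rangle$, where $I',I''$ are the left and right halves of $I$ and $s_I=s_{p,p+2^a-1}$. By the commuting relation, $H_{I'}$ and $H_{I''}$ commute, and I will check by induction that they intersect trivially (their images under $\Sigma_n$ act on disjoint sets, and induction shows $\Sigma_n$ is injective on each $H$). The third relation gives $s_I s_{r,t}s_I=s_{\bar r,\bar t}$, where bars denote reflection in $I$. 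Reflection carries the dyadic subintervals of $I'$ bijectively onto those of $I''$, so conjugation by $s_I$ swaps $H_{I'}$ and $H_{I''}$. Hence $H_I$ is a quotient of $(H_{I'}\times H_{I''})\rtimes\mathbb Z/2$. Its image under $\Sigma_n$ is that full wreath product, acting on $I$ as $\mathcal W_a$; for the top element, $s_I$ maps to the reversal of $I$, which swaps the halves. Comparing orders, $\Sigma_n|_{H_I}$ is an isomorphism onto $\mathcal W_a$. Next cut $[1,n]$ into consecutive intervals $I_j$ of lengths $2^{a_j}$. The groups $H_{I_j}$ commute pairwise, and $\Sigma_n$ maps their product isomorphically onto $\prod\mathcal W_{a_j}\cong P_n$. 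So $P_n$ embeds in $J_n$.

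For (1)$\Rightarrow$(2), let $F\leq J_n$ be finite. Since the pure cactus group is torsion-free (Genevois, cited above), $\ker\Sigma_n\cap F=1$. Thus $\Sigma_n|_F$ is injective, and $F$ is isomorphic to a subgroup of $S_n$. Genevois also shows $F$ is a $2$-group, and a $2$-subgroup of $S_n$ lies in a Sylow $2$-subgroup by Sylow's theorem, so $F$ embeds in $P_n$.

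The main obstacle is the injectivity bookkeeping in the construction for (2)$\Rightarrow$(1). Torsion-freeness of the kernel already handles this: once the subgroup generated by the dyadic reversals is known to be finite, it injects into $S_n$. Finiteness follows because the relations above exhibit $H_I$ as a quotient of the finite group $(H_{I'}\times H_{I''})\rtimes\mathbb Z/2$, so by induction $|H_I|\le|\mathcal W_a|$. Its image is all of $\mathcal W_a$, so equality of orders gives the isomorphism. I expect the remaining work to be checking carefully that the conjugation relation $s_{p,q}s_{r,t}s_{p,q}=s_{p+q-t,p+q-r}$ maps the generators of $H_{I'}$ exactly onto those of $H_{I''}$.
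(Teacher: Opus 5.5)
Your proposal is correct and follows essentially the same route as the paper. The paper proves (1)$\Rightarrow$(2) from torsion-freeness of the pure cactus group and the $2$-group property, which it reproves for all Coxeter systems in Lemma~\ref{lem:pure} rather than citing Genevois; it proves (2)$\Leftrightarrow$(3) by Legendre's formula as in Lemma~\ref{lem:sylow}, and (3)$\Rightarrow$(1) by the dyadic-interval construction of Lemma~\ref{lem:dyadic}. Your order count, bounding $|H_I|\le|\mathcal W_a|$ as a quotient of the wreath product and comparing it with $|\Sigma_n(H_I)|$, gives injectivity of $\Sigma_n$ on $H_I$ directly, so torsion-freeness is not needed in the construction step, whereas the paper invokes Lemma~\ref{lem:pure} there.
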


\medskip
Theorem~\ref{thm:mainA} was obtained independently by Marashdeh \cite[Theorem~1.1(i)]{Marashdeh26}. We extend this classification to generalized cactus groups associated with arbitrary Coxeter systems.

Let $(W,S)$ be a Coxeter system with Coxeter matrix $(m_{st})_{s,t\in S}$. For $I\subseteq S$, write $W_I=\langle I\rangle$, and let
\[
 \mathcal{I}(S)=
 \{I\subseteq S:
   I\neq\emptyset,\ W_I\text{ is finite, and }
   I\text{ is connected}\}.
\]
For $I\in\mathcal{I}(S)$, let $\omega_I$ denote the longest element of $W_I$. We introduce the terminology for Coxeter systems and specify our conventions for Coxeter diagrams in Subsection~\ref{subsec:basic}.

The \emph{generalized cactus group} associated with $(W,S)$ is defined by
\begin{equation}\label{eq:presentation}
 C_W=
 \left\langle
  \sigma_I\ (I\in\mathcal{I}(S))
  \ \middle|\
  \begin{array}{ll}
   \sigma_I^2=1,
    &I\in\mathcal{I}(S),\\
   {}[\sigma_I,\sigma_J]=1,
    &m_{st}=2
     \text{ for all }s\in I,\ t\in J,\\
   \sigma_J\sigma_I\sigma_J
    =\sigma_{\omega_J I\omega_J^{-1}},
    &I\subsetneq J
  \end{array}
 \right\rangle;
\end{equation}
see \cite{Yu23,BGPTrickle}. There is a canonical epimorphism
\[
 g_W:C_W\longrightarrow W,
 \qquad
 \sigma_I\longmapsto\omega_I.
\]
Its kernel $PC_W:=\ker g_W$ is called the \emph{generalized pure cactus group}.

For Coxeter systems with finitely many generators, Davis, Januszkiewicz, and Scott realized generalized cactus groups as the groups of all lifts of Coxeter-group actions on minimal blow-ups of reflection tilings \cite[Section~4.7]{DJS03}. Their presentations are obtained from the edges and two-dimensional cells of the universal covers. Scott studied right-angled mock reflection groups, a class that includes these generalized cactus groups. These groups are defined by actions on CAT(0) cube complexes that are simply transitive on vertices and have setwise edge stabilizers of order two. Scott described their defining relations using the squares of these complexes \cite[Section~2]{Scott08}.

Generalized cactus groups also occur in the study of Kazhdan--Lusztig cells. For finite Weyl groups, Losev constructed an action of $C_W\times C_W$ on $W$. The first factor preserves each right Kazhdan--Lusztig cell and permutes the left cells, while the second preserves each left cell and permutes the right cells \cite[Theorem~1.1]{Losev19}. Bonnaf\'e extended this construction to Coxeter systems with finitely many generators \cite[Section~5]{Bonnafe16}.

The algebraic structure of these groups has also been studied through embeddings and more general classes of group presentations. For finite Coxeter groups, Yu proved that $PC_W$ embeds into a right-angled Coxeter group and that $C_W$ is linear \cite{Yu23}. Bellingeri, Godelle, and Paris realized generalized cactus groups as trickle groups \cite{BGPTrickle}, while Genevois realized them as quandle products of copies of $\mathbb Z/2\mathbb Z$ \cite{GenevoisQuandle}. The latter description implies that their canonical Cayley graphs are median and hence gives canonical actions on CAT(0) cube complexes.

For a Coxeter diagram $X$, let $W(X)$ denote the Coxeter group defined by $X$. We write $C_{W(X)}$ for the associated generalized cactus group. In type $A_{n-1}$, identify $W(A_{n-1})$ with $S_n$ using the generating set
\[
 S=\{s_1,\ldots,s_{n-1}\},
 \qquad s_i=(i\ i+1).
\]
The correspondence
\[
 \sigma_{\{s_p,\ldots,s_{q-1}\}}
 \longleftrightarrow s_{p,q}
\]
identifies $C_{W(A_{n-1})}$ with $J_n$ and $g_W$ with $\Sigma_n$.

In joint work with Jan Kim \cite{KimKimRank}, we use cubical geometry to determine the maximal ranks of free abelian subgroups of generalized cactus groups associated with finite Coxeter systems. The finite subgroup problem is also closely related to questions about these cubical actions.

Further motivation comes from quandle products. Genevois asks for the algebraic structure of prism stabilizers and for a description of finite subgroups in quandle products \cite[Question~9.8 and Problem~9.9]{GenevoisQuandle}. Here, a prism is a subgraph that decomposes as a Cartesian product of maximal complete subgraphs. For generalized cactus groups, the prisms in the canonical Cayley graph are precisely the one-skeleta of cubes in the corresponding CAT(0) cube complex \cite[Theorem~4.12, Lemma~4.13, and Example~8.12]{GenevoisQuandle}. Thus, prism stabilizers coincide with cube stabilizers for these groups. Our proof gives a criterion for these stabilizers and uses it to classify the finite groups that embed into $C_W$.

To state the classification for generalized cactus groups, we introduce some further notation. The group $\mathcal W_a$ defined above is isomorphic to a Sylow $2$-subgroup of $S_{2^a}$ (Lemma~\ref{lem:sylow}). In particular,
\[
 \mathcal W_1\cong\mathbb Z/2\mathbb Z,
 \qquad
 \mathcal W_2\cong D_8,
\]
where $D_8$ denotes the dihedral group of order eight. For $m\geq0$, define
\begin{equation}\label{eq:Bfamily}
 \mathscr B_m=
 \left\{
  \prod_{j=1}^k
  (\mathbb Z/2\mathbb Z\times\mathcal W_{a_j}):
  k\geq0,\ a_j\geq0,\
  \sum_{j=1}^k2^{a_j}=m
 \right\}.
\end{equation}
Repetitions among the $a_j$ are allowed. An empty product is the trivial group, so $\mathscr B_0=\{1\}$.

For each finite irreducible Coxeter type $X$, let $\mathscr{L}(X)$ be the list in Table~\ref{tab:classification}. If $\mathscr{H}$ is a list of groups, write
\[
 G\times\mathscr{H}
 =\{G\times H:H\in\mathscr{H}\}.
\]

\begin{table}[htbp]
\centering\small
\renewcommand{\arraystretch}{1.3}
\begin{tabular}{@{}ll@{}}
\toprule
Irreducible type $X$ & List $\mathscr{L}(X)$\\
\midrule
$A_n$ ($n\geq1$)
 & $\{P_{n+1}\}$\\
$B_n$ ($n\geq2$)
 & $\mathscr{B}_n$\\
$D_n$ ($n\geq4$)
 & $\mathscr{B}_{2\lfloor n/2\rfloor}
    \ \cup\
    (\mathcal{W}_2\times\mathscr{B}_{n-3})$\\
$E_6$
 & $\{\mathcal{W}_2
       \times(\mathbb{Z}/2\mathbb{Z})^2\}$\\
$E_7$
 & $\{(\mathbb{Z}/2\mathbb{Z})^7,\
       \mathcal{W}_2
       \times(\mathbb{Z}/2\mathbb{Z})^4\}$\\
$E_8$
 & $\{(\mathbb{Z}/2\mathbb{Z})^8,\
       \mathcal{W}_2
       \times(\mathbb{Z}/2\mathbb{Z})^5,\
       \mathcal{W}_2^2
       \times(\mathbb{Z}/2\mathbb{Z})^2,\
       \mathcal{W}_3
       \times\mathbb{Z}/2\mathbb{Z}\}$\\
$F_4$
 & $\{(\mathbb{Z}/2\mathbb{Z})^4\}$\\
$H_3$
 & $\{(\mathbb{Z}/2\mathbb{Z})^3\}$\\
$H_4$
 & $\{(\mathbb{Z}/2\mathbb{Z})^4,\
       \mathcal{W}_2
       \times\mathbb{Z}/2\mathbb{Z}\}$\\
$I_2(p)$, $p\geq5$ odd
 & $\{\mathbb{Z}/2\mathbb{Z}\}$\\
$I_2(p)$, $p\geq6$ even
 & $\{(\mathbb{Z}/2\mathbb{Z})^2\}$\\
\bottomrule
\end{tabular}
\par\vspace{6pt}
\caption{Groups used in the classification for
finite irreducible Coxeter systems.}
\label{tab:classification}
\end{table}

For a subset $T\subseteq S$ such that $W_T$ is finite, let $X_1(T),\ldots,X_{r(T)}(T)$ be the types of the irreducible components of $(W_T,T)$, and define
\[
 \mathscr{L}(T)=
 \left\{
  H_1\times\cdots\times H_{r(T)}:
  H_i\in\mathscr{L}(X_i(T))
 \right\},
 \qquad
 \mathscr{L}(\emptyset)=\{1\}.
\]

\begin{maintheoremB}\label{thm:mainB}
Let $(W,S)$ be any Coxeter system and let $F$ be a finite group. The following are equivalent:
\begin{enumerate}
 \item $F$ embeds into $C_W$;
 \item $F$ embeds into $C_{W_T}$ for some        $T\subseteq S$ such that $W_T$ is finite;
 \item $F$ embeds into some $H\in\mathscr{L}(T)$        for some $T\subseteq S$ such that $W_T$        is finite.
\end{enumerate}
\end{maintheoremB}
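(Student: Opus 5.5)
We will prove the cycle (1)$\Rightarrow$(2)$\Rightarrow$(3)$\Rightarrow$(1), using the median/CAT(0) cube complex structure of the canonical Cayley graph of $C_W$, which comes from Genevois's description of $C_W$ as a quandle product.

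\textbf{Step (1)$\Rightarrow$(2).} Let $X$ be the cube complex obtained by filling in cubes in the Cayley graph of $C_W$ with respect to $\{\sigma_I\}$. A finite group $F\le C_W$ acting on the CAT(0) cube complex $X$ fixes a point, hence stabilizes a cube. Since the action on vertices is free, $F$ embeds into the stabilizer of that cube. Translating by a group element, we may assume the cube contains the identity vertex. Prisms are products of maximal cliques, and the maximal clique through $1$ is $\{1,\sigma_I\}$. So a cube at $1$ corresponds to a set $\mathcal K\subseteq\mathcal I(S)$ of pairwise commuting-type generators, i.e.\ the pairs $I,J$ are either nested or orthogonal ($m_{st}=2$). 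Every element of the cube is a product of the $\sigma_I$ with $I\in\mathcal K$, and every element of its stabilizer maps the vertex $1$ to such a product. Hence the stabilizer lies in $\langle\sigma_I: I\in\mathcal K\rangle$. The maximal elements of $\mathcal K$ are pairwise orthogonal and each $W_I$ is finite, so their union $T$ has $W_T$ finite. Every $I\in\mathcal K$ lies in $T$.

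It remains to see that $\langle\sigma_I : I\subseteq T\rangle$ is the image of $C_{W_T}$ and that the natural map $C_{W_T}\to C_W$ is injective. For injectivity we will use the fact that the Cayley graph of $C_{W_T}$ embeds as a convex subcomplex of $X$. Alternatively, we can use a retraction $C_W\to C_{W_T}$ sending $\sigma_I\mapsto\sigma_I$ if $I\subseteq T$ and $\sigma_I\mapsto 1$ otherwise. We must check that the relations are preserved. For the relation $\sigma_J\sigma_I\sigma_J=\sigma_{\omega_J I\omega_J}$ with $I\subsetneq J$, the only problematic case is $J\not\subseteq T$ with $I\subseteq T$ but $\omega_JI\omega_J\not\subseteq T$, and this case can occur. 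So we expect to use the convexity or parabolic-subgroup argument instead of the retraction. This is standard for quandle products over a subquandle closed under the action, since $\{\sigma_I:I\subseteq T\}$ is closed under conjugation by itself.

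\textbf{Step (2)$\Rightarrow$(3).} First, $C_{W_T}$ for a reducible $T$ is the direct product of the cactus groups of its components, by the commutation relations. A finite subgroup of such a product projects into each factor, so $F$ embeds into the product of its projections. This reduces the problem to irreducible finite types, where we apply the same cube-stabilizer argument. Every finite subgroup lies in the stabilizer of a maximal cube at $1$, i.e.\ of a maximal nested/orthogonal family $\mathcal K$. We must show that these stabilizers are exactly the groups in $\mathscr L(X)$, or embed into them.

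This is the main obstacle: the type-by-type analysis of the maximal compatible families.
\begin{itemize}
\item Type $A$ reduces to Theorem~\ref{thm:mainA}.
\item Type $B_n$ requires combinatorics of nested families of intervals containing or avoiding the special node. Here the longest element is central, which produces the extra factors $\mathbb Z/2\mathbb Z$.
\item Types $D$, $E$, $F$, $H$ require a finite enumeration of the possible families.
\item Type $I_2(p)$ is handled directly from the presentation.
\end{itemize}
In each case the stabilizer is an iterated semidirect product: $\sigma_J$ permutes the subfamily below $J$ via the diagram automorphism $I\mapsto\omega_JI\omega_J$, and this yields the wreath-type groups $\mathcal W_a$.

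\textbf{Step (3)$\Rightarrow$(1).} This needs explicit realizations. For each listed $H\in\mathscr L(X)$, we exhibit a compatible family whose cube stabilizer is $H$. Taking direct products over components and using the injectivity of $C_{W_T}\to C_W$ from the first step then embeds $H$, and hence $F$, into $C_W$.
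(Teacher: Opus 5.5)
Your step (1)$\Rightarrow$(2) is the paper's argument for Theorem~\ref{thm:localization}. Your passage from reducible $T$ to its components is Proposition~\ref{prop:product}, and your appeal to a subquandle closed under the nesting relations is how Proposition~\ref{prop:parabolic} proves injectivity of $C_{W_T}\to C_W$. One repair is needed even here. If $S$ is infinite, $X$ can be infinite-dimensional and then fails to be complete, so the Bruhat--Tits theorem does not apply directly. The paper first places $F$ in $C_U$, where $U$ is the finite union of the sets $I$ with $\sigma_I$ occurring in words for generators of $F$. There $\mathscr A_{W_U}$ is locally finite and finite-dimensional, hence complete.

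The genuine gap is that (2)$\Rightarrow$(3) and (3)$\Rightarrow$(1) are not proved, and they carry the whole content of the statement, namely that the lists $\mathscr L(X)$ in Table~\ref{tab:classification} are correct. You only name the types and call the analysis the main obstacle. In the paper these implications are exactly Theorems~\ref{thm:typeA}, \ref{thm:typeB}, \ref{thm:typeD}, \ref{thm:typeE}, \ref{thm:FH} and Proposition~\ref{prop:dihedral}, plus padding with factors $B_0$ to match the $D_n$ row. Moreover, the strategy you sketch would not work as stated. First, it is false that every finite subgroup lies, even up to conjugacy, in the stabilizer of a maximal cube. In $J_3=C_{W(A_2)}=\langle\sigma_{\{s_1\}},\sigma_S\rangle\cong D_\infty$ the maximal compatible families are $\{\{s_1\},S\}$ and $\{\{s_2\},S\}$, with stabilizers $\langle\sigma_{\{s_1\}}\rangle$ and $\langle\sigma_{\{s_2\}}\rangle$. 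The subgroup $\langle\sigma_S\rangle$ is conjugate into neither, so all compatible families must be treated. Second, the same example shows that $\sigma_J$ with $J\in\mathcal A$ need not stabilize $Q_{\mathcal A}$. So stabilizers are not generated by the $\sigma_J$ acting on the subfamilies below them. They consist of the vertices $v_{\mathcal U}$ whose images $w_{\mathcal U}$ permute $\{W_I:I\in\mathcal A\}$. The idea you are missing is Lemma~\ref{lem:pure} together with Theorem~\ref{thm:criterion}. Since $PC_W$ is torsion-free, $g_W$ identifies each $K_{\mathcal A}$ with this subgroup of $W$. The upper bounds are then proved inside $W$ (using signed permutations in types $B$ and $D$), with Lemmas~\ref{lem:maxsupport} and~\ref{lem:central}, and Lemma~\ref{lem:oppositionextension} for $E_6$. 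Finally, (3)$\Rightarrow$(1) requires explicit realizations, which the proposal only promises: the dyadic subgroups of Lemma~\ref{lem:dyadic}, the commuting prefix elements in types $B$ and $D$, and the families in Table~\ref{tab:opposition}.
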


Neither $W$ nor $S$ is required to be finite. If $W$ is finite, one may take $T=S$. In this case, a finite group embeds into $C_W$ if and only if it embeds into a product of one group from the corresponding row of Table~\ref{tab:classification} for each irreducible component of $(W,S)$. When $S$ is finite, we also determine the largest order and all possible orders of finite subgroups; see Corollary~\ref{cor:maximum}.

The finite subgroup structure of $C_W$ depends on the Coxeter type. Every finite subgroup of $C_W$ is a $2$-group and embeds into $W$, but these necessary conditions do not determine which finite groups embed into $C_W$. $W(B_2)$ is already a group of order eight, whereas every finite subgroup of $C_{W(B_2)}$ has order at most four. The description in type $A$ using a single finite group also fails to extend to all Coxeter types. In types $E_7$ and $E_8$, there is no finite group $G$ such that the finite groups that embed into $C_W$ are exactly those that embed into $G$.

The proof begins with the action of $C_W$ on the CAT(0) cube complex obtained as the cube-completion of its canonical Cayley graph. The Bruhat--Tits fixed point theorem implies that every finite subgroup stabilizes a cube. When $S$ is infinite, a finite subgroup uses only finitely many generators and hence lies in $C_{W_U}$ for some finite $U\subseteq S$, so the theorem applies to the corresponding complete CAT(0) cube complex. After translating the stabilized cube to contain the identity, its edges at the identity correspond to finitely many connected subsets $I\subseteq S$ with $W_I$ finite. Any two of these subsets are contained in one another or are disjoint with every generator in one commuting with every generator in the other. The maximal members are pairwise disjoint, and generators belonging to distinct maximal members commute. Their union $T$ therefore satisfies $W_T$ finite. Every vertex of the cube is a product of generators $\sigma_I$ with $I\subseteq T$, and therefore belongs to $C_{W_T}$. Since an element stabilizing the cube sends the identity to one of these vertices, the entire stabilizer is contained in $C_{W_T}$. This gives a conjugate of the original finite subgroup contained in $C_{W_T}$.

We also prove that $PC_W$ is torsion-free (Lemma~\ref{lem:pure}). Consequently, the restriction of $g_W$ to every finite subgroup of $C_W$ is injective. We can therefore study a cube stabilizer as a subgroup of $W$. By Theorem~\ref{thm:criterion}, a vertex $v$ stabilizes the cube exactly when conjugation by $g_W(v)$ preserves the collection of subgroups $W_I$ corresponding to its edges at the identity. In type $A$, we embed finite subgroups of $J_n$ into $P_n$ and explicitly realize $P_n$ in $J_n$. This requires no analysis of cube stabilizers. For type $I_2(p)$, we use the defining presentation directly. For the other types, we analyze cube stabilizers to obtain the groups in Table~\ref{tab:classification}. We also construct every group in that table as a subgroup of the corresponding generalized cactus group. Products of Coxeter systems and the preceding reduction to finite Coxeter groups complete the proof.

A manuscript containing our results for ordinary cactus groups was communicated to Genevois in July 2026. Marashdeh's manuscript was completed in early September 2026. Our extension to generalized cactus groups was developed after the July version. Marashdeh’s paper also computes the cube stabilizers explicitly in terms of laminar families of intervals and determines which of them are conjugate to subgroups generated by generators, studies maximal finite subgroups, and identifies conjugacy classes of finite subgroups with connected components of fixed loci on the quotient cube complex.

For ordinary cactus groups, we also give an exact criterion using products of iterated wreath products with a recursively defined weight; see Section~\ref{sec:wreath}. We give a corresponding criterion in type $B$. Marashdeh independently disproved Genevois's conjecture at $n=10$ when all parameters are required to be at least two \cite[Corollary~1.4(b)]{Marashdeh26}. Our example at $n=20$ shows that $P_{20}$ is not isomorphic to any single iterated wreath product of the proposed form, even when parameters equal to one are allowed. Thus, the conjecture cannot be corrected merely by changing its numerical bound.

The paper is organized as follows. In Section~\ref{sec:common}, we describe cubes and their stabilizers, prove the reduction to spherical subsets, and discuss products of Coxeter systems. Section~\ref{sec:typeA} proves Theorem~\ref{thm:mainA}. Sections~\ref{sec:typeB} and~\ref{sec:typeD} classify the finite groups that embed into generalized cactus groups of types $B$ and $D$, respectively. In Section~\ref{sec:typeE}, we analyze cube stabilizers and construct the required finite subgroups in types $E_6$, $E_7$, and $E_8$. Section~\ref{sec:small} covers types $F_4$, $H_3$, $H_4$, and $I_2(p)$. Section~\ref{sec:consequences} proves Theorem~\ref{thm:mainB} and determines all possible orders of finite subgroups when $S$ is finite. Finally, Section~\ref{sec:wreath} gives weighted embedding criteria for types $A$ and $B$ and concludes with counterexamples to Genevois's conjecture.

\vspace{0.6cm} 

\noindent \textbf{Acknowledgment.} 
The author thanks Anthony Genevois for reading an earlier draft on finite subgroups of ordinary cactus groups. The author also thanks Mohammad F. Marashdeh for helpful discussions and for his encouragement during the preparation of this paper. The author was supported by Israel Science Foundation Grant 3423/24 (PI Ilya Gekhtman).

\vspace{0.6cm} 

\noindent \textbf{AI disclosure.} 
The authors used ChatGPT (OpenAI) to assist with language editing, improve the presentation of the manuscript, and review mathematical arguments. All AI-assisted content was reviewed, edited, and verified by the authors, who take full responsibility for the integrity of the work.

\section{Preliminaries and cube stabilizers}\label{sec:common}
\subsection{Coxeter groups}
\label{subsec:basic}

Throughout this section, $(W,S)$ is an arbitrary Coxeter system unless stated otherwise. We use the notation for generalized cactus groups introduced in Section~\ref{sec:introduction}.

Recall that a \emph{Coxeter system} is a group $W$ with a generating set $S$ and a presentation
\[
 W=
 \left\langle
 S\ \middle|\
 (st)^{m_{st}}=1
 \text{ whenever }m_{st}<\infty
 \right\rangle,
\]
where $m_{ss}=1$ for every $s\in S$, and $m_{st}=m_{ts}\in\{2,3,\ldots,\infty\}$ for distinct $s,t\in S$. The Coxeter diagram has vertex set $S$, with an edge between distinct $s,t$ whenever $m_{st}\neq2$. We label each edge by $m_{st}$, omitting the label when $m_{st}=3$. Connectedness of a subset of $S$ refers to its induced subdiagram.

The \emph{length} $\ell(w)$ of $w\in W$ is the minimum length of a word in $S$ representing $w$. An expression of this length is called a \emph{reduced expression}. The \emph{rank} of $(W,S)$ is $|S|$. The system is called \emph{finite} if $W$ is finite and \emph{irreducible} if its Coxeter diagram is connected.

For $T\subseteq S$, the subgroup $W_T$ is called a \emph{standard parabolic subgroup}. The pair $(W_T,T)$ is itself a Coxeter system. A subset $T\subseteq S$ is called \emph{spherical} if $W_T$ is finite. Two subsets are called \emph{nested} if one is contained in the other. For $I,J\in\mathcal{I}(S)$, write $I\perp J$ if $I$ and $J$ are disjoint and every generator in $I$ commutes with every generator in $J$. We then say that $I$ and $J$ are \emph{orthogonal}. Equivalently, $m_{st}=2$ for every $s\in I$ and $t\in J$.

For $J\in\mathcal{I}(S)$, conjugation by the longest element $\omega_J$ preserves $J$ and induces an involutive automorphism of its Coxeter diagram, called \emph{opposition}. For $I\subseteq J$, write
\[
 \omega_J(I)
 =\omega_J I\omega_J^{-1}
 =\{\omega_J s\omega_J^{-1}:s\in I\}.
\]
If $I\in\mathcal{I}(S)$, then $\omega_J(I)\in\mathcal{I}(S)$. We refer to \cite{BjornerBrenti} for background on Coxeter groups.

We use the classification of finite irreducible Coxeter systems given in \cite[Appendix~A1, Table~I]{BjornerBrenti}. Their types are
\[
 \begin{gathered}
  A_n\ (n\geq1),\qquad
  B_n\ (n\geq2),\qquad
  D_n\ (n\geq4),\\
  E_6,\ E_7,\ E_8,\ F_4,\ H_3,\ H_4,\qquad
  I_2(p)\ (p\geq5).
 \end{gathered}
\]
Every finite Coxeter system is a finite product of irreducible Coxeter systems of these types.

\subsection{Cube complexes}\label{subsec:basic2}

A \emph{cube complex} is obtained by gluing Euclidean unit cubes $[0,1]^d$ along faces by isometries. We equip it with the induced length metric. Write $V(X)$ for the vertex set of a cube complex $X$ and $X^{(1)}$ for its graph of vertices and edges, called its \emph{one-skeleton}.

For a vertex $v$ of a cube complex, its \emph{link} is the simplicial complex whose vertices correspond to the edges incident to $v$. A collection of these vertices spans a simplex precisely when the corresponding edges lie in a common cube. A simplicial complex is \emph{flag} if every finite collection of pairwise adjacent vertices spans a simplex.

A cube complex is called \emph{non-positively curved} if the link of every vertex is flag. It is called \emph{CAT(0)} if it is also simply connected. This characterization agrees with the CAT(0) condition for the induced length metric; see \cite[Theorem~B.8]{Leary13}. In particular, a finite collection of edges incident to a vertex in a CAT(0) cube complex lies in a common cube whenever every pair lies in a common square.

A connected graph $X$ is \emph{median} if, for any $x_1,x_2,x_3\in V(X)$, there exists a unique vertex $m\in V(X)$ such that
\[
 d(x_i,x_j)=d(x_i,m)+d(m,x_j)
 \qquad(1\leq i<j\leq3),
\]
where $d$ is the graph metric. The \emph{cube-completion} of $X$ is the cube complex obtained by filling each subgraph isomorphic to the one-skeleton of a cube with a cube. A connected graph is median if and only if its cube-completion is a CAT(0) cube complex \cite{Chepoi00}.

Let $X$ be a CAT(0) cube complex. For two edges $e,f$ of $X$, write $e\sim f$ if there exists a finite sequence $e=e_0,\ldots,e_k=f$, with $k\geq0$, such that for each $0\leq i<k$, the edges $e_i$ and $e_{i+1}$ lie in the boundary of a common square and have no common endpoint. A \emph{midcube} of a cube identified with $[0,1]^d$ is a subset of the form
\[
 \{(t_1,\ldots,t_d)\in[0,1]^d:t_j=1/2\},
 \qquad 1\leq j\leq d.
\]
For an equivalence class $\mathcal E$ of edges, let $H_{\mathcal E}\subseteq X$ be the union of all midcubes of cubes of $X$ that contain the midpoint of an edge in $\mathcal E$. We call $H_{\mathcal E}$ the \emph{hyperplane} associated with $\mathcal E$. The edges in $\mathcal E$ are said to be \emph{dual} to $H_{\mathcal E}$. For every hyperplane $H$, the complement $X\setminus H$ has exactly two connected components; see \cite[Section~6.2]{Chepoi00}. We say that $H$ \emph{separates} two vertices if they lie in different components of $X\setminus H$.

All group actions on cube complexes considered below are by \emph{cubical automorphisms}, which send each cube isometrically onto a cube. For a group $G$ acting on $X$ and a cube $Q\subseteq X$, write
\[
 \operatorname{Stab}_G(Q)
 =\{g\in G:gQ=Q\}.
\]
An element of this subgroup may permute the vertices of $Q$.

We will use the following consequence of the Bruhat--Tits fixed point theorem. If a finite group acts by cubical automorphisms on a complete CAT(0) cube complex, then it stabilizes a cube. Indeed, the fixed point theorem gives a fixed point \cite[Corollary~II.2.8]{BridsonHaefliger99}. The unique cube whose relative interior contains that point is then preserved by the group.

\subsection{Compatible families and cubes}
The subgroup $PC_W$ has finite index in $C_W$ exactly when $W$ is finite. A finite family $\mathcal{A}\subseteq\mathcal{I}(S)$ is called \emph{compatible} if every pair of distinct members is nested or orthogonal.

The Cayley graph $\mathscr{C}_W$ of $C_W$ with respect to the generating set $\{\sigma_{I} : I\in\mathcal{I}(S)\}$ is median, as follows from the quandle-product description of $C_W$ and \cite[Theorem~4.12 and Lemma~4.13]{GenevoisQuandle}. Its cube-completion, denoted by $\mathscr{A}_W$, is therefore a CAT(0) cube complex. Left multiplication induces a cubical action of $C_W$ on $\mathscr{A}_W$ that is free and transitive on vertices. If $S$ is finite, then $\mathcal{I}(S)$ is finite, so $\mathscr{A}_W$ is locally finite and finite-dimensional.

\begin{lemma}\label{lem:compatible-cubes}
Cubes of $\mathscr{A}_W$ containing the identity are in bijection with compatible families $\mathcal{A}$. Any square in $\mathscr{A}_W$ is a left translate of a defining square associated to a commuting relation or a nesting relation in \eqref{eq:presentation}.
\end{lemma}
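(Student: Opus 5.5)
Cubes of $\mathscr{A}_W$ are, by definition of the cube-completion, the subgraphs of $\mathscr{C}_W$ isomorphic to one-skeleta of cubes. So the plan is to identify the edges at the identity, show that pairwise square-compatibility of these edges is exactly compatibility of the families, and then use the flag property to pass from squares to cubes. Since $\mathscr{A}_W$ is CAT(0), a cube containing the identity is determined by its edges at the identity. These edges are $\{1,\sigma_I\}$ for certain $I\in\mathcal{I}(S)$. Conversely, by the remark in Subsection~\ref{subsec:basic2}, a finite set of such edges spans a cube if and only if every pair spans a square. The cube corresponding to $\mathcal{A}$ therefore has vertex set $\{\sigma_{I_1}\cdots\sigma_{I_k}\}$ for suitable sub-collections.

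\textbf{Step 1: every relation of \eqref{eq:presentation} gives a square.} For orthogonal $I\perp J$, the vertices $1,\sigma_I,\sigma_J,\sigma_I\sigma_J=\sigma_J\sigma_I$ form a $4$-cycle. For $I\subsetneq J$, set $I'=\omega_J(I)$. The relation $\sigma_J\sigma_I=\sigma_{I'}\sigma_J$ gives the $4$-cycle $1,\sigma_J,\sigma_J\sigma_I=\sigma_{I'}\sigma_J,\sigma_{I'}$. This cycle contains the edges $\{1,\sigma_J\}$ and $\{1,\sigma_{I'}\}$. To get a square at the identity containing the edges $\{1,\sigma_I\}$ and $\{1,\sigma_J\}$, apply the same relation with $I'$ in place of $I$, using that $\omega_J$ is an involution on $J$. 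In each case I must check that the $4$-cycle is induced and has four distinct vertices. Distinctness follows by applying $g_W$: $\omega_I$, $\omega_J$ and $\omega_I\omega_J$ are nontrivial and distinct in $W$ because $I\neq J$ and $\omega_I\neq\omega_J$. Hence every compatible pair spans a square at $1$, and by the flag condition every compatible family spans a cube.

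\textbf{Step 2 (main obstacle): squares come only from relations.} Suppose $1,\sigma_I,\sigma_I\sigma_K=\sigma_J\sigma_L,\sigma_J$ is a $4$-cycle in $\mathscr{C}_W$. I must show that $\{I,J\}$ is nested or orthogonal, and that the square is one of those in Step 1. My first approach is to use the quandle-product normal form of \cite{GenevoisQuandle}. There, a square in the Cayley graph arises exactly from a relation $x\cdot y=(y\triangleright x)\cdot y$ with $x,y$ commuting in the quandle sense. In our case these are precisely the commuting and nesting relations, since the quandle operation is $\sigma_J\triangleright\sigma_I=\sigma_{\omega_J(I)}$ when $I\subsetneq J$ and is trivial when $I\perp J$. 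Alternatively, I can argue in $W$. Applying $g_W$ gives $\omega_I\omega_K=\omega_J\omega_L$, and I would argue that the distance-$2$ geodesic structure in the median graph forces the two geodesics to correspond to a single defining relation. This is the step where I would lean on \cite[Theorem~4.12, Lemma~4.13, Example~8.12]{GenevoisQuandle} rather than reprove it, and I expect it to be the delicate point.

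\textbf{Step 3: conclude.} Given a cube at $1$, its edge labels pairwise span squares, so by Step 2 they form a compatible family. Given a compatible family, Step 1 produces a cube. Distinct families give distinct edge sets at $1$, hence distinct cubes, which yields the bijection. The second assertion of the lemma is Step 2 transported to an arbitrary square by left translation, since the action of $C_W$ is vertex-transitive.
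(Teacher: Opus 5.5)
Your proposal is correct and follows essentially the same route as the paper. The paper also takes the description of squares from Genevois's quandle-product work, specifically \cite[Lemma~4.15]{GenevoisQuandle}, reads off that $(1,\sigma_I)$ and $(1,\sigma_J)$ span a square exactly when $I$ and $J$ are nested or orthogonal, and then uses flagness of the link at the identity to pass to cubes; your Step~1 only makes explicit the easy direction and the distinctness check via $g_W$, and the sketched alternative argument in $W$ in Step~2 is unnecessary once you rely on that citation.
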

\begin{proof}
The description of squares is contained in \cite[Lemma~4.15]{GenevoisQuandle}. For distinct $I,J\in\mathcal{I}(S)$, the edges $(1,\sigma_I)$ and $(1,\sigma_J)$ lie in a common square if and only if $I$ and $J$ are nested or orthogonal. Since the link at the identity is flag, a finite family $\mathcal A\subseteq\mathcal{I}(S)$ determines a unique cube whose edges incident to the identity are precisely $(1,\sigma_I)$ for $I\in\mathcal{A}$ if and only if every pair of distinct members of $\mathcal{A}$ is nested or orthogonal. This is precisely the compatibility condition.
\end{proof}

For each compatible family $\mathcal{A}$, we denote the corresponding cube containing the identity by $Q_{\mathcal{A}}$.

\begin{proposition}\label{prop:parabolic}
For every $T\subseteq S$, the natural map $C_{W_T}\to C_W$ is injective. Identifying $C_{W_T}$ with its image, its canonical Cayley graph is the subgraph of $\mathscr{C}_W$ with vertex set $C_{W_T}$ and edges $(h,h\sigma_I)$, where $h\in C_{W_T}$ and $I\in\mathcal{I}(S)$ satisfies $I\subseteq T$. This inclusion extends to a cubical embedding $\mathscr{A}_{W_T}\hookrightarrow\mathscr{A}_W$.
\end{proposition}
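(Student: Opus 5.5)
The plan is to construct a retraction $\rho\colon C_W\to C_{W_T}$ and then use the median structure of $\mathscr{C}_W$ to identify the Cayley graphs.

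\textbf{Step 1: the natural map.} The map $\iota\colon C_{W_T}\to C_W$, $\sigma_I\mapsto\sigma_I$, is a homomorphism because every defining relation of $C_{W_T}$ is a defining relation of $C_W$. Here we use that $\mathcal I(T)=\{I\in\mathcal I(S):I\subseteq T\}$, that orthogonality is computed inside $T$, and that $\omega_J(I)\subseteq J\subseteq T$.

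\textbf{Step 2: the retraction.} Define $\rho$ on generators by
\[
 \rho(\sigma_I)=\sigma_I \text{ if } I\subseteq T,
 \qquad
 \rho(\sigma_I)=1 \text{ otherwise}.
\]
We check that each defining relation of \eqref{eq:presentation} is sent to a relation of $C_{W_T}$.
\begin{enumerate}
 \item Relations $\sigma_I^2=1$ and commutation relations map to trivial relations or to relations of $C_{W_T}$.
 \item For a nesting relation $\sigma_J\sigma_I\sigma_J=\sigma_{\omega_J(I)}$ with $I\subsetneq J$, there are three cases.
 \begin{enumerate}
  \item If $J\subseteq T$, then $I\subseteq T$ and the relation already holds in $C_{W_T}$.
  \item If $J\not\subseteq T$ and $I\subseteq T$, the image reads $\sigma_I=\sigma_{\omega_J(I)}$. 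This may fail, since $\omega_J(I)$ need not equal $I$ or lie in $T$.
  \item If $J\not\subseteq T$ and $I\not\subseteq T$, the image is $1=\rho(\sigma_{\omega_J(I)})$. This may also fail, because $\omega_J(I)$ can lie in $T$ even though $I$ does not.
 \end{enumerate}
\end{enumerate}
So a naive killing map is not well defined. This is the main obstacle, and I would not rely on it.

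\textbf{Step 3: a geometric replacement.} Since $\mathscr C_W$ is median (and $\mathscr A_W$ is CAT(0)), I would let $Y$ be the subgraph of $\mathscr C_W$ spanned by $\iota(C_{W_T})$ using edges labelled by $I\subseteq T$. I would then show that $Y$ is convex. The check is local, using Lemma~\ref{lem:compatible-cubes}. Squares of $\mathscr C_W$ come from commuting or nesting relations. If two edges at a vertex of $Y$ carry labels $I,J\subseteq T$, then the opposite edges of their square carry labels $I$, $J$, or $\omega_J(I)$, all contained in $T$. Hence $Y$ is locally convex: every square with two adjacent edges in $Y$ lies in $Y$. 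It is also connected. A connected, locally convex subgraph of a median graph whose cube completion is simply connected is convex. This is the standard local-to-global convexity for CAT(0) cube complexes; I would need to check simple connectivity of the cube completion $Y'$ of $Y$.

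For that, I would compare $Y$ with the Cayley graph of $C_{W_T}$. The latter is median by \cite{GenevoisQuandle} applied to $(W_T,T)$, and $\iota$ induces a graph map $\mathscr C_{W_T}\to Y$ that is surjective and locally injective on links (same labels). Its cube completion is a local isometry of nonpositively curved cube complexes from $\mathscr A_{W_T}$, which is CAT(0). Such a local isometry is $\pi_1$-injective and lifts to an isometric embedding into the universal cover. Since both complexes have a single vertex orbit, it is injective. This gives injectivity of $\iota$ and identifies $\mathscr C_{W_T}$ with $Y$, whose edges are exactly $(h,h\sigma_I)$ with $I\subseteq T$.

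\textbf{Step 4: the cubical embedding.} Cubes map to cubes because compatible families in $\mathcal I(T)$ are compatible in $\mathcal I(S)$. Thus the cubical embedding $\mathscr A_{W_T}\hookrightarrow\mathscr A_W$ follows.

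The delicate point is Step 3, namely that a local isometry of cube complexes with CAT(0) source into a CAT(0) target is injective.
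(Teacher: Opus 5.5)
Your proposal is correct, but it proves injectivity by a different mechanism than the paper.

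\textbf{The paper's route.} The paper argues algebraically. Since $\omega_J(I)\subseteq J\subseteq T$ whenever $I\subsetneq J\subseteq T$, the set $\mathcal I(T)$ is closed under the opposition maps in the nesting relations. That is, it is a stable set of generators of the quandle product in Genevois's sense. Genevois's Corollary~3.7 then says that such a set generates a subgroup with exactly the induced presentation. Injectivity and the Cayley graph description follow at once, and the cubical embedding follows as in your Step~4.

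\textbf{Your route.} You apply Lemma~\ref{lem:compatible-cubes} to both $(W_T,T)$ and $(W,S)$ to see that $\iota$ induces a cubical map $\mathscr A_{W_T}\to\mathscr A_W$ with two properties. It is injective on links, because labels are preserved and distinct edges at a vertex carry distinct labels. It also has full image on links, because two edges at a vertex span a square exactly when their labels are nested or orthogonal, a condition intrinsic to the pair. You should state this fullness explicitly. It is what makes the map a local isometry rather than just a local injection. Your Step~4 observation that cubes go to cubes must also come before Step~3, since the map is not cubical without it.

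The Haglund--Wise local-to-global principle then shows that a local isometry between CAT(0) cube complexes is an embedding onto a convex subcomplex. The relevant reason is that both complexes are simply connected, so the map is its own lift to universal covers. Your remark about a single vertex orbit plays no role. Your preliminary attempt to prove convexity of $Y$ directly is also superfluous, since convexity of the image is part of the same conclusion.

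\textbf{Infinite $S$.} When $S$ is infinite, $\mathscr A_W$ need not be finite-dimensional. If you use the metric form of the principle, first reduce to finite $S$. A word in the $\sigma_I$ with $I\subseteq T$ that is trivial in $C_W$ is already trivial in $C_{W_U}$ for some finite $U\subseteq S$, since only finitely many relators are used. Injectivity of $C_{W_{T\cap U}}\to C_{W_U}$ then shows that the word is trivial in $C_{W_T}$.

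\textbf{Comparison.} The paper's route is a one-line citation tied to the quandle-product formalism. Yours needs only the medianness of the two Cayley graphs, which the paper uses anyway, plus a standard cubical fact. In return, it shows that $\mathscr A_{W_T}$ sits in $\mathscr A_W$ as a convex subcomplex.
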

\begin{proof}
Let $\mathcal{I}(T)=\{I\in\mathcal{I}(S):I\subseteq T\}$. For $I,J\in\mathcal{I}(T)$ with $I\subsetneq J$, conjugation by $\omega_J$ is a diagram automorphism of $J$. Hence, $\omega_JI\omega_J^{-1}$ is a connected spherical subset of $J\subseteq T$, and therefore belongs to $\mathcal{I}(T)$. This implies that $\mathcal{I}(T)$ is closed under the opposition maps appearing in the nesting relations, which means that it is stable in the sense of \cite[Definition~2.14]{GenevoisQuandle}.

By \cite[Corollary~3.7]{GenevoisQuandle}, the subgroup generated by $\sigma_I$ with $I\in\mathcal{I}(T)$ has a presentation consisting of these generators and the corresponding relations in \eqref{eq:presentation}. This is precisely the presentation of $C_{W_T}$, proving the injectivity of the natural map. Under this identification, the image of its canonical Cayley graph has vertex set $C_{W_T}$ and edges $(h,h\sigma_I)$ with $h\in C_{W_T}$ and $I\in\mathcal{I}(T)$, as asserted.

To obtain the cubical embedding, consider a cube containing the identity in $\mathscr{A}_{W_T}$, whose edges at the identity are $(1,\sigma_I)$ for $I$ in a compatible family $\mathcal{A}$. Inclusion and orthogonality among members of $\mathcal{I}(T)$ are unchanged in $\mathcal{I}(S)$. Thus, Lemma~\ref{lem:compatible-cubes} gives a unique cube in $\mathscr{A}_W$ with the same edges at the identity. Under the identification $C_{W_T}\subseteq C_W$, the two cubes have the same vertices and edges. Translating by elements of $C_{W_T}$ identifies every cube of $\mathscr{A}_{W_T}$ with a cube of $\mathscr{A}_W$. These identifications preserve faces and hence identify $\mathscr{A}_{W_T}$ with a cubical subcomplex of $\mathscr{A}_W$.
\end{proof}

We will write $C_T\leq C_W$ for the image of the group $C_{W_T}$ embedded in $C_W$, even when $T$ is not connected or spherical.

The following elementary observation gives the dimension bound needed later.

\begin{lemma}\label{lem:dimension}
Every compatible family $\mathcal{A}$ satisfies $|\mathcal{A}|\leq|\bigcup_{I\in\mathcal{A}}I|$. In particular, $|\mathcal{A}|\leq|S|$ when $S$ is finite. Moreover, if $M\in\mathcal{A}$, the union of all members of $\mathcal{A}$ properly contained in $M$ is a proper subset of $M$.
\end{lemma}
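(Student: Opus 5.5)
We prove the second assertion first and derive the inequality from it by induction. Let $M\in\mathcal{A}$, and let $\mathcal{A}_M=\{I\in\mathcal{A}:I\subsetneq M\}$. Every member of $\mathcal{A}_M$ is a connected proper subset of $M$. The key observation is that no two members of $\mathcal{A}$ can jointly cover $M$, and more generally the maximal members of $\mathcal{A}_M$ cannot cover $M$. By compatibility, any two maximal members $I,J$ of $\mathcal{A}_M$ are not nested, so they are orthogonal. In particular they are disjoint, and no generator of $I$ is joined by an edge of the Coxeter diagram to a generator of $J$. Let $I_1,\dots,I_r$ be the maximal members of $\mathcal{A}_M$. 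The union of all members of $\mathcal{A}_M$ equals $I_1\cup\cdots\cup I_r$. Suppose this union were all of $M$. Then $M$ would be partitioned into the pairwise orthogonal sets $I_1,\dots,I_r$, with no diagram edges between distinct parts. Since $M$ is connected, this forces $r=1$ and $I_1=M$. That contradicts $I_1\subsetneq M$. (If $\mathcal{A}_M$ is empty, the union is empty, and $M\neq\emptyset$.) This proves the second assertion.

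For the inequality, we argue by induction on $|\mathcal{A}|$, or equivalently on the size of the union $U=\bigcup_{I\in\mathcal{A}}I$. Let $M_1,\dots,M_k$ be the maximal members of $\mathcal{A}$. By the same compatibility argument, they are pairwise orthogonal and hence disjoint. Moreover, $\mathcal{A}$ is the disjoint union of the subfamilies $\mathcal{A}^{(i)}=\{I\in\mathcal{A}:I\subseteq M_i\}$, and $U$ is the disjoint union of the $M_i$. It therefore suffices to show $|\mathcal{A}^{(i)}|\leq|M_i|$ for each $i$. Now $\mathcal{A}^{(i)}=\{M_i\}\cup\mathcal{A}_{M_i}$, and $\mathcal{A}_{M_i}$ is again a compatible family. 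By induction, $|\mathcal{A}_{M_i}|\leq\bigl|\bigcup_{I\in\mathcal{A}_{M_i}}I\bigr|$. By the second assertion, this union is a proper subset of $M_i$, so its size is at most $|M_i|-1$. Hence $|\mathcal{A}^{(i)}|\leq 1+(|M_i|-1)=|M_i|$. Summing over $i$ gives $|\mathcal{A}|\leq|U|$. The base case $\mathcal{A}=\emptyset$ is trivial. The bound $|\mathcal{A}|\leq|S|$ for finite $S$ follows immediately, since $U\subseteq S$.

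There is no serious obstacle. The only point needing care is that the members of $\mathcal{A}_M$ must be shown to lie inside the union of its maximal members. This holds because $\mathcal{A}$ is finite, so every member is contained in a maximal one. The connectedness step also uses the paper's convention that orthogonality means there are no diagram edges, that is, $m_{st}=2$.
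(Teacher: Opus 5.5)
Your proof is correct. You prove the second assertion exactly as the paper does: the maximal members of $\mathcal{A}_M$ are pairwise orthogonal, and connectedness of $M$ prevents them from covering $M$.

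For the inequality you take a different route. The paper avoids induction. For each $M\in\mathcal{A}$ it picks a ``private'' generator $t_M\in M\setminus U_M$, where $U_M$ is the union of the members properly contained in $M$. It then checks by cases (orthogonal, or one properly contained in the other) that $M\mapsto t_M$ is an injection $\mathcal{A}\to\bigcup_{I\in\mathcal{A}}I$. You instead split $\mathcal{A}$ along its pairwise orthogonal maximal members and recurse on each $\mathcal{A}_{M_i}$, gaining one element per level from the second assertion. Unwinding your induction amounts to exactly such a choice of private generators.

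The paper's argument is shorter and gives an explicit injection. Yours makes the tree-like nesting structure of $\mathcal{A}$ visible, at the cost of a little bookkeeping: subfamilies are compatible, and each member lies in exactly one maximal member. You handle this bookkeeping correctly.

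Two harmless slips in wording. First, ``no two members of $\mathcal{A}$ can jointly cover $M$'' should refer to members properly contained in $M$. Second, induction on $|\mathcal{A}|$ is not ``equivalent'' to induction on $|U|$. Your inductive step nevertheless works for either, since $\bigl|\bigcup_{I\in\mathcal{A}_{M_i}}I\bigr|<|M_i|\leq|U|$.
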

\begin{proof}
We first prove the last assertion. For each $M\in\mathcal{A}$, let
\[
 U_M=\bigcup_{\substack{I\in\mathcal{A}\\ I\subsetneq M}} I.
\]
We claim that $U_M\subsetneq M$. If there are no members of $\mathcal{A}$ properly contained in $M$, this is immediate. Otherwise, let $M_1,\ldots,M_r$ be the maximal members among those properly contained in $M$. Every member occurring in the definition of $U_M$ is contained in some $M_i$, so
\[
 U_M=M_1\cup\cdots\cup M_r.
\]
By maximality, no $M_i$ is contained in another $M_j$, so compatibility implies that the $M_i$ are pairwise orthogonal. If $r=1$, then $U_M=M_1\subsetneq M$. If $r\geq2$, the induced subdiagram on $U_M$ is disconnected, whereas the subdiagram on $M$ is connected. Thus, $U_M\neq M$ in either case.

For each $M\in\mathcal{A}$, choose $t_M\in M\setminus U_M$. We show that these vertices are distinct. If neither $I\subseteq J$ nor $J\subseteq I$, then compatibility implies that $I$ and $J$ are orthogonal. Hence, $I\cap J=\emptyset$, so $t_I\neq t_J$. If $I\subsetneq J$, then $I\subseteq U_J$, whence $t_J\notin I$, while $t_I\in I$. The remaining case is symmetric. Consequently,
\[
 \mathcal{A}\longrightarrow\bigcup_{I\in\mathcal{A}}I,
 \qquad M\longmapsto t_M,
\]
is injective, proving the required inequalities.
\end{proof}

For $\mathcal{U}\subseteq\mathcal{A}$, we order its members as $I_1,\ldots,I_k$ so that $I_i\subsetneq I_j$ implies $i<j$, and put
\begin{equation}\label{eq:vertices}
 v_{\mathcal{U}}=\sigma_{I_1}\cdots\sigma_{I_k},\qquad w_{\mathcal{U}}=g_W(v_{\mathcal{U}})=\omega_{I_1}\cdots\omega_{I_k}.
\end{equation}

\begin{lemma}\label{lem:vertices}
The expressions in \eqref{eq:vertices} are independent of the chosen ordering satisfying the above condition, and $V(Q_{\mathcal{A}}) =\{v_{\mathcal{U}}:\mathcal{U}\subseteq\mathcal{A}\}$. The map $\mathcal{U}\mapsto v_{\mathcal{U}}$ is a bijection from the set of subsets of $\mathcal{A}$ to $V(Q_{\mathcal{A}})$. For each $\mathcal{U}\subseteq\mathcal{A}$, the hyperplanes separating the identity and $v_{\mathcal{U}}$ are precisely those dual to the edges $(1,\sigma_I)$ with $I\in\mathcal{U}$.
\end{lemma}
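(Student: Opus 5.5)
We first show that the expressions in \eqref{eq:vertices} do not depend on the chosen ordering. Any two linear extensions of the partial order ``$I\subsetneq J$'' on $\mathcal{U}$ are connected by a sequence of transpositions of adjacent elements that are incomparable. By compatibility, two incomparable members $I,J$ of $\mathcal{A}$ are orthogonal. Hence $\sigma_I\sigma_J=\sigma_J\sigma_I$ by the commuting relation in \eqref{eq:presentation}, and likewise $\omega_I\omega_J=\omega_J\omega_I$. So $v_{\mathcal U}$ is well defined, and so is $w_{\mathcal U}=g_W(v_{\mathcal U})$.

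Next we identify the vertices of $Q_{\mathcal A}$ by induction on $|\mathcal{A}|$. The case $\mathcal A=\emptyset$ is trivial. Otherwise choose a maximal member $M\in\mathcal A$ and put $\mathcal A'=\mathcal A\setminus\{M\}$, which is again compatible. The cube $Q_{\mathcal A}$ is a product of $Q_{\mathcal A'}$ with the edge $(1,\sigma_M)$. Concretely, $Q_{\mathcal A'}$ is the face containing $1$ and spanned by the edges $(1,\sigma_I)$ with $I\in\mathcal A'$. The opposite face is obtained from $Q_{\mathcal A'}$ by crossing the edge dual to the same hyperplane as $(1,\sigma_M)$ at every vertex.

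We need to show that for each $\mathcal U'\subseteq\mathcal A'$ the vertex $v_{\mathcal U'}$ is joined in $Q_{\mathcal A}$ to $v_{\mathcal U'}\sigma_M$, and that $v_{\mathcal U'}\sigma_M=v_{\mathcal U'\cup\{M\}}$. The second equality holds because $M$ is maximal, so placing $M$ last gives an admissible ordering. For the first, we use the square description in Lemma~\ref{lem:compatible-cubes}. For $I\in\mathcal A'$, the square at $1$ spanned by $\sigma_I$ and $\sigma_M$ has fourth vertex $\sigma_I\sigma_M=\sigma_M\sigma_{I'}$. Here $I'=I$ if $I\perp M$, and $I'=\omega_M(I)$ if $I\subsetneq M$. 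So the edge at $\sigma_I$ parallel to $(1,\sigma_M)$ is $(\sigma_I,\sigma_I\sigma_M)$. Iterating along the squares of the cube, the edge parallel to $(1,\sigma_M)$ at $v_{\mathcal U'}$ is right multiplication by $\sigma_M$. This iteration rests on a fact about cubes in the CAT(0) complex $\mathscr A_W$: the parallel edge at a vertex is determined by any path of squares, and it is unique because the cube is embedded.

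The main obstacle is to make this iteration rigorous. The edges parallel to $(1,\sigma_I)$ at other vertices are generally not labelled $\sigma_I$; they are labelled by opposed subsets. For this reason I would set up the induction so that $M$ is always adjoined last and is maximal. Then only the label of the $M$-direction must be tracked, and the fact that every vertex of $Q_{\mathcal A'}$ is $v_{\mathcal U'}$ follows from the inductive hypothesis. Since $Q_{\mathcal A}$ has exactly $2^{|\mathcal A|}$ vertices and our list has that many entries, bijectivity follows once distinctness is shown. Distinctness follows from the hyperplane statement.

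For the hyperplane statement, note that the hyperplanes crossing $Q_{\mathcal A}$ are exactly the $|\mathcal A|$ hyperplanes dual to the edges $(1,\sigma_I)$, $I\in\mathcal A$. A hyperplane separates two vertices of a cube if and only if it crosses the cube and every path within the cube between them crosses it an odd number of times. By the inductive construction, the path $1,\ \sigma_{I_1},\ \sigma_{I_1}\sigma_{I_2},\ \ldots,\ v_{\mathcal U}$ runs along edges of $Q_{\mathcal A}$. At each step we pass from $v_{\{I_1,\dots,I_{j-1}\}}$ to $v_{\{I_1,\dots,I_j\}}$, crossing the hyperplane dual to $(1,\sigma_{I_j})$ exactly once. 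So the hyperplanes crossed are those indexed by $\mathcal U$, each exactly once. This path is a geodesic, and the separating hyperplanes are exactly those indexed by $\mathcal U$. In particular, distinct $\mathcal U$ give distinct vertices, which completes the bijection.
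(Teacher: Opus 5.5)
Your ordering argument is fine, and so is the hyperplane/distinctness argument once its path is known to lie in the cube. The gap is the inductive step itself. You assert: ``iterating along the squares of the cube, the edge parallel to $(1,\sigma_M)$ at $v_{\mathcal U'}$ is right multiplication by $\sigma_M$.'' To push the $M$-edge across an edge $(v,v\sigma_L)$ of $Q_{\mathcal A'}$, you must know two things: that this pair really is an edge of $Q_{\mathcal A'}$, and that $L\subsetneq M$ or $L\perp M$. If instead $M\subsetneq L$, the translated defining square is $v,\ v\sigma_M,\ v\sigma_M\sigma_L=v\sigma_L\sigma_{\omega_L(M)},\ v\sigma_L$. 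The $M$-edge at $v\sigma_L$ is then right multiplication by $\sigma_{\omega_L(M)}$, not $\sigma_M$.

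Your inductive hypothesis records only the vertex set of $Q_{\mathcal A'}$. As you note yourself, edges of $Q_{\mathcal A'}$ away from $1$ carry opposed labels, so you cannot read off which vertices are joined by cube edges or what their labels are. Tracking only the $M$-label is therefore not enough: you never exhibit a path in $Q_{\mathcal A'}$ from $1$ to $v_{\mathcal U'}$ all of whose steps are $\sigma_L$ with $L$ not containing $M$. The same missing fact is used again when you say the path $1,\sigma_{I_1},\sigma_{I_1}\sigma_{I_2},\ldots$ runs along edges of $Q_{\mathcal A}$.

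Within your scheme, one repair is to fix a linear extension $M_1,\ldots,M_n$ of $\mathcal A$ and strengthen the induction as follows. For every $k$ and every $\mathcal U\subseteq\{M_1,\ldots,M_k\}$, listed in this order as $I_1,I_2,\ldots$, the successive partial products from $1$ to $v_{\mathcal U}$ form an edge path in $Q_{\{M_1,\ldots,M_k\}}$ whose $i$-th edge is parallel to $(1,\sigma_{I_i})$. The new $M_k$-edges are then obtained by propagating along exactly these paths. Each step is right multiplication by $\sigma_{I_i}$ with $I_i\subsetneq M_k$ or $I_i\perp M_k$. The translated defining square must be the corresponding face of the cube, because two edges at a vertex of $\mathscr A_W$ span at most one square.

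The paper avoids path-tracking altogether by removing a minimal member $I$ of $\mathcal U$ rather than a maximal one. Then every defining square at $1$ spanned by $\sigma_I$ and $\sigma_J$, with $J\in\mathcal U\setminus\{I\}$, has fourth vertex $\sigma_I\sigma_J$. So the opposite face has at $\sigma_I$ exactly the edges $(\sigma_I,\sigma_I\sigma_J)$. Since a cube is determined by its edges at one vertex, this face is the left translate $\sigma_I Q_{\mathcal U\setminus\{I\}}$, and the induction closes at once with $v_{\mathcal U}=\sigma_I v_{\mathcal U\setminus\{I\}}$.
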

\begin{proof}
Any two orderings satisfying the above condition are related by a sequence of interchanges of adjacent members $I,J$ such that neither $I\subseteq J$ nor $J\subseteq I$. By compatibility, such members are orthogonal, so their generators, as well as their longest elements, commute. Thus, the products defining $v_{\mathcal{U}}$ and $w_{\mathcal{U}}$ are independent of the chosen ordering.

For $\mathcal{U}\subseteq\mathcal{A}$, the cube $Q_{\mathcal{U}}$ is the face of $Q_{\mathcal{A}}$ whose edges incident to the identity are precisely $(1,\sigma_I)$ for $I\in\mathcal{U}$. We show by induction on $|\mathcal{U}|$ that $v_{\mathcal{U}}$ is the vertex of $Q_{\mathcal{U}}$ opposite the identity. For $\mathcal{U}=\emptyset$, this holds since $Q_{\mathcal{U}}=\{1\}$ and $v_{\mathcal{U}}=1$.

Suppose $\mathcal{U}\neq\emptyset$, and choose $I\in\mathcal{U}$ such that no member of $\mathcal{U}$ is properly contained in $I$. For every $J\in\mathcal{U}\setminus\{I\}$, either $I\subsetneq J$ or $I\perp J$, so the corresponding defining square has vertices, in cyclic order,
\[
 1,\quad \sigma_{I},\quad \sigma_{I}\sigma_{J},\quad \sigma_{J}.
\]
These squares show that the face of $Q_{\mathcal{U}}$ opposite $Q_{\mathcal{U}\setminus\{I\}}$ has precisely the edges
\[
 (\sigma_{I},\sigma_{I}\sigma_{J}),
 \qquad J\in\mathcal{U}\setminus\{I\},
\]
incident to $\sigma_{I}$. Since a cube is uniquely determined by its edges at a vertex, this face is $\sigma_{I} Q_{\mathcal{U}\setminus\{I\}}$. By induction, its vertex opposite $\sigma_{I}$ is $\sigma_{I} v_{\mathcal{U}\setminus\{I\}}$. We may place $I$ first in an ordering satisfying the above condition, so this vertex is $v_{\mathcal{U}}$. It is also the vertex of $Q_{\mathcal{U}}$ opposite the identity.

Every vertex of $Q_{\mathcal{A}}$ is opposite the identity in a unique face of $Q_{\mathcal{A}}$ containing the identity. Applying this to the faces $Q_{\mathcal{U}}$ proves the asserted description of $V(Q_{\mathcal{A}})$ and the bijectivity of $\mathcal{U}\mapsto v_{\mathcal{U}}$. Moreover, the hyperplanes separating the identity from $v_{\mathcal{U}}$ are exactly those crossing $Q_{\mathcal{U}}$, namely those dual to the edges $(1,\sigma_I)$ with $I\in\mathcal{U}$.
\end{proof}

\begin{lemma}\label{lem:pure}
The action of $PC_W$ on $\mathscr{A}_W$ has trivial cube stabilizers, and $PC_W$ is torsion-free. Every finite subgroup of $C_W$ is a $2$-group, and the restriction of $g_W$ to each such subgroup is injective.
\end{lemma}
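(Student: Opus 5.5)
Suppose $p\in PC_W$ stabilizes a cube $Q$; the plan is to show $p=1$. Every cube is a left translate of a cube $Q_{\mathcal A}$ containing the identity, and $PC_W$ is normal in $C_W$, so after conjugating we may assume $Q=Q_{\mathcal A}$ for a compatible family $\mathcal A$. Then $p\cdot 1=p$ is a vertex of $Q_{\mathcal A}$. By Lemma~\ref{lem:vertices}, $p=v_{\mathcal U}$ for a unique $\mathcal U\subseteq\mathcal A$, and so $g_W(p)=w_{\mathcal U}=\omega_{I_1}\cdots\omega_{I_k}$, with the $I_j$ ordered so that inner members come first. The key step is to show that $w_{\mathcal U}\neq 1$ whenever $\mathcal U\neq\emptyset$. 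This forces $\mathcal U=\emptyset$ and hence $p=1$.

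For that key step, take $M\in\mathcal U$ maximal. Every other member of $\mathcal U$ is either properly contained in $M$ or orthogonal to $M$, and in the latter case it is disjoint from $M$ with commuting generators. The product therefore factors as $w_{\mathcal U}=x\cdot y$. Here $x\in W_M$ is $\bigl(\prod_{I\in\mathcal U,\,I\subsetneq M}\omega_I\bigr)\omega_M$, and $y$ lies in $W_{S\setminus M}$. The orthogonal pieces commute with $W_M$, so they can be moved to the other side. Since $W_M\cap W_{S\setminus M}=1$ and $W_M\times W_{S'}$ sits as a direct factor when $S'$ is orthogonal to $M$, it suffices to show $x\neq 1$. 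A cleaner way to see this is to project $W_{M\cup S'}=W_M\times W_{S'}$ onto $W_M$. Now $x=1$ would mean $\omega_M\in W_{U_M}$, where $U_M$ is the union of the members properly contained in $M$. By Lemma~\ref{lem:dimension}, $U_M\subsetneq M$. But $\omega_M$ has support all of $M$: every reduced expression of the longest element uses every generator of $M$. This contradicts the standard fact that a reduced element of $W_{U_M}$ has all its reduced expressions in $U_M$. Hence $x\neq1$.

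If a maximal member of $\mathcal U$ is not orthogonal to the other maximal ones, the projection argument needs care. Two distinct maximal members of the compatible family are orthogonal, though, so the union $T$ of the members of $\mathcal U$ gives $W_T=\prod_M W_M$ over the maximal members $M$. Projecting to any one factor suffices. This gives trivial cube stabilizers.

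For torsion-freeness, a finite subgroup of $PC_W$ lies in some $C_{W_U}$ with $U$ finite, by Proposition~\ref{prop:parabolic}. It acts on the complete, finite-dimensional CAT(0) cube complex $\mathscr A_{W_U}$, which embeds cubically in $\mathscr A_W$. By Bruhat--Tits it stabilizes a cube there, and that cube is also a cube of $\mathscr A_W$. So the subgroup is trivial. The same argument shows that a finite $F\le C_W$ fixes a cube $Q$, and $F\cap PC_W\le \operatorname{Stab}_{PC_W}(Q)=1$, so $g_W|_F$ is injective. The stabilizer $\operatorname{Stab}_{C_W}(Q)$ acts on $V(Q)$ freely, because the action on vertices is free, and $|V(Q)|$ is a power of two. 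It also acts on the $2$-group of cube symmetries $(\mathbb Z/2)^d\rtimes S_d$ restricted to its orbit structure. The cleanest route is the following: the stabilizer acts freely on the $2^d$ vertices, and it preserves the hyperplane structure. The subgroup fixing each hyperplane class acts on $V(Q)$ through $(\mathbb Z/2)^d$, while permutations of hyperplanes must preserve nesting data. The main obstacle is showing that the permutation part is a $2$-group. I would try to avoid this entirely by noting that the permutation part of any element preserves the poset $\mathcal A$ up to the induced action. Failing that, I would use the freeness of the action on $2^d$ vertices, which only gives that $|F|$ divides $2^d$ when the action is transitive on an orbit. That does in fact suffice: each $F$-orbit on $V(Q)$ is regular, so $|F|$ divides the orbit size. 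I would then argue that some orbit size is a power of $2$ by considering the orbit of $1$ inside the vertex set of a minimal $F$-invariant face. I expect this $2$-group step to be the delicate point.
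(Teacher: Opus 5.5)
Your treatment of the cube stabilizers, of torsion-freeness, and of the injectivity of $g_W$ on finite subgroups follows the paper's proof and is correct. Conjugating to $Q_{\mathcal A}$ via normality of $PC_W$, writing $p=v_{\mathcal U}$, isolating $x=\bigl(\prod_{I\subsetneq M}\omega_I\bigr)\omega_M\in W_M$ for a maximal $M\in\mathcal U$, and combining Lemma~\ref{lem:dimension} with the fact that $\omega_M$ has full support in $M$ is exactly the paper's argument.

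Two small points there. Your extra paragraph about maximal members that are ``not orthogonal'' is unnecessary. Since $M$ is maximal in $\mathcal U$, compatibility makes every member not contained in $M$ orthogonal to $M$, and then $x=y^{-1}\in W_M\cap W_{S\setminus M}=\{1\}$ already settles it. You should also say why $U$ is finite: $F$ is finitely generated, each generator is a word in finitely many $\sigma_I$, and each spherical $I$ is finite.

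The genuine gap is the assertion that every finite subgroup is a $2$-group. You leave it unproved, and the routes you sketch are either wrong or circular. The symmetry group $(\mathbb Z/2\mathbb Z)^d\rtimes S_d$ of a $d$-cube has order $2^d d!$, so it is not a $2$-group once $d\geq3$. Nothing about a ``permutation part'' can be extracted from it. Reducing to ``some orbit has size a power of $2$'' via a minimal invariant face merely restates the claim, since for a free action every orbit has size exactly $|F|$. Also, freeness yields $|F|\mid 2^d$ without any transitivity hypothesis.

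The missing observation takes one line. Since $C_W$ acts freely on the vertices of $\mathscr A_W$, every $F$-orbit in $V(Q)$ has exactly $|F|$ elements. These orbits partition $V(Q)$, which has $2^d$ elements, so $|F|$ divides $2^d$. This is how the paper concludes; no analysis of hyperplanes or nesting data is needed.
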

\begin{proof}
Let $\mathcal{A}$ be a compatible family. We first show that $w_{\mathcal{U}}\ne1$ for every nonempty $\mathcal{U}\subseteq\mathcal{A}$. Let $M_1,\ldots,M_r$ be the maximal members of $\mathcal{U}$. These sets are pairwise orthogonal. The longest elements indexed by subsets of different $M_i$ commute. Grouping together the factors $\omega_I$ with $I\subseteq M_i$, we obtain $w_{\mathcal{U}}=w_1\cdots w_r$ for some $w_i\in W_{M_i}$.

Since the $M_i$ are pairwise orthogonal, $w_{\mathcal{U}}=1$ if and only if $w_i=1$ for every $i$. For a maximal member $M$, the corresponding factor in $w_{\mathcal{U}}$ has the form $u \omega_M$, where $u\in W_N$ and $N$ is the union of the proper members of $\mathcal{U}$ contained in $M$. By Lemma~\ref{lem:dimension}, $N\subsetneq M$. Every reduced expression for $\omega_M$ contains all elements of $M$, whereas every element of $W_N$ admits a reduced expression using only elements of $N$; see \cite[Corollaries~1.4.6(ii) and~1.4.8(i)--(ii)] {BjornerBrenti}. This implies that $\omega_M\notin W_N$, and so $u\omega_M\ne1$, proving that $w_{\mathcal{U}}\ne1$. Together with Lemma~\ref{lem:vertices}, this gives $V(Q_{\mathcal{A}})\cap PC_W=\{1\}$ for every compatible family $\mathcal{A}$.

Now let $p\in PC_W$ stabilize a cube $Q$, and choose a vertex $a\in V(Q)$. The translated cube $a^{-1}Q$ contains the identity, so it equals $Q_{\mathcal{A}}$ for some compatible family $\mathcal{A}$. Since $pa\in V(Q)$ and $PC_W$ is normal, we have $a^{-1}pa\in PC_W\cap V(a^{-1}Q)=\{1\}$. Hence $p=1$, proving that the action of $PC_W$ on $\mathscr{A}_W$ has trivial cube stabilizers.

Let $F\leq C_W$ be finite. Choose words in the generators $\sigma_I$ representing a finite generating set of $F$, and let $U\subseteq S$ be the union of all subsets $I$ for which $\sigma_I$ occurs in these words. Each such $I$ is spherical and hence finite, so $U$ is finite. By Proposition~\ref{prop:parabolic}, we have $F\leq C_U$.

Since $U$ is finite, the CAT(0) cube complex $\mathscr{A}_{W_U}$ is locally finite and finite-dimensional, and hence complete. By the fixed point argument in Subsection~\ref{subsec:basic2}, $F$ stabilizes a cube of $\mathscr{A}_{W_U}$. By Proposition~\ref{prop:parabolic}, this is also a cube of $\mathscr{A}_W$. Since the action of $PC_W$ on $\mathscr{A}_W$ has trivial cube stabilizers, we have $F\cap PC_W=\{1\}$. Hence, the restriction of $g_W$ to $F$ is injective. If this cube has dimension $d$, then $F$ acts freely on its $2^d$ vertices, since $C_W$ acts freely on vertices. Thus, $|F|$ divides $2^d$, proving that $F$ is a $2$-group. Finally, every finite subgroup $H\leq PC_W$ is also a finite subgroup of $C_W$, so the preceding argument gives $H=H\cap PC_W=\{1\}$. Therefore, $PC_W$ is torsion-free.
\end{proof}

\subsection{Reduction to finite Coxeter groups}
The proof of Lemma~\ref{lem:pure} places every finite subgroup of $C_W$ inside $C_U$ for some finite subset $U\subseteq S$, although $W_U$ need not be finite. We now show that, after conjugation, the subset can be chosen to be spherical. This reduces the classification of finite subgroups of $C_W$ to the case of finite Coxeter groups.

\begin{theorem}\label{thm:localization}
Let $(W,S)$ be any Coxeter system. For every finite subgroup $F\leq C_W$, there exist a spherical subset $T\subseteq S$ and an element $a\in C_W$ such that $a^{-1}Fa\leq C_T$.

For a compatible family $\mathcal{A}$, let $M_1,\ldots,M_r$ be its maximal members and set $T=\bigcup_{i=1}^r M_i$. Then $T$ is spherical and
\begin{equation}\label{eq:local-stabilizer}
 \operatorname{Stab}_{C_W}(Q_{\mathcal{A}})
 =\operatorname{Stab}_{C_T}(Q_{\mathcal{A}}).
\end{equation}
\end{theorem}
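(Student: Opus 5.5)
We first prove the second assertion, and then deduce the first from it.

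\emph{$T$ is spherical.} Let $M_1,\ldots,M_r$ be the maximal members of the compatible family $\mathcal{A}$. No $M_i$ is contained in another, so compatibility forces them to be pairwise orthogonal. Hence $W_T\cong W_{M_1}\times\cdots\times W_{M_r}$. Each $W_{M_i}$ is finite because $M_i\in\mathcal{I}(S)$, so $W_T$ is finite and $T$ is spherical.

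\emph{The stabilizer identity \eqref{eq:local-stabilizer}.} Every member of $\mathcal{A}$ is contained in some $M_i$, and therefore in $T$. So every $\sigma_I$ with $I\in\mathcal{A}$ lies in $C_T$. By Lemma~\ref{lem:vertices}, each vertex $v_{\mathcal{U}}$ of $Q_{\mathcal{A}}$ is a product of such generators, so $V(Q_{\mathcal{A}})\subseteq C_T$.

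Now let $g\in\operatorname{Stab}_{C_W}(Q_{\mathcal{A}})$. Since $g$ maps $Q_{\mathcal{A}}$ onto itself, $g=g\cdot1$ is a vertex of $Q_{\mathcal{A}}$, and hence $g\in C_T$. This proves $\subseteq$; the reverse inclusion is trivial. The cube $Q_{\mathcal{A}}$ lies in the subcomplex $\mathscr{A}_{W_T}$ by Proposition~\ref{prop:parabolic}, so the right-hand side of \eqref{eq:local-stabilizer} may be read either in $\mathscr{A}_W$ or in $\mathscr{A}_{W_T}$.

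\emph{The first assertion.} Let $F\leq C_W$ be finite. As in the proof of Lemma~\ref{lem:pure}, choose a finite $U\subseteq S$ with $F\leq C_U$. The complex $\mathscr{A}_{W_U}$ is complete, so by the fixed point argument $F$ stabilizes a cube $Q$ of $\mathscr{A}_{W_U}$. Proposition~\ref{prop:parabolic} makes $Q$ a cube of $\mathscr{A}_W$.

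Pick a vertex $a\in V(Q)$. Then $a^{-1}Q$ is a cube containing the identity, so Lemma~\ref{lem:compatible-cubes} gives $a^{-1}Q=Q_{\mathcal{A}}$ for a compatible family $\mathcal{A}$. The conjugate $a^{-1}Fa$ stabilizes $Q_{\mathcal{A}}$. By the part already proved, $a^{-1}Fa\leq\operatorname{Stab}_{C_W}(Q_{\mathcal{A}})\leq C_T$ with $T$ spherical.

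There is no serious obstacle. The one point that needs care is to use the identity vertex as the base point, so that the stabilizer embeds into the vertex set of the cube.
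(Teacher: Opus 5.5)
Your proposal is correct and follows the paper's proof essentially step for step. Orthogonality of the maximal members shows that $T$ is spherical. The inclusion $V(Q_{\mathcal{A}})\subseteq C_T$, together with $g=g\cdot 1\in V(Q_{\mathcal{A}})$, gives the stabilizer identity. Translating a cube stabilized by $F$ to $Q_{\mathcal{A}}$ gives the first assertion, where the cube comes from the finite subset $U$ and the fixed point argument, as in the proof of Lemma~\ref{lem:pure}. Your remark that the right-hand side can be read in either complex, via Proposition~\ref{prop:parabolic}, is a small clarification that the paper leaves implicit.
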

\begin{proof}
If $\mathcal{A}=\emptyset$, then $T=\emptyset$ is spherical and $Q_{\mathcal{A}}=\{1\}$. Both stabilizers in \eqref{eq:local-stabilizer} are trivial. We may therefore assume that $\mathcal{A}\neq\emptyset$. To prove \eqref{eq:local-stabilizer}, it suffices to show that $\operatorname{Stab}_{C_W}(Q_{\mathcal{A}}) \subseteq \operatorname{Stab}_{C_T}(Q_{\mathcal{A}})$.

Distinct maximal members of $\mathcal{A}$ cannot be nested, so they are pairwise orthogonal. Each $M_i$ is spherical by the definition of the generating set. Consequently, $W_T\cong W_{M_1}\times\cdots\times W_{M_r}$ is finite. Every member of $\mathcal{A}$ lies in one of the $M_i$, so Lemma~\ref{lem:vertices} gives $V(Q_\mathcal{A})\subseteq C_T$. If $h\in\operatorname{Stab}_{C_W}(Q_\mathcal{A})$, then $h=h\cdot1\in V(Q_\mathcal{A})\subseteq C_T$. Thus, the stabilizer $\operatorname{Stab}_{C_W}(Q_{\mathcal{A}})$ is contained in $C_T$, proving \eqref{eq:local-stabilizer}.

Let $F\leq C_W$ be finite. As shown in the proof of Lemma~\ref{lem:pure}, $F$ stabilizes a cube $Q$ of $\mathscr{A}_W$. Choose a vertex $a\in V(Q)\subseteq C_W$. Then $a^{-1}Q=Q_{\mathcal{A}}$ for a compatible family $\mathcal{A}$, and $a^{-1}Fa$ stabilizes $Q_{\mathcal{A}}$. Let $T$ be the union of the maximal members of $\mathcal{A}$. The preceding argument shows that $T$ is spherical, and \eqref{eq:local-stabilizer} gives $a^{-1}Fa\leq C_T$.
\end{proof}

\subsection{Cube stabilizers}
Let $K_\mathcal A=\operatorname{Stab}_{C_W}(Q_\mathcal A)$ and $\mathcal P_\mathcal A=\{W_I:I\in\mathcal A\}$. The following theorem shows that a vertex $v_{\mathcal{U}}$ of $Q_{\mathcal{A}}$ belongs to $K_{\mathcal{A}}$ if and only if conjugation by $w_{\mathcal{U}}$ permutes the subgroups in $\mathcal P_{\mathcal{A}}$.

\begin{theorem}\label{thm:criterion}
For every compatible family $\mathcal A$,
\begin{equation}\label{eq:criterion}
 K_\mathcal A=\{v_{\mathcal{U}}:\mathcal{U}\subseteq\mathcal{A},\ w_{\mathcal{U}}\mathcal P_{\mathcal{A}} w_{\mathcal{U}}^{-1}=\mathcal P_{\mathcal{A}}\}.
\end{equation}
The restriction of $g_W$ identifies $K_\mathcal A$ with the finite $2$-subgroup
\[
 H_{\mathcal{A}}:=\{w_{\mathcal{U}}:\mathcal{U}\subseteq\mathcal A,\ w_{\mathcal{U}}\mathcal P_\mathcal A w_{\mathcal{U}}^{-1}=\mathcal P_\mathcal A\}\leq W.
\]
Thus, a finite group $F$ embeds into $C_W$ if and only if $F$ embeds into $H_\mathcal A$ for some compatible family $\mathcal A$.
\end{theorem}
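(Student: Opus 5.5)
The argument has three stages: reduce the stabilizer condition to a statement about hyperplanes at the identity, translate that into the action of $w_{\mathcal U}$ on $\mathcal P_{\mathcal A}$, and then read off the consequences from Lemma~\ref{lem:pure} and Theorem~\ref{thm:localization}.

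\emph{Reduction to edges at the identity.} Since $C_W$ acts freely on vertices and $1\in V(Q_{\mathcal A})$, any $h\in K_{\mathcal A}$ equals $h\cdot 1$, which is a vertex of $Q_{\mathcal A}$. By Lemma~\ref{lem:vertices}, $h=v_{\mathcal U}$ for a unique $\mathcal U\subseteq\mathcal A$. Conversely, fix $v=v_{\mathcal U}$. Then $vQ_{\mathcal A}$ is a cube containing $v$, and $vQ_{\mathcal A}=Q_{\mathcal A}$ holds exactly when the two cubes have the same edges at $v$. This is because a cube is determined by its edges at a vertex. The edges of $vQ_{\mathcal A}$ at $v$ are $(v,v\sigma_I)$ for $I\in\mathcal A$. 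So I need to describe the edges of $Q_{\mathcal A}$ at $v_{\mathcal U}$ by their labels, that is, as $(v,v\sigma_J)$ for an explicit set $\mathcal A^{\mathcal U}$ of $J$. The membership condition then becomes $\mathcal A^{\mathcal U}=\mathcal A$.

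\emph{Computing the labels (main obstacle).} I would compute the labels by the same induction as in Lemma~\ref{lem:vertices}. Move from $1$ to $\sigma_I$ with $I$ minimal in $\mathcal U$, and transport edges across the defining squares:
\begin{itemize}
\item a square for $J\perp I$ keeps the label $J$;
\item a square for $I\subsetneq J$ keeps the label $J$, since $\sigma_I\sigma_J\sigma_I^{-1}$ is not a generator here; the vertex $\sigma_I\sigma_J$ is reached from $\sigma_I$ by the edge labelled $J$;
\item a square for $J\subsetneq I$ has opposite edge $(\sigma_I,\sigma_I\sigma_{\omega_I(J)})$, by the relation $\sigma_I\sigma_J=\sigma_{\omega_I(J)}\sigma_I$, so the label becomes $\omega_I(J)$;
\item the edge $(1,\sigma_I)$ itself stays labelled $I$.
\end{itemize}
Iterating, I expect to show that the label set at $v_{\mathcal U}$ is obtained from $\mathcal A$ by conjugating each $J$ by the product of those $\omega_I$ with $I\in\mathcal U$ and $J\subsetneq I$, taken in the appropriate order.

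I then need to compare this with conjugation by $w_{\mathcal U}$. For $J\in\mathcal A$, each $\omega_I$ with $I\in\mathcal U$ falls into one of three cases relative to $J$:
\begin{itemize}
\item if $I\perp J$, then $\omega_I$ centralizes $W_J$;
\item if $I\subseteq J$, then $\omega_I\in W_J$, so it normalizes $W_J$; this includes $I=J$;
\item if $J\subsetneq I$, then $\omega_I$ acts by $\omega_I(\cdot)$.
\end{itemize}
Hence $w_{\mathcal U}W_Jw_{\mathcal U}^{-1}=W_{J'}$, where $J'$ is the transported label. The care needed is in showing that the factors $\omega_I$ with $I\subseteq J$ do not disturb the outer conjugations. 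I would handle this by ordering $\mathcal U$ so that inner members come first; then each such factor normalizes everything containing it. I would also use that $W_J=W_{J'}$ forces $J=J'$ for standard parabolic subgroups. This gives $\mathcal A^{\mathcal U}=\mathcal A$ if and only if $w_{\mathcal U}\mathcal P_{\mathcal A}w_{\mathcal U}^{-1}=\mathcal P_{\mathcal A}$, which is \eqref{eq:criterion}.

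\emph{Consequences.} $K_{\mathcal A}$ is finite, since it acts freely on the finitely many vertices of $Q_{\mathcal A}$. By Lemma~\ref{lem:pure}, $g_W$ is injective on $K_{\mathcal A}$, so $K_{\mathcal A}\cong g_W(K_{\mathcal A})=H_{\mathcal A}$, a finite $2$-group. For the final equivalence, suppose $F\le C_W$ is finite. The proof of Theorem~\ref{thm:localization} gives $a^{-1}Fa\le K_{\mathcal A}$ for some compatible $\mathcal A$, so $F$ embeds into $H_{\mathcal A}$. Conversely, $H_{\mathcal A}\cong K_{\mathcal A}\le C_W$, so any $F$ embedding into $H_{\mathcal A}$ embeds into $C_W$.
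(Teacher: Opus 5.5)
Your argument is correct, but it reaches the criterion by a different route from the paper's.

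\textbf{Your route.} You compute the edges of $Q_{\mathcal A}$ at $v_{\mathcal U}$ explicitly, transporting the edges at $1$ across defining squares one generator at a time. The induction works for the following reason. Let $\mathcal A_1$ be the compatible family obtained from $\mathcal A$ by replacing each $J\subsetneq I$ with $\omega_I(J)$. Then $\sigma_I^{-1}Q_{\mathcal A}=Q_{\mathcal A_1}$. Since $I$ is minimal in $\mathcal U$, every member of $\mathcal U\setminus\{I\}$ is unchanged in $\mathcal A_1$. Hence $v_{\mathcal U}=\sigma_I\,v_{\mathcal U\setminus\{I\}}$, with the second factor computed in $Q_{\mathcal A_1}$, and the step can be repeated. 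This route is more explicit: it identifies which edge at $v_{\mathcal U}$ is parallel to each $(1,\sigma_J)$. It also uses only squares lying in $Q_{\mathcal A}$.

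\textbf{The paper's route.} The paper defines a label $\lambda(h,h\sigma_I)=g_W(h)W_Ig_W(h)^{-1}$ on all edges. Using that every square is a translate of a defining square, it checks that opposite edges of every square have equal labels. So $\lambda$ is constant on hyperplanes and equivariant. The hyperplanes crossing $v_{\mathcal U}Q_{\mathcal A}$ therefore carry the labels $w_{\mathcal U}\mathcal P_{\mathcal A}w_{\mathcal U}^{-1}$. Since distinct edges at a vertex have distinct labels, both inclusions follow at once, with no ordering bookkeeping. This is shorter and more conceptual.

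\textbf{A slip in direction.} With the paper's convention $w_{\mathcal U}=\omega_{I_1}\cdots\omega_{I_k}$, inner members first, your transport gives $W_{J'}=w_{\mathcal U}^{-1}W_Jw_{\mathcal U}$, not $w_{\mathcal U}W_Jw_{\mathcal U}^{-1}$. The latter need not even be a standard parabolic subgroup. For example, in $A_3$ take $J=\{s_1\}\subsetneq I_1=\{s_1,s_2\}\subsetneq I_2=S$ and $\mathcal U=\{I_1,I_2\}$; then $w_{\mathcal U}W_Jw_{\mathcal U}^{-1}=\langle s_1s_2s_3s_2s_1\rangle$. This matches your own remark that the inner factors must act first. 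The slip is harmless for the theorem, because $w\mathcal P_{\mathcal A}w^{-1}=\mathcal P_{\mathcal A}$ holds if and only if $w^{-1}\mathcal P_{\mathcal A}w=\mathcal P_{\mathcal A}$.

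Your deduction of the remaining assertions from Lemma~\ref{lem:pure} and Theorem~\ref{thm:localization} is fine.
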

\begin{proof}
For an oriented edge $e=(h,h\sigma_I)$, define $\lambda(e)=g_W(h)W_Ig_W(h)^{-1}$. Since $\omega_I$ normalizes $W_I$, reversing the edge does not change its label. One can check that opposite edges of a square obtained by defining commuting or nesting relation have the same label. By Lemma~\ref{lem:compatible-cubes}, the same holds for every square. Thus, $\lambda$ is constant on the edges dual to a given hyperplane, and we denote this common value by $\lambda(\mathfrak h)$. For every $x\in C_W$ and every hyperplane $\mathfrak h$, the definition gives $\lambda(x\mathfrak h)=g_W(x)\lambda(\mathfrak h)g_W(x)^{-1}$. Moreover, distinct edges incident to any vertex have distinct labels, since $W_I=W_J$ implies $I=J$.

We notice that the hyperplanes crossing $Q_{\mathcal{A}}$ have label set $\mathcal P_{\mathcal{A}}$. Let $h\in K_{\mathcal{A}}$. Since $1\in V(Q_{\mathcal{A}})$ and $hQ_{\mathcal{A}}=Q_{\mathcal{A}}$, we have $h=h\cdot1\in V(Q_{\mathcal{A}})$. By Lemma~\ref{lem:vertices}, we can write $h=v_{\mathcal{U}}$ for some $\mathcal{U}\subseteq\mathcal{A}$. Equivariance then gives $w_{\mathcal{U}}\mathcal P_{\mathcal{A}}w_{\mathcal{U}}^{-1} =\mathcal P_{\mathcal{A}}$. This proves that $K_{\mathcal{A}}$ is contained in the right-hand side of \eqref{eq:criterion}.

Conversely, let $\mathcal{U}\subseteq\mathcal{A}$ satisfy  $w_{\mathcal{U}}\mathcal P_{\mathcal{A}}w_{\mathcal{U}}^{-1}  =\mathcal P_{\mathcal{A}}$. The cubes $Q_{\mathcal{A}}$ and $v_{\mathcal{U}}Q_{\mathcal{A}}$ have the common vertex $v_{\mathcal{U}}$. By equivariance and the assumed equality, the sets of labels of the hyperplanes crossing these cubes coincide. In each cube, every hyperplane crossing it is dual to exactly one edge incident to $v_{\mathcal{U}}$. Thus, the sets of labels of these incident edges also coincide. Since distinct edges incident to $v_{\mathcal{U}}$ have distinct labels, the two cubes have exactly the same edges incident to this vertex. A cube is uniquely determined by its edges incident to a vertex, so $Q_{\mathcal{A}}=v_{\mathcal{U}}Q_{\mathcal{A}}.$ Therefore, $v_{\mathcal{U}}\in K_{\mathcal{A}}$, proving the reverse inclusion in \eqref{eq:criterion}.

The inclusion $K_{\mathcal{A}}\subseteq V(Q_{\mathcal{A}})$ shows that $K_{\mathcal{A}}$ is finite. By Lemma~\ref{lem:pure}, it is a $2$-group and the restriction of $g_W$ to $K_{\mathcal{A}}$ is injective. Applying $g_W$ to \eqref{eq:criterion} gives $g_W(K_{\mathcal{A}})=H_{\mathcal{A}}$. Thus, this restriction is an isomorphism $K_{\mathcal{A}}\cong H_{\mathcal{A}}$, and $H_{\mathcal{A}}$ is a finite $2$-subgroup of $W$.

Finally, suppose that a finite group $F$ embeds into $C_W$, and identify $F$ with its image. As shown in the proof of Lemma~\ref{lem:pure}, $F$ stabilizes a cube $Q$. Choose a vertex $a\in V(Q)$. Then $a^{-1}Q=Q_{\mathcal{B}}$ for some compatible family $\mathcal{B}$, and $a^{-1}Fa\leq K_{\mathcal{B}}\cong H_{\mathcal{B}}$. Hence, $F$ embeds into $H_{\mathcal{B}}$. Conversely, if $F$ embeds into $H_{\mathcal{A}}$ for some compatible family $\mathcal{A}$, then $H_{\mathcal{A}}\cong K_{\mathcal{A}}\leq C_W$ gives an embedding of $F$ into $C_W$.
\end{proof}

The preceding theorem reduces the problem to the study of cube stabilizers. For each finite irreducible Coxeter type, we show that every cube stabilizer embeds into one of the groups in the corresponding row of Table~\ref{tab:classification}. Conversely, we also show that every group in that row is realized as a subgroup of $C_W$.

We next decompose a cube stabilizer according to the maximal members of its compatible family. Since each maximal member is connected and spherical, the following lemma reduces the computation to cube stabilizers for finite irreducible Coxeter systems.

\begin{lemma}\label{lem:maxsupport}
Let $M_1,\ldots,M_k$ be the maximal members of a compatible family $\mathcal A$, and let $\mathcal A_i=\{I\in\mathcal A:I\subseteq M_i\}$. Then $K_\mathcal A\cong\prod_{i=1}^k\operatorname{Stab}_{C_{M_i}}(Q_{\mathcal A_i})$.
\end{lemma}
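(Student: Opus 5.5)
The plan is to realize $K_\mathcal A$ directly as a product by working with vertices of the cube and the criterion of Theorem~\ref{thm:criterion}. Since the $M_i$ are pairwise orthogonal, every member of $\mathcal A_i$ is orthogonal to every member of $\mathcal A_j$ for $i\neq j$. So the subgroups $C_{M_i}$ pairwise commute elementwise, because the generators $\sigma_I$ with $I\subseteq M_i$ and $\sigma_J$ with $J\subseteq M_j$ commute by the defining relations. Moreover, $\mathcal A$ is the disjoint union of the $\mathcal A_i$. By Lemma~\ref{lem:vertices}, every subset $\mathcal U\subseteq\mathcal A$ splits as $\mathcal U=\bigsqcup_i\mathcal U_i$ with $\mathcal U_i\subseteq\mathcal A_i$. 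Using an ordering that lists $\mathcal U_1$ first, then $\mathcal U_2$, and so on, we get $v_{\mathcal U}=v_{\mathcal U_1}\cdots v_{\mathcal U_k}$ and $w_{\mathcal U}=w_{\mathcal U_1}\cdots w_{\mathcal U_k}$, with $w_{\mathcal U_i}\in W_{M_i}$.

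The key step is to show that $w_{\mathcal U}$ preserves $\mathcal P_\mathcal A$ if and only if each $w_{\mathcal U_i}$ preserves $\mathcal P_{\mathcal A_i}$. For $I\in\mathcal A_i$, conjugation by $w_{\mathcal U}$ agrees on $W_I$ with conjugation by $w_{\mathcal U_i}$, since the other factors centralize $W_{M_i}$. Hence
\[
w_{\mathcal U}W_Iw_{\mathcal U}^{-1}=w_{\mathcal U_i}W_Iw_{\mathcal U_i}^{-1}\leq W_{M_i}.
\]
A conjugate lying in $W_{M_i}$ can equal some $W_J$ with $J\in\mathcal A$ only if $J\subseteq M_i$, because $W_J\leq W_{M_i}$ forces $J\subseteq M_i$ by the standard fact $W_J\cap W_{M_i}=W_{J\cap M_i}$ from \cite{BjornerBrenti}. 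So $w_{\mathcal U}$ maps $\mathcal P_\mathcal A$ into itself exactly when each $w_{\mathcal U_i}$ maps $\mathcal P_{\mathcal A_i}$ into itself. Conjugation is injective and the sets are finite, so mapping into means mapping onto. I expect this separation argument to be the only point needing care.

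Next apply Theorem~\ref{thm:criterion} to the Coxeter system $(W_{M_i},M_i)$ and the compatible family $\mathcal A_i$, viewed in $\mathcal I(M_i)$. This identifies $\operatorname{Stab}_{C_{M_i}}(Q_{\mathcal A_i})$ with the set of $v_{\mathcal U_i}$ such that $w_{\mathcal U_i}$ preserves $\mathcal P_{\mathcal A_i}$. Here we use Proposition~\ref{prop:parabolic} to identify the cube $Q_{\mathcal A_i}$ in $\mathscr A_{W_{M_i}}$ with the face of $Q_\mathcal A$ inside $\mathscr A_W$, and to identify the corresponding vertices.

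Combining these facts, the multiplication map
\[
\prod_i\operatorname{Stab}_{C_{M_i}}(Q_{\mathcal A_i})\to K_\mathcal A,\qquad (x_1,\dots,x_k)\mapsto x_1\cdots x_k,
\]
is surjective by the equivalence above. It is a homomorphism because the factors commute elementwise. It is injective because $\mathcal U\mapsto v_{\mathcal U}$ is a bijection (Lemma~\ref{lem:vertices}), so distinct tuples $(\mathcal U_i)_i$ give distinct vertices. This yields the claimed isomorphism.
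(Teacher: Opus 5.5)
Your proof is correct and follows essentially the same route as the paper: you factor each vertex as $v_{\mathcal U}=\prod_i v_{\mathcal U_i}$, and you use the containment $W_J\leq W_{M_i}\Rightarrow J\subseteq M_i$ together with Theorem~\ref{thm:criterion}, applied to both $C_W$ and each $C_{M_i}$, to show that $v_{\mathcal U}\in K_{\mathcal A}$ if and only if each $v_{\mathcal U_i}$ lies in the corresponding stabilizer; you then check, via Lemma~\ref{lem:vertices}, that multiplication is a bijective homomorphism. Your explicit remarks about the admissibility of the concatenated ordering and about ``into implies onto'' for the finite set $\mathcal P_{\mathcal A}$ are small details that the paper leaves implicit.
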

\begin{proof}
The $M_i$ are pairwise orthogonal, and every member of $\mathcal A$ lies in exactly one $M_i$. Fix $\mathcal{U}\subseteq \mathcal{A}$ and write $\mathcal{U}_i=\mathcal{U}\cap\mathcal A_i$. By Lemma~\ref{lem:vertices}, every cube vertex factors uniquely as $v_{\mathcal{U}}=\prod_i v_{\mathcal{U}_i}$, with commuting factors. Let $w_i=g_W(v_{\mathcal{U}_i})\in W_{M_i}$. For $I\in\mathcal A_i$, all $w_j$ with $j\ne i$ centralize $W_I$. Moreover, if $w_iW_Iw_i^{-1}=W_J$ for $J\in\mathcal A$, then $W_J\leq W_{M_i}$, which implies $J\subseteq M_i$ and hence $J\in\mathcal A_i$. Consequently,
\[
 w_{\mathcal{U}}\mathcal P_\mathcal A w_{\mathcal{U}}^{-1}=\mathcal P_\mathcal A
 \quad\Longleftrightarrow\quad
 w_i\mathcal P_{\mathcal A_i}w_i^{-1}=\mathcal P_{\mathcal A_i}\quad\text{for every }i.
\]
By Theorem~\ref{thm:criterion}, applied to $C_W$ and each $C_{M_i}$, we have
\[
 v_{\mathcal{U}}\in K_{\mathcal{A}}
 \quad\Longleftrightarrow\quad
 v_{\mathcal{U}_i}\in
 \operatorname{Stab}_{C_{M_i}}(Q_{\mathcal{A}_i})
 \quad\text{for every }i.
\]
For $i\ne j$, the generators $\sigma_I$ with $I\subseteq M_i$ commute with the generators $\sigma_J$ with $J\subseteq M_j$, since $I\perp J$. This implies that every element of $C_{M_i}$ commutes with every element of $C_{M_j}$. Hence, multiplication defines a homomorphism
\[
 \prod_{i=1}^k\operatorname{Stab}_{C_{M_i}}(Q_{\mathcal{A}_i})
 \longrightarrow K_{\mathcal{A}},
 \qquad
 (h_1,\ldots,h_k)\longmapsto h_1\cdots h_k.
\]
The equivalence above shows that this map is surjective. To prove injectivity, suppose that $h_1\cdots h_k=1$. Since each $h_i$ stabilizes $Q_{\mathcal{A}_i}$, which contains the identity, we have $h_i\in V(Q_{\mathcal{A}_i})$. By Lemma~\ref{lem:vertices}, write $h_i=v_{\mathcal{U}_i}$ for some $\mathcal{U}_i\subseteq\mathcal{A}_i$. Letting $\mathcal{U}=\bigcup_i\mathcal{U}_i$, we obtain
\[
 v_{\mathcal{U}}
 =\prod_i v_{\mathcal{U}_i}
 =h_1\cdots h_k
 =1=v_{\emptyset}.
\]
Lemma~\ref{lem:vertices} implies $\mathcal{U}=\emptyset$, so every $\mathcal{U}_i$ is empty and every $h_i$ is the identity. Therefore, the homomorphism is injective.
\end{proof}

If a compatible family $\mathcal A$ contains $S$, then $S$ is its unique maximal member. In this case, Lemma~\ref{lem:maxsupport} does not give a decomposition into stabilizers associated with proper subsets of $S$. When $\omega_S$ is central in $W$, the following lemma shows that $K_{\mathcal A} \cong K_{\mathcal A\setminus\{S\}}\times\mathbb Z/2\mathbb Z$. Then we can apply Lemma~\ref{lem:maxsupport} to $\mathcal A\setminus\{S\}$, whose maximal members are proper subsets of $S$.

\begin{lemma}\label{lem:central}
Let $(W,S)$ be a finite irreducible Coxeter system such that $\omega_S$ is central in $W$. Let $\mathcal{A}$ be compatible, $\mathcal D=\mathcal{A}\setminus\{S\}$, and let $\delta=1$ if $S\in\mathcal{A}$, and $0$ otherwise. Then $K_\mathcal{A}\cong K_{\mathcal D}\times(\mathbb{Z}/2\mathbb{Z})^{\delta}$.
\end{lemma}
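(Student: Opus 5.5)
If $\delta=0$, then $\mathcal A=\mathcal D$ and there is nothing to prove, so assume $S\in\mathcal A$. The plan is to compare the two cube stabilizers through the criterion of Theorem~\ref{thm:criterion}, working inside $W$ via the isomorphisms $K_{\mathcal A}\cong H_{\mathcal A}$ and $K_{\mathcal D}\cong H_{\mathcal D}$.

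\textbf{Vertices of the two cubes.} Every member of $\mathcal D$ is properly contained in $S$, so in an admissible ordering $S$ comes last. Thus subsets of $\mathcal A$ are $\mathcal U$ or $\mathcal U\cup\{S\}$ with $\mathcal U\subseteq\mathcal D$, and
\[
 w_{\mathcal U\cup\{S\}}=w_{\mathcal U}\,\omega_S .
\]
Since $\omega_S$ is central, conjugation by $w_{\mathcal U}\omega_S$ equals conjugation by $w_{\mathcal U}$.

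\textbf{Comparing the invariance conditions.} Conjugation by any element of $W$ fixes $W_S=W$. Hence
\[
 w\mathcal P_{\mathcal A}w^{-1}=\mathcal P_{\mathcal A}
 \iff
 w\mathcal P_{\mathcal D}w^{-1}=\mathcal P_{\mathcal D}.
\]
For the forward direction, a bijection of $\mathcal P_{\mathcal A}$ fixing $W$ restricts to a bijection of $\mathcal P_{\mathcal A}\setminus\{W\}=\mathcal P_{\mathcal D}$. For the converse, the proper subgroups $W_I$ with $I\in\mathcal D$ are distinct from $W$, so adding $W$ back preserves invariance. Combining this with the first step gives
\[
 H_{\mathcal A}=H_{\mathcal D}\cup H_{\mathcal D}\,\omega_S .
\]

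\textbf{Direct product and disjointness.} The element $\omega_S$ lies in $H_{\mathcal A}$ (take $\mathcal U=\{S\}$), has order $2$, and is central. It remains to check that $\omega_S\notin H_{\mathcal D}$. This is exactly the argument of Lemma~\ref{lem:pure}: each $w_{\mathcal U}$ with $\mathcal U\subseteq\mathcal D$ lies in $W_N$ for $N=\bigcup\mathcal D\subsetneq S$, by Lemma~\ref{lem:dimension}, while $\omega_S\notin W_N$ because every reduced expression of $\omega_S$ involves all of $S$. Therefore
\[
 H_{\mathcal A}=H_{\mathcal D}\times\langle\omega_S\rangle .
\]
Applying the isomorphisms from Theorem~\ref{thm:criterion} then gives $K_{\mathcal A}\cong K_{\mathcal D}\times\mathbb Z/2\mathbb Z$.

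The only delicate point is the equivalence of the two invariance conditions, which hinges on $W$ being fixed by all conjugations and on no member of $\mathcal D$ having $W_I=W$. Everything else follows directly from the earlier lemmas.
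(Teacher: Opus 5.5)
Your proof is correct and has the same structure as the paper's. You split subsets of $\mathcal A$ into $\mathcal U$ and $\mathcal U\cup\{S\}$, and you show that the invariance conditions for $\mathcal A$ and $\mathcal D$ coincide because conjugation fixes $W$ and $\omega_S$ is central. The only difference is where the direct-product decomposition is carried out. You work in $H_{\mathcal A}\leq W$ and must separately show $\omega_S\notin H_{\mathcal D}$ via $\omega_S\notin W_N$. The paper works in $C_W$, where $\sigma_S$ is central by the nesting relations, and the disjointness of $K_{\mathcal D}$ and $K_{\mathcal D}\sigma_S$ follows directly from the bijection $\mathcal U\mapsto v_{\mathcal U}$ of Lemma~\ref{lem:vertices}.
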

\begin{proof}
If $S\notin\mathcal{A}$, then $\mathcal D=\mathcal{A}$ and $\delta=0$, so the isomorphism $K_\mathcal{A}\cong K_{\mathcal D}$ is immediate. Suppose that $S\in\mathcal{A}$. Since $\omega_S$ is central in $W$, $\sigma_S$ commutes with every generator of $C_W$.

We know that $\mathcal P_{\mathcal{A}}=\mathcal P_{\mathcal D}\cup\{W\}$. Every subgroup in $\mathcal P_{\mathcal D}$ is proper in $W$, and conjugation by any $w\in W$ fixes $W$ and sends proper subgroups to proper subgroups. Therefore,
\[
 w\mathcal P_{\mathcal{A}}w^{-1}=\mathcal P_{\mathcal{A}}
 \quad\Longleftrightarrow\quad
 w\mathcal P_{\mathcal D}w^{-1}=\mathcal P_{\mathcal D}.
\]
Moreover, since $\omega_S$ is central, we have $(w\omega_S)\mathcal P_{\mathcal D}(w\omega_S)^{-1} =w\mathcal P_{\mathcal D}w^{-1}$ for every $w\in W$.

Every subcollection of $\mathcal{A}$ is either $\mathcal{U}$ or $\mathcal{U}\cup\{S\}$ for some $\mathcal{U}\subseteq\mathcal D$. Since every member of $\mathcal D$ is properly contained in $S$, the ordering used in \eqref{eq:vertices} gives $ v_{\mathcal{U}\cup\{S\}}=v_{\mathcal{U}}\sigma_S$ and $ w_{\mathcal{U}\cup\{S\}}=w_{\mathcal{U}}\omega_S$. Thus, Lemma~\ref{lem:vertices} shows that every vertex of $Q_{\mathcal{A}}$ has one of the forms $v_{\mathcal{U}}$ and $v_{\mathcal{U}}\sigma_S$, where $\mathcal{U}\subseteq\mathcal D$. Applying Theorem~\ref{thm:criterion} and the preceding identities, we obtain
\[
 v_{\mathcal{U}}\in K_{\mathcal{A}}
 \quad\Longleftrightarrow\quad
 v_{\mathcal{U}}\in K_{\mathcal D}
 \quad\Longleftrightarrow\quad
 v_{\mathcal{U}}\sigma_S\in K_{\mathcal{A}}.
\]
Consequently, $K_{\mathcal{A}}=K_{\mathcal D}\cup K_{\mathcal D}\sigma_S$.

For $\mathcal{U},\mathcal{V}\subseteq\mathcal D$, the subsets $\mathcal{U}$ and $\mathcal{V}\cup\{S\}$ are distinct. Lemma~\ref{lem:vertices} therefore gives $v_{\mathcal{U}}\ne v_{\mathcal{V}\cup\{S\}}=v_{\mathcal{V}}\sigma_S$. Hence, the above union is disjoint. In particular, $\sigma_S\notin K_{\mathcal D}$.

Since $\sigma_S$ is central in $C_W$ and has order two, multiplication defines a homomorphism
\[
 K_{\mathcal D}\times\langle\sigma_S\rangle
 \longrightarrow K_{\mathcal{A}},
 \qquad
 (h,\sigma_S^\varepsilon)\longmapsto h\sigma_S^\varepsilon.
\]
The disjoint decomposition above shows that every element of $K_{\mathcal{A}}$ has a unique expression $h\sigma_S^\varepsilon$, with $h\in K_{\mathcal D}$ and $\varepsilon\in\{0,1\}$. Thus, this homomorphism is an isomorphism, and consequently, $K_{\mathcal{A}} \cong K_{\mathcal D}\times\mathbb{Z}/2\mathbb{Z}$. 
\end{proof}

\subsection{Reducible Coxeter systems}
By Theorem~\ref{thm:localization}, every finite subgroup of $C_W$ has a conjugate contained in $C_T$ for some spherical subset $T\subseteq S$. Since $W_T$ has only finitely many irreducible components, it suffices to consider finite products when reducing the classification to irreducible Coxeter systems.

If $(W,S)=\prod_{i=1}^r(W_i,S_i)$, then each connected spherical subset of $S$ lies in one $S_i$, and generators from distinct components commute. Therefore, the defining presentation gives $C_W\cong\prod_{i=1}^r C_{W_i}$. Under this identification, $g_W$ is the product of the maps $g_{W_i}$, so also $PC_W\cong\prod_{i=1}^r PC_{W_i}$. Neither the $W_i$ nor the $S_i$ need be finite.

\begin{proposition}\label{prop:product}
Let $(W,S)=\prod_{i=1}^r(W_i,S_i)$ be a finite product of Coxeter systems, and let $F$ be a finite group. Then $F$ embeds into $C_W$ if and only if $F$ embeds into $\prod_i F_i$ for some finite subgroups $F_i\leq C_{W_i}$.
\end{proposition}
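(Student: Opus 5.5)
The plan is to use the identification $C_W\cong\prod_{i=1}^r C_{W_i}$ from the preceding paragraph, together with the cube-stabilizer description of finite subgroups.

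The direction ($\Leftarrow$) is immediate. If $F_i\leq C_{W_i}$ are finite subgroups, then $\prod_i F_i$ is a subgroup of $\prod_i C_{W_i}\cong C_W$. Hence any group embedding into $\prod_i F_i$ also embeds into $C_W$.

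For ($\Rightarrow$), a first attempt would take a finite subgroup $F\leq C_W\cong\prod_i C_{W_i}$ and project it to each factor via $\pi_i\colon C_W\to C_{W_i}$. Put $F_i=\pi_i(F)$. Each $F_i$ is a finite subgroup of $C_{W_i}$, being a homomorphic image of a finite group. The map $f\mapsto(\pi_1(f),\ldots,\pi_r(f))$ is the restriction to $F$ of the isomorphism $C_W\to\prod_i C_{W_i}$, so it is injective. Its image lies in $\prod_i F_i$, which gives the required embedding.

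As an alternative route, which additionally makes the $F_i$ cube stabilizers, I would use Theorem~\ref{thm:criterion}. It gives $F\hookrightarrow K_{\mathcal A}$ for some compatible family $\mathcal A$. Each member of $\mathcal A$ is connected, so it lies in a single $S_i$, and members from different $S_i$ are orthogonal. The maximal members of $\mathcal A$ can therefore be grouped by component, and Lemma~\ref{lem:maxsupport} gives $K_{\mathcal A}\cong\prod_i \operatorname{Stab}_{C_{W_i}}(Q_{\mathcal A\cap\mathcal I(S_i)})$. Each factor is finite by Theorem~\ref{thm:criterion}.

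The only point needing care is that the projection argument really uses the isomorphism $C_W\cong\prod_i C_{W_i}$ coming from the presentation. I would justify it explicitly. A connected spherical $I\subseteq S$ lies in a single $S_i$, since $m_{st}=2$ across components. Commutation relations between generators from different components appear in \eqref{eq:presentation}. Nesting relations only involve sets inside the same component. Hence the presentation of $C_W$ is the free product of the presentations of the $C_{W_i}$ with all cross commutators added, which is a presentation of the direct product. I do not expect any step to be a genuine obstacle, since this isomorphism is the only substantive input.
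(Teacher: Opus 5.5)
Your projection argument is the paper's proof: identify $C_W$ with $\prod_i C_{W_i}$ using the presentation, set $F_i=\operatorname{pr}_i(F)$, observe $F\leq\prod_i F_i$, and obtain the converse by inclusion. Your explicit check of the product decomposition is correct; the paper gives the same justification in the paragraph before the proposition. The alternative cube-stabilizer route is also correct but not needed.
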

\begin{proof}
For the forward implication, identify $F$ with a subgroup of $\prod_{i=1}^r C_{W_i}$ using the preceding isomorphism. For each $i$, define the projection $\operatorname{pr}_i:\prod_{j=1}^r C_{W_j}\longrightarrow C_{W_i}$ onto the $i$-th factor, that is, $\operatorname{pr}_i(h_1,\ldots,h_r)=h_i$. Let $F_i=\operatorname{pr}_i(F)$. Then each $F_i$ is finite, and $F\leq\prod_{i=1}^r F_i$.

For the reverse, compose the given embedding with the inclusion $\prod_{i=1}^r F_i \hookrightarrow \prod_{i=1}^r C_{W_i}\cong C_W$.
\end{proof}

\begin{remark}
A finite subgroup of a direct product need not itself be a product of subgroups of the factors. However, a maximal finite subgroup of a finite direct product is exactly a product of maximal finite subgroups of the factors. Indeed, it is contained in the finite product of its coordinate projections, and maximality forces equality and maximality of each projection. The converse follows by projecting a finite supergroup.

If the Coxeter diagram has infinitely many components, the corresponding decomposition of $C_W$ is a restricted direct product: each element has only finitely many nontrivial coordinates. Every finite subgroup uses only finitely many components, so the finite product argument still applies.
\end{remark}

\section{Classification in type A}\label{sec:typeA}
Let $(W,S)$ be a Coxeter system of type $A_{n-1}$, and write $S=\{s_1,\ldots,s_{n-1}\}$, so that $s_i$ and $s_j$ are adjacent in the Coxeter diagram if and only if $|i-j|=1$. We identify $W$ with $S_n$ via the isomorphism $ s_i\mapsto(i\ i+1)$ for $1\leq i\leq n-1$; see \cite[Example~1.2.3 and Proposition~1.5.4]{BjornerBrenti}.

\bigskip

\begin{figure}[htbp]
\centering
\begin{tikzpicture}[coxeter,node distance=6mm]
\node (a) {$s_1$};
\node (b) [right=of a] {$s_2$};
\node (c) [right=of b] {$\cdots$};
\node (d) [right=of c] {$s_{n}$};
\draw (a)--(b)--(c)--(d);
\end{tikzpicture}
\caption{Coxeter diagram of type $A_n$.}
\label{fig:typeA}
\end{figure}

In type $A_{n-1}$, the classification can be obtained without an analysis of cube stabilizers. By Lemma~\ref{lem:pure}, every finite subgroup of $J_n=C_{W(A_{n-1})}$ is a $2$-group and embeds into $S_n$, hence into a Sylow $2$-subgroup of $S_n$. Therefore, it suffices to construct a subgroup of $J_n$ isomorphic to a Sylow $2$-subgroup of $S_n$. We give such a construction using generators corresponding to dyadic intervals.

We use the generators $s_{p,q}$ of $J_n=C_{W(A_{n-1})}$ and the canonical map $\Sigma_n$ introduced in Section~\ref{sec:introduction}. By \eqref{eq:ordinarypresentation}, subgroups supported on disjoint intervals commute. Recall that $\nu_p(n)$ is the exponent of the largest power of $p$ that divides $n$.

\begin{lemma}\label{lem:sylow}
For $a\geq0$, the group $\mathcal W_a$ is a Sylow $2$-subgroup of $S_{2^a}$ and has order $2^{2^a-1}$. If
\[
 n=2^{a_1}+\cdots+2^{a_k},\qquad a_1>\cdots>a_k\geq0,
\]
is the binary expansion, then every Sylow $2$-subgroup of $S_n$ is isomorphic to $\prod_{j=1}^k\mathcal W_{a_j}$.
\end{lemma}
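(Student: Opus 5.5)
We prove the statement by computing orders via Legendre's formula and realizing the recursive wreath structure inside $S_{2^a}$.

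\textbf{Step 1: the order of $\mathcal W_a$.} By the recursion, $|\mathcal W_{a+1}|=2|\mathcal W_a|^2$ with $|\mathcal W_0|=1$. Induction gives $|\mathcal W_a|=2^{2^a-1}$, since $2\cdot 2^{2(2^a-1)}=2^{2^{a+1}-1}$.

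\textbf{Step 2: $\nu_2$ of factorials.} By Legendre's formula, $\nu_2(m!)=m-s_2(m)$, where $s_2$ is the binary digit sum. Hence $\nu_2((2^a)!)=2^a-1$, which matches Step 1. For $n=\sum_j 2^{a_j}$ we get $\nu_2(n!)=n-k=\sum_j(2^{a_j}-1)$.

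\textbf{Step 3: embedding $\mathcal W_a$ into $S_{2^a}$.} We construct the embedding inductively. Let $\mathcal W_0$ act trivially on one point. Given a faithful action of $\mathcal W_a$ on $[1,2^a]$, define an action of $\mathcal W_{a+1}$ on $[1,2^{a+1}]$ as follows:
\begin{itemize}
\item the first factor acts on $[1,2^a]$;
\item the second factor acts on $[2^a+1,2^{a+1}]$ through the shift by $2^a$;
\item the generator of $\mathbb Z/2\mathbb Z$ acts as the involution $i\mapsto i+2^a \pmod{2^{a+1}}$, which swaps the two halves.
\end{itemize}
The swap conjugates the first copy onto the second, so this defines a homomorphism from the semidirect product. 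It is injective: an element $(x,y,\varepsilon)$ acting trivially must preserve the halves, so $\varepsilon=0$, and then $x=y=1$ by faithfulness. The image has order $2^{2^a-1}$, which is the full $2$-part of $|S_{2^a}|$ by Step 2. Hence it is a Sylow $2$-subgroup.

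\textbf{Step 4: Sylow subgroups of $S_n$.} Partition $[1,n]$ into consecutive blocks of sizes $2^{a_1},\dots,2^{a_k}$. Let a copy of $\mathcal W_{a_j}$ act on the $j$-th block via Step 3. These actions have disjoint supports, so they commute and their intersections are trivial. Together they give an embedding of $\prod_j\mathcal W_{a_j}$ into $S_n$. Its order is $2^{\sum_j(2^{a_j}-1)}=2^{\nu_2(n!)}$, so it is a Sylow $2$-subgroup of $S_n$. By Sylow's theorem all Sylow $2$-subgroups are conjugate, hence isomorphic to this product.

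There is no real obstacle. The only points requiring care are the Legendre computation and the faithfulness of the swap construction.
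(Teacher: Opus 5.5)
Your proposal is correct and follows the paper's proof essentially step for step: the order recurrence, the realization of $\mathcal W_a$ on two halves with their exchange, Legendre's formula to match the $2$-parts of $|S_{2^a}|$ and $|S_n|$, and the block-diagonal product, where you also state explicitly the Sylow conjugacy that the paper leaves implicit for the word ``every.'' One small index slip: in Step~3 the image of $\mathcal W_{a+1}$ in $S_{2^{a+1}}$ has order $2^{2^{a+1}-1}$, not $2^{2^a-1}$.
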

\begin{proof}
The recurrence $|\mathcal W_a|=2|\mathcal W_{a-1}|^2$ gives the order formula. Realize $\mathcal W_a$ by the independent actions of two copies of $\mathcal W_{a-1}$ on two halves of a set of size $2^a$, together with their exchange. By Legendre's formula, we have
\[
 \nu_2((2^a)!)=2^{a-1}+\cdots+1=2^a-1.
\]
Thus, $|\mathcal W_a|$ is the order of a Sylow $2$-subgroup of $S_{2^a}$, and hence $\mathcal W_a$ is a Sylow $2$-subgroup of $S_{2^a}$.

Now divide $\{1,\ldots,n\}$ into disjoint consecutive blocks of sizes $2^{a_1},\ldots,2^{a_k}$. Let the copy of $\mathcal W_{a_i}$ act on the block of size $2^{a_i}$. Since these blocks are disjoint, we obtain an embedding
\[
\mathcal W_{a_1}\times\cdots\times \mathcal W_{a_k}\hookrightarrow S_n.
\]
Moreover,
\[
|\mathcal W_{a_1}\times\cdots\times \mathcal W_{a_k}|
=
\prod_{i=1}^k |\mathcal W_{a_i}|
=
\prod_{i=1}^k 2^{2^{a_i}-1}
=
2^{\sum_{i=1}^k(2^{a_i}-1)}
=
2^{n-k}.
\]
By Legendre's formula again, we have $\nu_2(n!)=n-k$. It follows that $\mathcal W_{a_1}\times\cdots\times \mathcal W_{a_k}$ has the order of a Sylow $2$-subgroup of $S_n$. Therefore, $\mathcal W_{a_1}\times\cdots\times \mathcal W_{a_k}$ is a Sylow $2$-subgroup of $S_n$.
\end{proof}

Let $\Omega$ be a consecutive interval of $2^a$ integers, where $a\geq0$. We call $\Omega$ and all intervals obtained by repeatedly dividing it into two consecutive intervals of equal size the \emph{dyadic subintervals} of $\Omega$. For $X\subseteq\{1,\ldots,n\}$, let $S_X\leq S_n$ be the subgroup fixing every element outside $X$.

\begin{lemma}\label{lem:dyadic}
Let $\Omega=\{p,\ldots,q\}\subseteq\{1,\ldots,n\}$, where $q-p+1=2^a$ for some $a\geq0$. Then there exists a subgroup $L_\Omega \leq \langle s_{i,j}:p\leq i<j\leq q\rangle$ isomorphic to $\mathcal W_a$ such that the restriction of $\Sigma_n$ to $L_\Omega$ is injective and $\Sigma_n(L_\Omega)$ is a Sylow $2$-subgroup of $S_\Omega$.
\end{lemma}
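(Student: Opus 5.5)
I would construct $L_\Omega$ by induction on $a$, using the dyadic structure of $\Omega$. For $a=0$, take $L_\Omega=\{1\}$; then $S_\Omega$ is trivial and everything is immediate. For $a=1$, take $L_\Omega=\langle s_{p,p+1}\rangle$; the element $s_{p,p+1}$ has order two and maps to the transposition $(p\ p+1)$, which generates $S_\Omega\cong\mathbb Z/2\mathbb Z$.

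For the inductive step with $a\geq1$, I would split $\Omega$ into its two dyadic halves $\Omega'=\{p,\ldots,m\}$ and $\Omega''=\{m+1,\ldots,q\}$, where $m=p+2^{a-1}-1$. By induction there are subgroups $L_{\Omega'}\leq\langle s_{i,j}:p\le i<j\le m\rangle$ and $L_{\Omega''}\leq\langle s_{i,j}:m+1\le i<j\le q\rangle$ with the required properties. I would set
\[
 L_\Omega=\langle L_{\Omega'},L_{\Omega''},s_{p,q}\rangle .
\]
By the commuting relation in \eqref{eq:ordinarypresentation}, $L_{\Omega'}$ and $L_{\Omega''}$ commute elementwise. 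Moreover, $\Sigma_n$ is injective on $\langle L_{\Omega'},L_{\Omega''}\rangle$, because it maps this subgroup into $S_{\Omega'}\times S_{\Omega''}$ and is injective on each factor. Hence $N:=\langle L_{\Omega'},L_{\Omega''}\rangle\cong L_{\Omega'}\times L_{\Omega''}\cong\mathcal W_{a-1}^2$.

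The key point is that conjugation by $s_{p,q}$ swaps $L_{\Omega'}$ and $L_{\Omega''}$. By the nesting relation, $s_{p,q}s_{i,j}s_{p,q}=s_{p+q-j,p+q-i}$ for $[i,j]\subseteq[p,q]$, and the reflection $x\mapsto p+q-x$ exchanges $\Omega'$ and $\Omega''$. This only shows $s_{p,q}L_{\Omega'}s_{p,q}\subseteq\langle s_{i,j}:[i,j]\subseteq\Omega''\rangle$, not that it equals $L_{\Omega''}$. So I would strengthen the induction: choose $L_{\Omega''}$ to be the reflected copy of $L_{\Omega'}$, namely $L_{\Omega''}:=s_{p,q}L_{\Omega'}s_{p,q}$. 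The inductive hypothesis then transfers, since conjugation is an isomorphism and $\Sigma_n(s_{p,q})$ conjugates $S_{\Omega'}$ onto $S_{\Omega''}$, carrying Sylow subgroups to Sylow subgroups, and the generators land in the subgroup generated by the $s_{i,j}$ supported in $\Omega''$. This choice is the step I expect to need care, but it removes the apparent obstacle. With it, $s_{p,q}$ normalizes $N$, so $L_\Omega=N\langle s_{p,q}\rangle$ and $|L_\Omega|\le 2|N|$.

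Finally, $\Sigma_n(L_\Omega)$ contains $\Sigma_n(N)$, which has order $|\mathcal W_{a-1}|^2$ and fixes the partition $\{\Omega',\Omega''\}$ blockwise, together with $\Sigma_n(s_{p,q})$, which swaps the two blocks and so does not lie in $\Sigma_n(N)$. Therefore
\[
 |\Sigma_n(L_\Omega)|\ge 2|\mathcal W_{a-1}|^2=2^{2^a-1}\ge|L_\Omega| ,
\]
which forces $\Sigma_n|_{L_\Omega}$ to be injective. By Lemma~\ref{lem:sylow}, the image then has the order of a Sylow $2$-subgroup of $S_\Omega$, so it is one. The image is also the wreath product $(\mathcal W_{a-1}\times\mathcal W_{a-1})\rtimes\mathbb Z/2\mathbb Z$, with the swap acting as the factor exchange, which gives $L_\Omega\cong\mathcal W_a$.
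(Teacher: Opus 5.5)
Your construction is the same as the paper's: it also takes the reflected copy $L_+=s_{p,q}L_-s_{p,q}$ and sets $L_\Omega=\langle L_-L_+,s_{p,q}\rangle$. Your argument is correct; where it differs is in how injectivity of $\Sigma_n$ on $L_\Omega$ is obtained.

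The paper first identifies $L_\Omega$ abstractly with $\mathcal W_a$, using $L_-\cap L_+=\{1\}$ and $s_{p,q}\notin L_-L_+$, both read off from images in $S_n$. It then invokes Lemma~\ref{lem:pure}: finite subgroups meet the pure cactus group trivially. That lemma rests on the cube complex $\mathscr A_W$ and the Bruhat--Tits fixed point theorem.

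You instead squeeze orders,
$|L_\Omega|\le 2|N|=2|\Sigma_n(N)|\le|\Sigma_n(L_\Omega)|\le|L_\Omega|$.
This gives injectivity directly, and you then read off the isomorphism type of $L_\Omega$ from its image.

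Your version is therefore self-contained in the presentation \eqref{eq:ordinarypresentation} and the symmetric group. The realization direction of Theorem~\ref{thm:mainA} then needs no cubical geometry. The paper's version is shorter only because Lemma~\ref{lem:pure} is already available.

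Your separate case $a=1$ is redundant, since it is the inductive step from $a=0$.
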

\begin{proof}
We proceed by induction on $a$. For $a=0$, take $L_\Omega=\{1\}$. Suppose that $a\geq1$, and let $\Omega_-$ and $\Omega_+$ be the left and right halves of $\Omega$. Choose $L_-$ satisfying the assertion for $\Omega_-$, and let $L_+=s_{p,q}L_-s_{p,q}$. The relation $s_{p,q}s_{i,j}s_{p,q}=s_{p+q-j,p+q-i}$ shows that $L_+$ is contained in the subgroup generated by the $s_{i,j}$ with $\{i,\ldots,j\}\subseteq\Omega_+$.

Since $\Omega_-$ and $\Omega_+$ are disjoint, the defining relations imply that every element of $L_-$ commutes with every element of $L_+$. Their images $\Sigma_n(L_-)$ and $\Sigma_n(L_+)$ fix every element outside $\Omega_-$ and $\Omega_+$, respectively, and hence have trivial intersection. Since $\Sigma_n$ is injective on $L_-$ by induction, we also have $L_-\cap L_+=\{1\}$. Therefore, $L_-L_+\cong\mathcal W_{a-1}\times\mathcal W_{a-1}$. Conjugation by $s_{p,q}$ exchanges the two factors. Moreover, $s_{p,q}\notin L_-L_+$: every element of $\Sigma_n(L_-L_+)$ preserves each half, whereas $\Sigma_n(s_{p,q})$ exchanges them. Since $s_{p,q}^2=1$, it follows that
\[
 L_\Omega:=\langle L_-L_+,s_{p,q}\rangle
 \cong(\mathcal W_{a-1}\times\mathcal W_{a-1})
       \rtimes\mathbb Z/2\mathbb Z
 =\mathcal W_a.
\]

In particular, $L_\Omega$ is finite, so Lemma~\ref{lem:pure} implies that $\Sigma_n$ is injective on $L_\Omega$. Thus, $\Sigma_n(L_\Omega)\leq S_\Omega$ has order $|\mathcal W_a|$, which is the order of a Sylow $2$-subgroup of $S_\Omega$ by Lemma~\ref{lem:sylow}. This proves the assertion.
\end{proof}

We will also use this construction in subdiagrams of type $A$. Let $T=\{t_1,\ldots,t_{m-1}\}\subseteq S$ be a subdiagram of type $A_{m-1}$, with edges $\{t_i,t_{i+1}\}$ for $1\leq i\leq m-2$. The identification $J_m\cong C_{W_T}$ and Proposition~\ref{prop:parabolic} give an embedding
\[
 \iota:J_m\hookrightarrow C_W,
 \qquad
 s_{p,q}\longmapsto
 \sigma_{\{t_p,\ldots,t_{q-1}\}}.
\]
Identify $W_T$ with $S_m$ by sending $t_i$ to $(i\ i+1)$. Under this identification, $g_W(\iota(x))$ corresponds to $\Sigma_m(x)$ for every $x\in J_m$. Thus, the subgroups constructed in Lemma~\ref{lem:dyadic} can be embedded into $C_W$ through $\iota$, with their generators given by the displayed correspondence.

Now we are ready to prove Theorem~A.

\begin{theorem}\label{thm:typeA}
For $n\geq2$ and a finite group $F$, the following are equivalent:
\begin{enumerate}
\item $F\hookrightarrow J_n$;
\item $F\hookrightarrow P_n$;
\item $F\hookrightarrow\prod_j\mathcal W_{a_j}$, where $n=\sum_j2^{a_j}$ is the binary expansion with distinct exponents.
\end{enumerate}
\end{theorem}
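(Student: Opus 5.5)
The plan is to prove the cycle (1)$\Rightarrow$(2)$\Rightarrow$(3)$\Rightarrow$(1), using only Lemma~\ref{lem:pure}, Lemma~\ref{lem:sylow}, and Lemma~\ref{lem:dyadic}. No analysis of cube stabilizers is needed.

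For (1)$\Rightarrow$(2), let $F\leq J_n$ be finite. By Lemma~\ref{lem:pure}, $F$ is a $2$-group and $\Sigma_n=g_W$ restricts injectively to $F$. Hence $\Sigma_n(F)\cong F$ is a $2$-subgroup of $S_n$. By Sylow's theorem it is contained in a Sylow $2$-subgroup of $S_n$, and all Sylow $2$-subgroups are conjugate to the fixed $P_n$, so $F$ embeds into $P_n$.

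For (2)$\Rightarrow$(3), Lemma~\ref{lem:sylow} gives $P_n\cong\prod_j\mathcal W_{a_j}$ directly, so any embedding into $P_n$ composes with this isomorphism.

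The only constructive step is (3)$\Rightarrow$(1). Partition $\{1,\ldots,n\}$ into consecutive disjoint blocks $\Omega_1,\ldots,\Omega_k$ of sizes $2^{a_1},\ldots,2^{a_k}$. Lemma~\ref{lem:dyadic} then provides subgroups $L_{\Omega_j}\cong\mathcal W_{a_j}$, each generated inside $\langle s_{p,q}:[p,q]\subseteq\Omega_j\rangle$. Since the blocks are pairwise disjoint intervals, the commutation relation in \eqref{eq:ordinarypresentation} shows that the $L_{\Omega_j}$ commute pairwise elementwise. Multiplication therefore gives a homomorphism $\prod_j L_{\Omega_j}\to J_n$. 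The main point to check is that this map is injective. Compose it with $\Sigma_n$: the images $\Sigma_n(L_{\Omega_j})$ lie in $S_{\Omega_j}$, which have disjoint supports, so $\Sigma_n$ maps the product injectively onto $\prod_j\Sigma_n(L_{\Omega_j})$, using that $\Sigma_n$ is injective on each factor by Lemma~\ref{lem:dyadic}. The composite is injective, hence so is the multiplication map, and its image is a subgroup of $J_n$ isomorphic to $\prod_j\mathcal W_{a_j}$. Any $F$ embedding into that product therefore embeds into $J_n$.

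The cases $k=1$ and factors with $a_j=0$ need no separate treatment, since the trivial factor $L_{\Omega}=\{1\}$ causes no issue. I expect no real obstacle. The only delicate point is the injectivity of the product map, and the disjoint-support argument through $\Sigma_n$ settles it.
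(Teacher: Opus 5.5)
Your proposal is correct and follows the paper's proof essentially step for step. You use Lemma~\ref{lem:pure} plus Sylow's theorem for (1)$\Rightarrow$(2) and Lemma~\ref{lem:sylow} for (2)$\Rightarrow$(3). For (3)$\Rightarrow$(1) you take commuting dyadic subgroups from Lemma~\ref{lem:dyadic} on consecutive disjoint blocks and check injectivity of the multiplication map through $\Sigma_n$ using disjoint supports, exactly as the paper does.
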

\begin{proof}
If $F\leq J_n$ is finite, Lemma~\ref{lem:pure} identifies it with a $2$-subgroup of $S_n$. Such a subgroup lies in a Sylow $2$-subgroup, proving $(1)\Rightarrow(2)$. Lemma~\ref{lem:sylow} gives $(2)\Leftrightarrow(3)$.

For the converse, partition $\{1,\ldots,n\}$ into consecutive intervals $B_1,\ldots,B_k$ with $|B_j|=2^{a_j}$. By Lemma~\ref{lem:dyadic}, for each $j$ there is a subgroup $L_j\cong\mathcal W_{a_j}$ generated by elements $s_{p,q}$ with $\{p,\ldots,q\}\subseteq B_j$. Since generators corresponding to disjoint intervals commute, multiplication defines a homomorphism
\[
 \Phi:\prod_{j=1}^k L_j\longrightarrow J_n,
 \qquad
 (\ell_1,\ldots,\ell_k)\longmapsto\ell_1\cdots\ell_k.
\]

To prove injectivity, suppose that $\ell_1\cdots\ell_k=1$, where $\ell_j\in L_j$. Applying $\Sigma_n$ gives $\Sigma_n(\ell_1)\cdots\Sigma_n(\ell_k)=1$. For each $j$, every $\Sigma_n(\ell_i)$ with $i\ne j$ fixes every element of $B_j$. Thus, the equality implies that $\Sigma_n(\ell_j)$ fixes every element of $B_j$. It also fixes every element outside $B_j$, so $\Sigma_n(\ell_j)=1$. Since the restriction of $\Sigma_n$ to $L_j$ is injective by Lemma~\ref{lem:dyadic}, we obtain $\ell_j=1$ for every $j$. Hence, $\Phi$ is injective, proving that $\prod_{j=1}^k\mathcal W_{a_j}$ embeds into $J_n$. This proves $(3)\Rightarrow(1)$.
\end{proof}

\section{Classification in type B}\label{sec:typeB}
We realize $W(B_n)$ as the group of signed permutations, that is, the bijections $w$ of $\{\pm 1,\ldots,\pm n\}$ satisfying $w(-i)=-w(i)$ for $1\leq i\leq n$; see \cite[Section~8.1 and Proposition~8.1.3]{BjornerBrenti}. We identify $S_n$ with the subgroup preserving $\{1,\ldots,n\}$. Let $V_n$ be the subgroup of sign changes, consisting of those $w$ for which $w(i)\in\{i,-i\}$ for every $i$. Then
\[
 W(B_n)=V_n\rtimes S_n,
 \qquad
 V_n\cong(\mathbb Z/2\mathbb Z)^n,
\]
where $S_n$ acts by permuting the $n$ sign choices. The projection onto $S_n$ is given by
\[
 \pi:W(B_n)\longrightarrow S_n,
 \qquad
 \pi(w)(i)=|w(i)|.
\]

Write $S=\{s_1,\ldots,s_n\}$ for the set of generators. For $n\geq2$, we number them so that the Coxeter diagram has precisely the edges $\{s_i,s_{i+1}\}$, $1\leq i\leq n-1$, with label $4$ for $i=1$ and label $3$ for $i\geq2$.

\begin{figure}[htbp]
\centering
\begin{tikzpicture}[coxeter]
\node (a) {$s_1$};
\node (b) [right=of a] {$s_2$};
\node (c) [right=of b] {$s_3$};
\node (d) [right=of c] {$\cdots$};
\node (e) [right=of d] {$s_n$};

\draw (a.east)--node[above] {$4$} (b.west);
\draw (b.east)--(c.west);
\draw (c.east)--(d.west);
\draw (d.east)--(e.west);
\end{tikzpicture}
\caption{Coxeter diagram of type $B_n$.}
\label{fig:typeB}
\end{figure}

Under the above realization, $s_1$ interchanges $1$ and $-1$, while, for $2\leq i\leq n$, $s_i$ interchanges $i-1$ with $i$ and $-(i-1)$ with $-i$. Each generator fixes all remaining elements.

We denote $T_b=\{s_1,\ldots,s_b\}$ for $1\leq b\leq n$ and $A_{p,q}=\{s_{p+1},\ldots,s_q\}$ for $1\leq p<q\leq n$. Every nonempty connected subset of $S$ has one of these forms. To $A_{p,q}$ we associate the interval $\{p,\ldots,q\}$ of integers.

The longest element $\omega_{T_b}$ interchanges $i$ and $-i$ for $1\leq i\leq b$ and fixes all remaining elements. This follows from the length formula in \cite[Proposition~8.1.1 and equation~(8.2)]{BjornerBrenti}. In particular, $\omega_{T_b}$ is central in $W_{T_b}$.

The longest element $\omega_{A_{p,q}}$ belongs to the subgroup $S_n$ and  reverses $\{p,\ldots,q\}$, fixing every element of $\{1,\ldots,n\}$ outside this interval; see \cite[Section~2.3, p.~37]{BjornerBrenti}.

For $P\leq S_n$, let $c(P)$ be the number of its orbits, including fixed points, and let
\[
 E_B(P)=\{\mathrm d_E:E\subseteq\{1,\ldots,n\},\ p(E)=E\text{ for every }p\in P\}\leq V_n,
\]
where $\mathrm d_E\in V_n$ is defined, for $1\leq i\leq n$, by
\[
 \mathrm d_E(i)=
 \begin{cases}
  -i,&i\in E,\\
  i,&i\notin E.
 \end{cases}
\]
Then $E_B(P)\cong(\mathbb{Z}/2\mathbb{Z})^{c(P)}$, and $P$ centralizes $E_B(P)$.

\begin{theorem}\label{thm:typeB}
Let $n\geq1$ and let $F$ be finite. The following are equivalent:
\begin{enumerate}
\item $F\hookrightarrow C_{W(B_n)}$;
\item $F\hookrightarrow(\mathbb{Z}/2\mathbb{Z})^{c(P)}\times P$ for some $2$-subgroup $P\leq S_n$;
\item $F\hookrightarrow H$ for some $H\in\mathscr B_n$.
\end{enumerate}
\end{theorem}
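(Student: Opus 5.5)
We prove $(1)\Rightarrow(2)\Rightarrow(3)\Rightarrow(1)$, using the cube-stabilizer description of Theorem~\ref{thm:criterion} for the first implication and explicit constructions for the last.

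\textbf{Step $(1)\Rightarrow(2)$.} By Theorem~\ref{thm:criterion}, it suffices to embed each $H_{\mathcal A}$ into some $(\mathbb Z/2\mathbb Z)^{c(P)}\times P$. A compatible family $\mathcal A$ in type $B_n$ consists of members of the form $T_b$ and intervals $A_{p,q}$. Compatibility forces all $T_b$'s in $\mathcal A$ to be nested, since any two of them contain $s_1$ and so cannot be orthogonal. Let $T_{b_1}\subsetneq\cdots\subsetneq T_{b_r}$ be these members. The type-$A$ members give a laminar family of intervals: each interval either lies in some $\{b_{i-1}+1,\ldots,b_i\}$-type window or sits inside a $T_{b_i}$ block.

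Take $w_{\mathcal U}\in H_{\mathcal A}$ and set $P=\pi(H_{\mathcal A})\leq S_n$, which is a $2$-group. Each $\omega_{T_b}$ is the sign change $\mathrm d_{\{1,\ldots,b\}}$, and each $\omega_{A_{p,q}}$ is a reversal lying in $S_n$. Since sign changes are normalized by $S_n$, we get $w_{\mathcal U}=\mathrm d_E\cdot\pi(w_{\mathcal U})$, where $E$ is a union of images of initial segments.

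The key claim is that $\mathrm d_E\in E_B(P)$, i.e.\ $E$ is $P$-invariant. We plan to prove this via the criterion: $w_{\mathcal U}$ must permute $\mathcal P_{\mathcal A}$, so it permutes the subgroups $W_{T_{b_i}}$. Each $W_{T_b}$ is the stabilizer of the set $\{\pm(b+1),\ldots,\pm n\}$ pointwise, and $W_{T_b}\not\cong W_{A_{p,q}}$ because the former contains central sign changes. Hence $w_{\mathcal U}$ fixes each $W_{T_{b_i}}$, so $\pi(w_{\mathcal U})$ preserves each window $\{1,\ldots,b_i\}$. Since $E$ is a union of such windows, intersected with images under reversals inside the windows, it is $\pi$-invariant for every element of $P$.

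Then $H_{\mathcal A}\leq E_B(P)\rtimes P=E_B(P)\times P$, because $P$ centralizes $E_B(P)$. This step, namely the precise invariance of $E$ under all of $P$ and not just under $\pi(w_{\mathcal U})$, is the main obstacle. To address it, we would first normalize $\mathcal U$ by grouping the factors $\omega_{T_{b_i}}$ and showing that $E$ is a union of full windows differences $\{b_{i-1}+1,\ldots,b_i\}$. Each such set is $P$-invariant because $P$ preserves every window.

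\textbf{Step $(2)\Rightarrow(3)$.} We may enlarge $P$ to a Sylow $2$-subgroup of $\prod_O S_O$ over its orbits $O$; this only refines orbits in a controlled way. So take $P=\prod_O P_O$ with $P_O$ a $2$-subgroup transitive on $O$. Then $|O|=2^a$ and $P_O\leq\mathcal W_a$ by Lemma~\ref{lem:sylow}, since a transitive $2$-group has orbit size a power of $2$. Hence
\[
(\mathbb Z/2\mathbb Z)^{c(P)}\times P\hookrightarrow\prod_O(\mathbb Z/2\mathbb Z\times\mathcal W_{a_O}),
\]
and the right-hand side lies in $\mathscr B_n$.

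\textbf{Step $(3)\Rightarrow(1)$.} Given $\sum_j 2^{a_j}=n$, partition $\{1,\ldots,n\}$ into consecutive blocks $B_j$ of size $2^{a_j}$, with $B_1$ starting at $1$. On block $B_1=\{1,\ldots,2^{a_1}\}$, use $\sigma_{T_{2^{a_1}}}$, which is central in $C_{W_{T_{2^{a_1}}}}$, together with a dyadic subgroup $L\cong\mathcal W_{a_1}$ built by Lemma~\ref{lem:dyadic} from the type-$A$ subdiagram $\{s_2,\ldots,s_{2^{a_1}}\}$ via $\iota$. For each later block $\{p+1,\ldots,q\}$ we need an extra commuting involution. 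For this we take the conjugate of $\sigma_{T_q}$ by $\sigma_{T_p}$-type elements, or more simply $\sigma_{T_q}\sigma_{T_p}$, whose image $\mathrm d_{B_j}$ commutes with the block's dyadic group. We then verify that all these elements lie in a single cube stabilizer $K_{\mathcal A}$, where $\mathcal A$ consists of the nested chain $T_{b_j}$ together with the dyadic intervals. Using Theorem~\ref{thm:criterion} and the injectivity of $g_W$ (Lemma~\ref{lem:pure}), the subgroup they generate maps isomorphically onto $\prod_j(\langle\mathrm d_{B_j}\rangle\times\mathcal W_{a_j})$.
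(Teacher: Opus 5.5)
Your proposal is correct and follows the paper's overall architecture: the windows $\Lambda_i$ cut out by the prefixes $T_b\in\mathcal A$, the target $E_B(P)\times P$, restriction to orbits, and the elements $\sigma_{T_q}\sigma_{T_p}$ together with dyadic subgroups. Two steps are argued differently, and a few justifications need tightening.

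\textbf{$(1)\Rightarrow(2)$.} What you call the main obstacle disappears once you use the fact you allude to with ``reversals inside the windows''. For $A_{p,q}\in\mathcal A$, the interval $\{p,\ldots,q\}$ cannot cross any $b_i$, since $A_{p,q}$ would then be neither nested in nor orthogonal to $T_{b_i}$. Hence every $\omega_I$ with $I\in\mathcal A$ lies in $D_\Lambda\times S_\Lambda$, and so does every vertex of $Q_{\mathcal A}$. It follows that $P\le S_\Lambda$ preserves each window and $D_\Lambda\le E_B(P)$. The paper argues exactly this way, without Theorem~\ref{thm:criterion}. If you keep your detour through the criterion, it needs repair. $W_{T_1}$ and $W_{A_{p,p+1}}$ both have order two, so the non-isomorphism claim is false. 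The correct invariant is that $W_{T_b}$ meets the normal subgroup $V_n=\ker\pi$ nontrivially, while $W_{A_{p,q}}\le S_n$ does not. Distinct orders then force each $W_{T_{b_i}}$ to be fixed.

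\textbf{$(2)\Rightarrow(3)$.} No enlargement of $P$ is needed; restricting $P$ to its orbits suffices. Also, enlarging a group can only merge orbits, never refine them. Here the orbits happen to be unchanged because $\mathcal W_a$ is transitive.

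\textbf{$(3)\Rightarrow(1)$.} Here your route is genuinely different. You place all generators in one cube stabilizer $K_{\mathcal A}$, where $\mathcal A$ is the chain of the $T_{q_j}$ together with all dyadic $A_{r,s}$. Injectivity of $g_{W(B_n)}$ on this finite group then transfers the visible product structure from $W(B_n)$.
\begin{itemize}
\item What this buys you: you never check commutation relations in $C_{W(B_n)}$, and you learn that the realized group is a full cube stabilizer of order $2^{|\mathcal A|}=2^n$, as in \eqref{eq:fullcubeorder}.
\item What it costs: you must verify the criterion for each member of $\mathcal A$. Dyadic reversals permute dyadic subintervals, and $\omega_{T_b}$ is central in $W_{T_b}$.
\end{itemize}
The paper instead derives the needed commutations directly from the nesting and orthogonality relations. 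It then proves injectivity coordinatewise via $\pi\circ g_{W(B_n)}$.
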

\begin{proof}
If $n=1$, then $C_{W(B_1)}\cong\mathbb Z/2\mathbb Z$, and all three conditions are equivalent to $F\hookrightarrow\mathbb Z/2\mathbb Z$. We therefore assume that $n\geq2$.

\emph{$(1)\Rightarrow(2)$.} Identify $F$ with its image in $C_{W(B_n)}$. By the argument in Lemma~\ref{lem:pure} regarding fixed points, after conjugation we may assume that $F$ stabilizes $Q_{\mathcal A}$ for some compatible family $\mathcal A$. Since $1\in V(Q_{\mathcal A})$, every element of $F$ is a vertex of $Q_{\mathcal A}$. Moreover, Lemma~\ref{lem:pure} shows that $F$ is a $2$-group and that $g_{W(B_n)}$ is injective on $F$.

Write
\[
 \{b:1\leq b<n,\ T_b\in\mathcal A\}
 =\{b_1<\cdots<b_t\},
 \qquad b_0=0,\quad b_{t+1}=n,
\]
and let
\[
 \Lambda_i=\{b_{i-1}+1,\ldots,b_i\},
 \qquad 1\leq i\leq t+1.
\]
For every $A_{p,q}\in\mathcal A$, the interval $\{p,\ldots,q\}$ is contained in one $\Lambda_i$. Otherwise, $p\leq b_i<q$ for some $i$. Then $A_{p,q}$ and $T_{b_i}$ are not nested. They are not orthogonal either, since $s_{b_i}\in T_{b_i}$ is adjacent to $s_{b_i+1}\in A_{p,q}$. This contradicts compatibility.

Let
\[
 D_\Lambda=
 \langle\mathrm d_{\Lambda_1},\ldots,\mathrm d_{\Lambda_{t+1}}\rangle
 \leq V_n,
 \qquad
 S_\Lambda=\prod_{i=1}^{t+1}S_{\Lambda_i}\leq S_n.
\]
Every element of $S_\Lambda$ preserves each $\Lambda_i$ and hence commutes with every element of $D_\Lambda$. Thus, their product is the subgroup $D_\Lambda\times S_\Lambda$ of $W(B_n)$. If $A_{p,q}\in\mathcal A$, then $\omega_{A_{p,q}}$ belongs to $S_\Lambda$. If $T_b\in\mathcal A$, then $\{1,\ldots,b\}$ is a union of the intervals $\Lambda_i$, so $\omega_{T_b}=\mathrm d_{\{1,\ldots,b\}}$ belongs to $D_\Lambda$. Consequently, Lemma~\ref{lem:vertices} gives $g_{W(B_n)}(V(Q_{\mathcal A})) \subseteq D_\Lambda\times S_\Lambda$.

Let $P=\pi(g_{W(B_n)}(F))$. Then $P$ is a $2$-subgroup of $S_\Lambda$, so every $P$-orbit is contained in one $\Lambda_i$. Hence, every union of the $\Lambda_i$ is $P$-invariant, and $D_\Lambda\leq E_B(P)$ by the definition of $E_B(P)$. It follows that
\[
 F\cong g_{W(B_n)}(F)
 \leq D_\Lambda\times P
 \leq E_B(P)\times P
 \cong(\mathbb{Z}/2\mathbb{Z})^{c(P)}\times P.
\]
This proves $(1)\Rightarrow(2)$.

\emph{$(2)\Rightarrow(3)$.} Let $P$ be as in (2), and let $\Omega_1,\ldots,\Omega_k$ be its orbits on $\{1,\ldots,n\}$. Since $P$ is a $2$-group, the orbit-stabilizer theorem gives $|\Omega_j|=2^{a_j}$ for some $a_j\geq0$. In particular, $n=\sum_{j=1}^k2^{a_j}$. For each $j$, let $\rho_j:P\to S_{\Omega_j}$ be the restriction homomorphism. The product of these homomorphisms is injective, since the orbits partition $\{1,\ldots,n\}$. Each $\rho_j(P)$ is a $2$-subgroup of $S_{\Omega_j}\cong S_{2^{a_j}}$, so it embeds into $\mathcal W_{a_j}$ by Lemma~\ref{lem:sylow}. Since $k=c(P)$, we obtain
\[
 (\mathbb{Z}/2\mathbb{Z})^{c(P)}\times P
 \hookrightarrow\prod_{j=1}^k(\mathbb{Z}/2\mathbb{Z}\times\mathcal W_{a_j})
 \in\mathscr B_n.
\]
This proves $(2)\Rightarrow(3)$.

\emph{$(3)\Rightarrow(1)$.} It suffices to realize every group in $\mathscr B_n$ as a subgroup of $C_{W(B_n)}$. Fix $a_1,\ldots,a_k\geq0$ with $n=\sum_{j=1}^k2^{a_j}$, and partition $\{1,\ldots,n\}$ into consecutive intervals $\Omega_j=\{p_j,\ldots,q_j\}$, where $|\Omega_j|=2^{a_j}$. For $a_j\geq1$, let $L_{\Omega_j}\leq J_n$ be the subgroup constructed in Lemma~\ref{lem:dyadic}. The subset $\{s_2,\ldots,s_n\}$ of the type $B_n$ diagram has type $A_{n-1}$. Hence, Proposition~\ref{prop:parabolic} gives an embedding
\[
 \iota:J_n\hookrightarrow C_{W(B_n)},
 \qquad
 s_{r,s}\longmapsto\sigma_{A_{r,s}}.
\]
Define $L_j=\iota(L_{\Omega_j})$. Then $L_j\leq\langle\sigma_{A_{r,s}}:p_j\leq r<s\leq q_j\rangle$ and $L_j\cong\mathcal W_{a_j}$. Under the identification $S_n\leq W(B_n)$, we have $g_{W(B_n)}(\iota(x))=\Sigma_n(x)$ for each $x\in J_n$. Thus, Lemma~\ref{lem:dyadic} implies that $g_{W(B_n)}$ is injective on $L_j$ and that $g_{W(B_n)}(L_j)\leq S_{\Omega_j}$. For $a_j=0$, set $L_j=\{1\}$.

Let $z_j=\sigma_{T_{q_j}}\sigma_{T_{p_j-1}}$ with convention $\sigma_{T_0}:=1$. Since $\omega_{T_b}$ is central in $W_{T_b}$, the nesting relations imply that the generators $\sigma_{T_b}$ commute with one another. Therefore, $z_j^2=1$, and $g_{W(B_n)}(z_j)=\mathrm d_{\Omega_j}$.

For each positive $b\in\{p_i-1,q_i\}$, the partition implies that $A_{r,s}$ is contained in or orthogonal to $T_b$ whenever $p_j\leq r<s\leq q_j$. The defining relations, together with the centrality of $\omega_{T_b}$ in $W_{T_b}$, show that $\sigma_{T_b}$ commutes with every such $\sigma_{A_{r,s}}$. Hence, $z_i$ commutes with every element of $L_j$ for all $i,j$. The subgroups $L_i$ and $L_j$ also commute for $i\ne j$, since their indexing subsets are orthogonal, and the elements $z_i$ commute with one another.

Consequently, multiplication defines a homomorphism
\[
 \Phi:\prod_{j=1}^k(\mathbb{Z}/2\mathbb{Z}\times L_j)
 \longrightarrow C_{W(B_n)},
 \qquad
 ((\varepsilon_j,\ell_j))_{j=1}^k
 \longmapsto\prod_{j=1}^k z_j^{\varepsilon_j}\ell_j,
\]
where $\varepsilon_j\in\{0,1\}$. Suppose that $\prod_{j=1}^k z_j^{\varepsilon_j}\ell_j=1$. Applying $\pi\circ g_{W(B_n)}$ gives $\prod_{j=1}^k g_{W(B_n)}(\ell_j)=1$. Each $g_{W(B_n)}(\ell_j)$ fixes every element outside $\Omega_j$. Restricting the equality to each $\Omega_j$ gives $g_{W(B_n)}(\ell_j)=1$, and injectivity on $L_j$ implies $\ell_j=1$. Applying $g_{W(B_n)}$ to the remaining equality gives $\prod_{j=1}^k\mathrm d_{\Omega_j}^{\varepsilon_j}=1$. Since the intervals $\Omega_j$ are nonempty and pairwise disjoint, every $\varepsilon_j$ is zero. Thus, $\Phi$ is injective. This realizes every group in $\mathscr B_n$ and proves $(3)\Rightarrow(1)$.
\end{proof}
\begin{remark}\label{rem:Bsmall}
Theorem~\ref{thm:typeB} implies that every finite subgroup of $C_{W(B_2)}$ and $C_{W(B_3)}$ is elementary abelian of rank at most two and three, respectively. Indeed, $\mathcal W_0=\{1\}$ and $\mathcal W_1\cong\mathbb{Z}/2\mathbb{Z}$, so every group in $\mathscr B_2$ is isomorphic to $(\mathbb{Z}/2\mathbb{Z})^{2}$, and every group in $\mathscr B_3$ is isomorphic to $(\mathbb{Z}/2\mathbb{Z})^{3}$.

In type $A$, every $2$-subgroup of $S_n$ embeds into $J_n$. For a general Coxeter system $(W,S)$, however, a $2$-subgroup of $W$ need not embed into $C_W$. For example, $W(B_2)$ is the dihedral group of order $8$, whereas every finite subgroup of $C_{W(B_2)}$ has order at most $4$. Therefore, $W(B_2)$ itself does not embed into $C_{W(B_2)}$.
\end{remark}

\section{Classification in type D}\label{sec:typeD}
\subsection{Cube stabilizers and sign changes}
Throughout this section, we assume $n\geq4$. We realize $W(D_n)$ as the subgroup of $W(B_n)$ consisting of those signed permutations $w$ for which the number of indices $i\in\{1,\ldots,n\}$ satisfying $w(i)<0$ is even; see \cite[Section~8.2 and Proposition~8.2.3]{BjornerBrenti}. Write $S=\{s_1,\ldots,s_n\}$ for the set of generators, numbered as follows.

\begin{figure}[htbp]
\centering
\begin{tikzpicture}[coxeter]
\node (c) {$s_3$};
\node (a) [above left=of c] {$s_1$};
\node (b) [below left=of c] {$s_2$};
\node (d) [right=of c] {$\cdots$};
\node (e) [right=of d] {$s_n$};

\draw (a.south east)--(c.north west);
\draw (b.north east)--(c.south west);
\draw (c.east)--(d.west);
\draw (d.east)--(e.west);
\end{tikzpicture}
\caption{Coxeter diagram of type $D_n$.}
\label{fig:typeD}
\end{figure}

As in the type $B$ case, for $E\subseteq\{1,\ldots,n\}$, we write $\mathrm d_E\in W(B_n)$ for the permutation interchanging $i$ and $-i$ for $i\in E$ and fixing all remaining elements. We also write
\[
 \pi:W(D_n)\longrightarrow S_n,
 \qquad
 \pi(w)(i)=|w(i)|,
\]
for the restriction of the projection defined above.

Under this realization, $s_1=\mathrm d_{\{1,2\}}(1\ 2)$, $s_2=(1\ 2)$, and $s_i=(i-1\ i)$ for $3\leq i\leq n$; see \cite[Section~8.2, p.~253]{BjornerBrenti}. The multiplication rule is
\begin{equation}\label{eq:signedmultiply}
 (\mathrm d_Ep)(\mathrm d_Fq)=\mathrm d_{E\triangle p(F)}pq.
\end{equation}
Every nonempty connected subset of $S$ has one of the following forms:
\[
 A^+_{p,q}=\{s_{p+1},\ldots,s_q\}
 \quad(1\leq p<q\leq n),
 \qquad
 A^-_q=\{s_1,s_3,\ldots,s_q\}
 \quad(2\leq q\leq n),
\]
where $A^-_2=\{s_1\}$, or
\[
 \Delta_q=\{s_1,\ldots,s_q\}
 \qquad(3\leq q\leq n).
\]
We call the subsets $A^+_{p,q}$ and $A^-_q$ \emph{$A$-paths}. We call each subset $\Delta_q$ a \emph{prefix}. This terminology includes $\Delta_3$, which has type $A_3$. A prefix $\Delta_q$ is called \emph{even} or \emph{odd} according to the parity of $q$.

For $1\leq p<q\leq n$, let $r_{p,q}\in S_n$ be the permutation sending $i$ to $p+q-i$ for $p\leq i\leq q$ and fixing every element of $\{1,\ldots,n\}$ outside $\{p,\ldots,q\}$. Then
\begin{equation}\label{eq:Dlongest}
\begin{aligned}
 \omega_{A^+_{p,q}}&=r_{p,q},\\
 \omega_{A^-_q}&=\mathrm d_{\{1\}}r_{1,q}\mathrm d_{\{1\}},\\
 \omega_{\Delta_q}&=
 \begin{cases}
  \mathrm d_{\{1,\ldots,q\}},&q\text{ even},\\
  \mathrm d_{\{2,\ldots,q\}},&q\text{ odd}.
 \end{cases}
\end{aligned}
\end{equation}
The formula for $\omega_{\Delta_q}$ follows from the length formula in \cite[Proposition~8.2.1 and equation~(8.18)]{BjornerBrenti}. Conjugation by $\mathrm d_{\{1\}}\in W(B_n)$ exchanges $s_1$ and $s_2$ and fixes all remaining generators. By \eqref{eq:Dlongest}, opposition on an even prefix is trivial; on an odd prefix, it exchanges $s_1$ and $s_2$ and fixes all remaining vertices of that prefix.

For $P\leq S_X$, let $c(P)$ denote the number of its orbits on $X$, including fixed points, and define
\[
 E_X(P)=
 \{\mathrm d_E:E\subseteq X,\ p(E)=E
 \text{ for every }p\in P,\ |E|\text{ even}\}.
\]
This is a subgroup of $W(D_n)$, and every element of $P$ commutes with every element of $E_X(P)$. For $P\leq S_n$, we write $E_D(P)=E_{\{1,\ldots,n\}}(P)$.

If $P$ is a $2$-group, then
\begin{equation}\label{eq:even-sign-rank}
 E_X(P)\cong
 \begin{cases}
  (\mathbb{Z}/2\mathbb{Z})^{c(P)},&
  \text{if $P$ has no fixed point on }X,\\
  (\mathbb{Z}/2\mathbb{Z})^{c(P)-1},&
  \text{if $P$ has a fixed point on }X.
 \end{cases}
\end{equation}
Indeed, a subset of $X$ is preserved by every element of $P$ if and only if it is a union of $P$-orbits. Each orbit has cardinality a power of two. If there are no fixed points, all orbits have even cardinality, so every union of orbits has even cardinality. If fixed points occur, such a union has even cardinality exactly when it contains an even number of fixed points. Choose a fixed point $x_0\in X$. After choosing which of the other $c(P)-1$ orbits to include, there is a unique choice of whether to include $x_0$ that makes the total cardinality even. Hence, $E_X(P)\cong(\mathbb{Z}/2\mathbb{Z})^{c(P)-1}$.

We next apply Theorem~\ref{thm:criterion} to the subgroups $g_{W(D_n)}(K_{\mathcal A})\leq W(D_n)$. The following lemma shows that, after possible conjugation in $W(B_n)$, each such subgroup is contained in a direct product of an elementary abelian $2$-group and a $2$-subgroup of $S_n$, possibly with an additional dihedral factor of order eight.

\begin{lemma}\label{lem:Dnormal}
Let $\mathcal A$ be a compatible family. At least one of the following holds.
\begin{enumerate}
\item[(i)] For some $b\in\{1,\mathrm d_{\{1\}}\}$ and $P=\pi(bH_{\mathcal{A}}b^{-1})$, we have
\[
bH_{\mathcal{A}}b^{-1}\leq E_D(P)\times P.
\]
\item[(ii)] The family contains both $\{s_1\},\{s_2\}$ and an odd prefix $\Delta_{m_*}$. Let $Y=\{3,\ldots,n\}$ and $U=\{3,\ldots,m_*\}$. There exists a $2$-subgroup $P_Y\leq S_Y$ preserving $U$ such that
\[
H_{\mathcal {A}}\leq H_*\times E_Y(P_Y)\times P_Y,
\]
where $H_*=\langle(1\ 2),\mathrm d_{\{1,2\}},\mathrm d_{\{2\}\cup U}\rangle\cong D_8\cong \mathcal W_2$.
\end{enumerate}
Alternative (i) holds unless $\{s_1\},\{s_2\}\in\mathcal A$ and $\mathcal A$ contains an odd prefix.
\end{lemma}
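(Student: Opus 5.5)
I will argue as in the proof of Theorem~\ref{thm:typeB}, with an extra case distinction caused by the fork $\{s_1,s_2\}$. Since every element of $H_{\mathcal A}$ is some $w_{\mathcal U}$, it suffices to locate the longest elements $\omega_I$, $I\in\mathcal A$, inside a suitable product subgroup. The first step is to record how compatibility constrains $\mathcal A$. The prefixes in $\mathcal A$ form a chain $\Delta_{m_1}\subsetneq\cdots\subsetneq\Delta_{m_t}$. Exactly as in type $B$, they cut $\{1,\ldots,n\}$ into consecutive blocks $\Lambda_1=\{1,\ldots,m_1\}$, $\Lambda_i=\{m_{i-1}+1,\ldots,m_i\}$, and $\Lambda_{t+1}=\{m_t+1,\ldots,n\}$, and every $A^+_{p,q}$ in $\mathcal A$ has its interval inside one block. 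The same argument applies to an $A$-path $A^-_q$, which lies in $\Delta_{m_1}$ or is orthogonal to every prefix; the latter is impossible since $s_1$ is in every prefix. So $A^-_q\in\mathcal A$ forces $q\le m_1$.

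Because $A^-_q$ contains $s_1$ and $A^+_{1,q'}$ contains $s_2$, and both contain $s_3$ when $q,q'\ge3$, the two kinds of paths through the fork are incompatible unless nested. They are never nested, so within $\Lambda_1$ at most one side of the fork is used by long paths. The only exceptions are the singletons $\{s_1\}=A^-_2$ and $\{s_2\}=A^+_{1,2}$, which are orthogonal to each other and compatible with everything containing them.

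For alternative (i), assume that $\mathcal A$ does not contain $\{s_1\}$, $\{s_2\}$, and an odd prefix simultaneously. If $\mathcal A$ contains some $A^-_q$, I conjugate by $b=\mathrm d_{\{1\}}$. This swaps $s_1$ and $s_2$, turns the $A^-$ paths into $A^+$ paths, and fixes the remaining generators and the prefixes as sets. After this, all $\omega_{A}$ for $A$-paths lie in $S_\Lambda=\prod S_{\Lambda_i}$ by \eqref{eq:Dlongest}. If both singletons occur, then $\omega_{\{s_1\}}=\mathrm d_{\{1,2\}}(1\,2)$ contributes the extra factor $\mathrm d_{\{1,2\}}$. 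By hypothesis there is then no odd prefix, so $\{1,2\}\subseteq\Lambda_1$ and $\mathrm d_{\{1,2\}}$ is a sign change on a union of $P$-invariant pieces; this needs checking.

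The even prefixes give $\omega=\mathrm d_{\{1,\ldots,m\}}$, which is a union of blocks. An odd prefix gives $\mathrm d_{\{2,\ldots,m\}}$, and this is the main obstacle: it is not a union of blocks, because $1$ is removed. If an odd prefix occurs, at most one singleton is present, and after the choice of $b$ the paths in $\Lambda_1$ avoid moving the point $1$ relative to $2$ in a way incompatible with $\mathrm d_{\{1\}}$. More precisely, I expect all relevant permutations to fix $1$, so that $\{1\}$ becomes its own orbit and $\mathrm d_{\{2,\ldots,m\}}$ is again a product of invariant sign changes with even support.

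Having placed all $\omega_I$ in a group $D\times S'$ with $D$ a group of even invariant sign changes, I set $P=\pi(bH_{\mathcal A}b^{-1})$. Then $D\le E_D(P)$ because $P\le S'$ preserves the pieces, and this gives $bH_{\mathcal A}b^{-1}\le E_D(P)\times P$ as in the type~$B$ proof.

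For alternative (ii), the presence of both singletons rules out every $A$-path through $s_3$ that meets the fork. Hence every member of $\mathcal A$ other than $\{s_1\}$, $\{s_2\}$ and the prefixes lies in $\{s_4,\ldots,s_n\}$ and acts on $Y$. Let $\Delta_{m_*}$ be the odd prefix, and let $P_Y$ be the projection of $H_{\mathcal A}$ to $S_Y$; it preserves $U$ by the block argument. Then $\omega_{\{s_2\}}=(1\,2)$, $\omega_{\{s_1\}}=\mathrm d_{\{1,2\}}(1\,2)$, and $\omega_{\Delta_{m_*}}=\mathrm d_{\{2\}\cup U}$ generate $H_*$, and a direct computation gives $H_*\cong D_8$. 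The other prefixes contribute $\mathrm d_{\{1,2\}}\mathrm d_{\{3,\ldots,m\}}$ or $\mathrm d_{\{2\}\cup U}\cdot\mathrm d_{\{m_*+1,\ldots,m\}}$, which lie in $H_*\times E_Y(P_Y)$. Taking the product of these placements over all members gives $H_{\mathcal A}\le H_*\times E_Y(P_Y)\times P_Y$.
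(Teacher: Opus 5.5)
Your plan to bound $H_{\mathcal A}$ by placing every $\omega_I$ (after conjugation by $b$) in a product subgroup fails exactly where you flagged the main obstacle, and the expectation that all relevant permutations fix $1$ is false. In alternative (i) an odd prefix can coexist with an $A$-path through the fork. For $2\le q\le m$ we have $A^+_{1,q}\subseteq\Delta_m$, so, for instance, $\mathcal A=\{\{s_2\},\Delta_3\}$ is compatible. Here $b=1$, $\omega_{\{s_2\}}=(1\ 2)$ moves $1$, and $\omega_{\Delta_3}=\mathrm d_{\{2,3\}}$. Since $(1\ 2)\mathrm d_{\{2,3\}}(1\ 2)=\mathrm d_{\{1,3\}}$, these two elements generate a dihedral group of order eight.

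That group lies in no $E_D(P)\times P$. Writing $(1\ 2)=\mathrm d_Ep$ forces $(1\ 2)\in P$, and then $\mathrm d_{\{2,3\}}\in E_D(P)$ would require $\{2,3\}$ to be $(1\ 2)$-invariant. Conjugating by $\mathrm d_{\{1\}}$ gives $\mathrm d_{\{1,2\}}(1\ 2)$ and $\mathrm d_{\{2,3\}}$, with the same obstruction. So no argument that only locates the generators $\omega_I$ can prove (i). The lemma holds because $H_{\mathcal A}$ can be a proper subgroup of $\langle\omega_I:I\in\mathcal A\rangle$. In the example, $\mathrm d_{\{2,3\}}$ conjugates $W_{\{s_2\}}$ onto $W_{\{s_1\}}\notin\mathcal P_{\mathcal A}$, and $H_{\mathcal A}=\{1,(1\ 2)\}$. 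Nor can you simply discard the odd prefixes: for $\{\{s_2\},\Delta_3,\Delta_5\}$, the element $\omega_{\Delta_3}\omega_{\Delta_5}=\mathrm d_{\{4,5\}}$ does lie in $H_{\mathcal A}$.

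The missing idea is to apply Theorem~\ref{thm:criterion} to each element separately, as the paper does. Fix $w'=\mathrm d_Ep\in bH_{\mathcal A}b^{-1}$. When at most one of $\{s_1\},\{s_2\}$ lies in $\mathcal A$, the conjugated $A$-path subgroups are of the form $S_\Omega$ for intervals $\Omega$. They are precisely the members of $b\mathcal P_{\mathcal A}b^{-1}$ meeting $\ker\pi$ trivially, since every prefix subgroup contains $s_1s_2=\mathrm d_{\{1,2\}}$. Hence $w'$ permutes them, sending $S_\Omega$ to $S_{p(\Omega)}$. The identity $\mathrm d_ES_{p(\Omega)}\mathrm d_E=S_{p(\Omega)}$ then forces every such interval to lie in $E$ or be disjoint from $E$. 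Because $P$ is generated by the reversals $r_{i,j}$ of these intervals, $E$ is $P$-invariant. Since $|E|$ is even, $\mathrm d_E\in E_D(P)$. This argument is uniform in whether odd prefixes occur.

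Your generator-based reasoning is adequate once both singletons lie in $\mathcal A$. Then every other $A$-path lies in $\{s_4,\ldots,s_n\}$, and its longest element commutes with the singleton and prefix longest elements; this matches the paper's remaining cases. For (ii) you still need to check that $H_*$, $E_Y(P_Y)$, and $P_Y$ commute pairwise and intersect trivially. Also, for odd $m<m_*$ the correction factor is $\mathrm d_{\{m+1,\ldots,m_*\}}$, so either take that case into account or choose $m_*$ minimal.
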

\begin{proof}
By Theorem~\ref{thm:criterion}, $H_{\mathcal A}$ is a finite $2$-subgroup of $W(D_n)$. Every element of $H_{\mathcal A}$ normalizes $\mathcal P_{\mathcal A}$ and can be written as $w_{\mathcal U}$ for some $\mathcal U\subseteq\mathcal A$.

We distinguish three cases according to whether $\mathcal A$ contains both $\{s_1\}$ and $\{s_2\}$ and whether it contains an odd prefix. We prove (i) in Cases~1 and~2, and (ii) in Case~3.

\emph{Case 1: at least one of $\{s_1\},\{s_2\}$ does not belong to $\mathcal A$.} Take $b=\mathrm d_{\{1\}}$ if an $A^-_q$ occurs, and $b=1$ otherwise. We claim that, for every $A$-path $I\in\mathcal A$, $bW_Ib^{-1}=S_\Omega$ for some $\Omega=\{p,\ldots,q\}$ with $1\leq p<q\leq n$. If $b=1$, all paths are already of the form $A^+_{p,q}$. If $b=\mathrm d_{\{1\}}$, conjugation exchanges $s_1,s_2$ and takes each $A^-_q$ to $A^+_{1,q}$. However, if $A^-_q\in\mathcal A$, then no $A^+_{1,r}$ can belong to $\mathcal A$. Indeed, compatibility of these two subsets would force $q=r=2$, so both $\{s_1\}$ and $\{s_2\}$ would belong to $\mathcal A$, contrary to the assumption of this case. The $A$-paths containing neither $s_1$ nor $s_2$ are fixed by conjugation by $b$. This proves the claim.

Let $\mathcal P'=b\mathcal P_{\mathcal A}b^{-1}$, $H'_{\mathcal A}=bH_{\mathcal A}b^{-1}$, and let $P=\pi(H'_{\mathcal A})$. Since $W(D_n)$ is normal in $W(B_n)$, we have $H'_{\mathcal A}\leq W(D_n)$, and $P$ is a $2$-subgroup of $S_n$. To prove that $H'_{\mathcal A}\leq E_D(P)\times P$, fix an arbitrary element $w'=\mathrm d_Ep\in H'_{\mathcal A}$, where $E\subseteq\{1,\ldots,n\}$ and $p=\pi(w')\in P$.

We first show that conjugation by $w'$ permutes the subgroups $S_\Omega$ in $\mathcal P'$ among themselves, and likewise the subgroups $W_{\Delta_q}$. To this end, we compare their intersections with $\ker\pi$. For each subgroup $S_\Omega$ obtained above, we have $S_\Omega\cap\ker\pi=\{1\}$ since $\pi(p)=p$ for every $p\in S_\Omega$. For each prefix $\Delta_q\in\mathcal A$, we have $bW_{\Delta_q}b^{-1}=W_{\Delta_q}$. This subgroup contains the non-trivial element $s_1s_2=\mathrm d_{\{1,2\}}\in\ker\pi$, and hence $W_{\Delta_q}\cap\ker\pi\neq\{1\}$.

Since $\ker\pi$ is normal in $W(D_n)$, conjugation by $w'$ preserves whether a subgroup has trivial intersection with $\ker\pi$. As $w'$ normalizes $\mathcal P'$, the subgroup $w'S_\Omega(w')^{-1}$ belongs to $\mathcal P'$. Its intersection with $\ker\pi$ is trivial, so it must be another subgroup $S_{\Omega'}$. Thus, conjugation by $w'$ permutes the subgroups $S_\Omega$ in $\mathcal P'$.

Suppose that $w'S_\Omega(w')^{-1}=S_{\Omega'}$. We next show that, for every subgroup $S_\Omega\in\mathcal P'$, either $\Omega\subseteq E$ or $\Omega\cap E=\emptyset$. Applying $\pi$ gives $S_{\Omega'}=pS_\Omega p^{-1}=S_{p(\Omega)}$. Since both $\Omega'$ and $p(\Omega)$ have at least two elements, the equality $S_{\Omega'}=S_{p(\Omega)}$ implies $\Omega'=p(\Omega)$. Using $w'=\mathrm d_Ep$, we have
\[
 \mathrm d_E S_{p(\Omega)}\mathrm d_E
 =w'S_\Omega(w')^{-1}
 =S_{\Omega'}
 =S_{p(\Omega)}.
\]

For distinct $i,j\in p(\Omega)$, this equality implies that $\mathrm d_E(i\ j)\mathrm d_E\in S_n$. On the other hand,
\[
 \mathrm d_E(i\ j)\mathrm d_E\in S_n
 \quad\Longleftrightarrow\quad
 \bigl(i\in E\Longleftrightarrow j\in E\bigr).
\]
Consequently, either $p(\Omega)\subseteq E$ or $p(\Omega)\cap E=\emptyset$. Since $w'$ permutes the subgroups $S_\Omega$ in $\mathcal P'$, the equality $\Omega'=p(\Omega)$ shows that $p$ permutes the corresponding sets $\Omega$. This proves the claim.

We now show that $E$ is preserved by every element of $P$. Let $\rho\in P$. By Theorem~\ref{thm:criterion} and the definition of $H'_{\mathcal A}$, there exists $\mathcal U\subseteq\mathcal A$ such that
\[
 \rho=\pi(bw_{\mathcal U}b^{-1})
      =\pi(w_{\mathcal U}),
\]
where the second equality holds because conjugation by $b$ does not change the image under $\pi$. By \eqref{eq:vertices} and \eqref{eq:Dlongest}, $\rho$ is a product of permutations $r_{i,j}$ with $1\leq i<j\leq n$ and $S_{\{i,\ldots,j\}}\in\mathcal P'$; the longest element of each prefix has trivial image under $\pi$. Each such $r_{i,j}$ preserves $E$, since $\{i,\ldots,j\}$ is contained in $E$ or disjoint from $E$, and $r_{i,j}$ fixes every element outside this set. Therefore, their product $\rho$ preserves $E$.

To complete the proof of (i), we verify that $\mathrm d_E\in E_D(P)$ and that $E_D(P)P$ is a direct product. Since $w'=\mathrm d_Ep\in W(D_n)$, the set $E$ has even cardinality. Together with its invariance under $P$, this gives $\mathrm d_E\in E_D(P)$. By the definition of $E_D(P)$, its elements commute with every element of $P$. Moreover, $E_D(P)\cap P=\{1\}$, since $E_D(P)\subseteq\ker\pi$ and $\pi$ is injective on $P$. Thus, multiplication identifies $E_D(P)P$ with $E_D(P)\times P$. Since $w'=\mathrm d_Ep$ was arbitrary, we obtain $ H'_{\mathcal A}\leq E_D(P)\times P$, proving (i).

For the remaining two cases, assume that $\{s_1\},\{s_2\}\in\mathcal A$. Let
\[
 Z=\{1,2\},\qquad Y=\{3,\ldots,n\},\qquad
 \tau=(1\ 2),\qquad U_m=\{3,\ldots,m\}
 \quad(3\leq m\leq n).
\]

By the compatibility with $\{s_1\}$ and $\{s_2\}$, every other $A$-path in $\mathcal A$ should be contained in $\{s_4,\ldots,s_n\}$. Thus, each such member has the form $I=A^+_{p,q}$ with $3\leq p<q\leq n$. For each prefix $\Delta_m\in\mathcal A$, compatibility implies that either $I\subseteq\Delta_m$ or $I\perp\Delta_m$. In the first case, $\{p,\ldots,q\}\subseteq U_m$; in the second, $\{p,\ldots,q\}\cap U_m=\emptyset$. Since $r_{p,q}$ reverses $\{p,\ldots,q\}$ and fixes every element outside it, we obtain
\begin{equation}\label{eq:preserveUm}
 r_{p,q}(U_m)=U_m
 \qquad(\Delta_m\in\mathcal A).
\end{equation}

By the formulas for the longest elements, we have
\[
 g_{W(D_n)}(\sigma_{\{s_1\}})=\mathrm d_Z\tau,
 \qquad
 g_{W(D_n)}(\sigma_{\{s_2\}})=\tau,
\]
and
\[
 g_{W(D_n)}(\sigma_{\Delta_m})=\omega_{\Delta_m}
 =
 \begin{cases}
  \mathrm d_{Z\cup U_m},&m\text{ even},\\
  \mathrm d_{\{2\}\cup U_m},&m\text{ odd}.
 \end{cases}
\]

\emph{Case 2: $\{s_1\},\{s_2\}\in\mathcal A$, and $\mathcal A$ contains no odd prefix.} Let $P=\pi(H_{\mathcal A})$. For every $J\in\mathcal A$, the permutation $\pi(\omega_J)$ is either $1$, $\tau$, or $\omega_I$ for an $A$-path $I\in\mathcal A$ contained in $\{s_4,\ldots,s_n\}$. The permutation $\tau$ preserves $Z$ and fixes $Y$ pointwise. Each such $\omega_I$ fixes $Z$ pointwise and preserves every $U_m$ with $\Delta_m\in\mathcal A$ by \eqref{eq:preserveUm}. Thus, every $\pi(\omega_J)$ preserves $Z$ and each $Z\cup U_m$. Since every element of $P$ is a product of these permutations, these sets are also preserved by every element of $P$.

Let $w=\mathrm d_Ep\in H_{\mathcal A}$, where $E\subseteq\{1,\ldots,n\}$ and $p=\pi(w)\in P$. By Theorem~\ref{thm:criterion}, one can write $w=\omega_{I_1}\cdots\omega_{I_k}$ for some $I_1,\ldots,I_k\in\mathcal A$. For each $j$, write $\omega_{I_j}=\mathrm d_{E_j}p_j$, where $p_j=\pi(\omega_{I_j})$. The formulas for the longest elements given above yield $E_j\in \{\emptyset,Z\} \cup \{Z\cup U_m:\Delta_m\in\mathcal A\}$. Since each $p_j$ preserves every $E_\ell$, the multiplication rule \eqref{eq:signedmultiply} gives
\[
 w=\mathrm d_Ep,\qquad
 E=E_1\triangle\cdots\triangle E_k,\qquad
 p=p_1\cdots p_k=\pi(w)\in P.
\]
Since every $E_j$ is preserved by every element of $P$, the same holds for $E=E_1\triangle\cdots\triangle E_k$. Furthermore, $|E|$ is even because $w=\mathrm d_Ep\in H_{\mathcal A}\leq W(D_n)$. Thus, $\mathrm d_E\in E_D(P)$. Every element of $P$ commutes with every element of $E_D(P)$ by the definition of $E_D(P)$. Moreover, $E_D(P)\cap P=\{1\}$, since $E_D(P)\subseteq\ker\pi$ and $\pi(p)=p$ for every $p\in P$. Therefore, we have $E_D(P)P\cong E_D(P)\times P$. Since $w=\mathrm d_Ep$ was arbitrary, with $\mathrm d_E\in E_D(P)$ and $p\in P$, we obtain $H_{\mathcal A}\leq E_D(P)\times P$. This proves (i) with $b=1$.

\emph{Case 3: $\{s_1\},\{s_2\}\in\mathcal A$, and $\mathcal A$ contains an odd prefix.} We first construct the dihedral subgroup $H_*$ appearing in (ii). Choose an odd prefix $\Delta_{m_*}\in\mathcal A$ and let $U=U_{m_*}$ and $T=\{2\}\cup U$. Since $|T|=m_*-1$ is even, we have $\mathrm d_T\in W(D_n)$. Let $H_*=\langle\tau,\mathrm d_Z,\mathrm d_T\rangle$. The relations
\[
 \tau\mathrm d_Z\tau=\mathrm d_Z,\qquad
 \tau\mathrm d_T\tau=\mathrm d_Z\mathrm d_T,\qquad
 \langle\mathrm d_Z,\mathrm d_T\rangle\cong(\mathbb{Z}/2\mathbb{Z})^{2}
\]
show that $H_*$ is dihedral of order eight.

We next define a $2$-subgroup $P_Y\leq S_Y$ and show that it preserves every $U_m$ with $\Delta_m\in\mathcal A$. The formulas for the longest elements show that $\pi(H_{\mathcal A})$ preserves $Y$, so we can define
\[
 P_Y=\{p|_Y:p\in\pi(H_{\mathcal A})\}\leq S_Y.
\]
As a homomorphic image of $H_{\mathcal A}$, this is a $2$-group. By Theorem~\ref{thm:criterion} and \eqref{eq:Dlongest}, every element of $P_Y$ is a product of permutations $\omega_I\in S_Y$, where $I\in\mathcal A$ is an $A$-path contained in $\{s_4,\ldots,s_n\}$. Each such $\omega_I$ preserves every $U_m$ with $\Delta_m\in\mathcal A$ by \eqref{eq:preserveUm}. Hence, every element of $P_Y$ preserves these sets, including $U$.

We now show that every longest element corresponding to a prefix in $\mathcal A$ belongs to $\langle H_*,E_Y(P_Y)\rangle$. By \eqref{eq:Dlongest},
\[
 \omega_{\Delta_m}=
 \begin{cases}
  \mathrm d_Z\mathrm d_{U_m},&m\text{ even},\\
  \mathrm d_T\mathrm d_{U\triangle U_m},&m\text{ odd}.
 \end{cases}
\]
The first factor in each case belongs to $H_*$. The sets $U_m$ for even $m$ and $U\triangle U_m$ for odd $m$ have even cardinality and are subsets of $Y$ preserved by $P_Y$. Thus, the second factor belongs to $E_Y(P_Y)$. We also have $\omega_{\{s_1\}}=\mathrm d_Z\tau\in H_*$ and $\omega_{\{s_2\}}=\tau\in H_*$.

We claim that $H_{\mathcal A}\leq\langle H_*,E_Y(P_Y),P_Y\rangle$. Let $w\in H_{\mathcal A}$. By Theorem~\ref{thm:criterion}, write $w$ as a product of longest elements indexed by members of $\mathcal A$. For each $A$-path $I\in\mathcal A$ contained in $\{s_4,\ldots,s_n\}$, the permutation $\omega_I$ fixes $Z$ pointwise and preserves every $U_m$ with $\Delta_m\in\mathcal A$ by \eqref{eq:preserveUm}. Therefore, the formulas for the longest elements show that $\omega_I$ commutes with $\omega_{\{s_1\}}$, $\omega_{\{s_2\}}$, and each $\omega_{\Delta_m}$ with $\Delta_m\in\mathcal A$. Moving these factors $\omega_I$ to the right, while preserving their relative order, gives
\[
 w=av,\qquad
 a\in\langle H_*,E_Y(P_Y)\rangle,\qquad v\in S_Y.
\]
Since $\pi(a)\in\langle\tau\rangle$ fixes $Y$ pointwise, we have $v|_Y=\pi(w)|_Y$. Thus $v\in P_Y$, proving the claim.

It remains to show that the three subgroups $H_*$, $E_Y(P_Y)$, and $P_Y$ form a direct product. The groups $E_Y(P_Y)$ and $P_Y$ commute by the definition of $E_Y(P_Y)$, and their intersection is trivial because $\pi$ is trivial on the former and injective on the latter. Both also commute with $H_*$: elements of $E_Y(P_Y)$ change signs only in $Y$, while elements of $P_Y$ fix $Z$ pointwise and preserve $U$. Moreover, we have $H_*\cap\bigl(E_Y(P_Y)P_Y\bigr)=\{1\}$. Indeed, every element $w\in E_Y(P_Y)P_Y$ satisfies $w(1)=1$ and $w(2)=2$. Suppose that $h\in H_*$ also satisfies these equalities. Since $\pi(H_*)=\langle\tau\rangle$ and $\tau=(1\ 2)$, we must have $\pi(h)=1$. Hence,
\[
 h\in\langle\mathrm d_Z,\mathrm d_T\rangle
 =\{1,\mathrm d_Z,\mathrm d_T,\mathrm d_{Z\triangle T}\}.
\]
Each non-trivial element in this list sends at least one of $1$ and $2$ to its negative. Thus $h=1$, proving $H_*\cap\bigl(E_Y(P_Y)P_Y\bigr)=\{1\}$. Consequently, $H_{\mathcal A}\leq H_*\times E_Y(P_Y)\times P_Y$, proving (ii).
\end{proof}

A finite group may embed into groups of both forms appearing in (i) and (ii). In (ii), the group $P_Y$ has a fixed point in $U$. Indeed, $P_Y$ preserves $U$, and each of its orbits in $U$ has cardinality a power of two. Since $|U|$ is odd, at least one of these orbits must consist of a single element.

\subsection{Classification of finite subgroups}
For $a\geq0$, let $B_a=\mathbb{Z}/2\mathbb{Z}\times\mathcal W_a$. Note that $|B_a|=2^{2^a}$.

\begin{theorem}\label{thm:typeD}
For a finite group $F$, the following are equivalent.
\begin{enumerate}
\item $F\hookrightarrow C_{W(D_n)}$.
\item At least one of the following holds:
\begin{enumerate}
\item[(D0)] $F\hookrightarrow E_D(P)\times P$ for some $2$-subgroup $P\leq S_n$;
\item[(D1)] $F\hookrightarrow\mathcal W_2\times(\mathbb{Z}/2\mathbb{Z})^{c(P)}\times P$ for some $2$-subgroup $P\leq S_m$, where $0\leq m\leq n-3$.
\end{enumerate}
For $m=0$, take $P=1$ and $c(P)=0$.
\item At least one of the following holds, with $a_j\geq0$ and empty products allowed:
\begin{align*}
 &F\hookrightarrow\prod_j B_{a_j},&&\sum_j2^{a_j}\leq2\lfloor n/2\rfloor;\tag{B0}\\
 &F\hookrightarrow\mathcal W_2\times\prod_jB_{a_j},&&3+\sum_j2^{a_j}\leq n.\tag{B1}
\end{align*}
\end{enumerate}
\end{theorem}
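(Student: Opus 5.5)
We prove $(1)\Rightarrow(2)\Rightarrow(3)\Rightarrow(1)$, modelled on the proof of Theorem~\ref{thm:typeB}.

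\emph{$(1)\Rightarrow(2)$.} Let $F\leq C_{W(D_n)}$ be finite. By the proof of Lemma~\ref{lem:pure} and Theorem~\ref{thm:criterion}, $F$ embeds into $H_{\mathcal A}$ for some compatible family $\mathcal A$. Apply Lemma~\ref{lem:Dnormal}. In alternative (i), $H_{\mathcal A}$ is conjugate in $W(B_n)$ to a subgroup of $E_D(P)\times P$, which gives (D0). In alternative (ii), $H_{\mathcal A}\leq H_*\times E_Y(P_Y)\times P_Y$ with $H_*\cong\mathcal W_2$. Identify $Y$ with $\{1,\ldots,n-2\}$. By \eqref{eq:even-sign-rank}, $E_Y(P_Y)\cong(\mathbb Z/2\mathbb Z)^{c(P_Y)-1}$, since $P_Y$ has a fixed point in $U$ (remark after the lemma). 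Remove one fixed point $y_0\in U$ from $Y$. This gives a $2$-subgroup $P\leq S_m$ with $m=n-3$, obtained by restricting $P_Y$ to $Y\setminus\{y_0\}$. The restriction is injective because $y_0$ is fixed, and $c(P)=c(P_Y)-1$. This is (D1) with $m=n-3$.

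\emph{$(2)\Rightarrow(3)$.} For (D1), argue exactly as in $(2)\Rightarrow(3)$ of Theorem~\ref{thm:typeB}. Decompose $P$ by its orbits, of sizes $2^{a_j}$ with $\sum 2^{a_j}=m\leq n-3$. Embed each orbit restriction into $\mathcal W_{a_j}$ and pair it with one sign factor; this yields (B1). For (D0), let the orbits of $P$ have sizes $2^{a_j}$. If $P$ has no fixed point, every orbit is even, so $\sum 2^{a_j}=n$ is even and (B0) follows as in type $B$. If $P$ has a fixed point $x_0$, we have $E_D(P)\cong(\mathbb Z/2\mathbb Z)^{c(P)-1}$. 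Drop the orbit $\{x_0\}$: its factor $\mathcal W_0$ is trivial and its sign is absorbed by the other orbits. This embeds $E_D(P)\times P$ into $\prod_{j\neq j_0}B_{a_j}$ with $\sum_{j\neq j_0}2^{a_j}=n-1$. Since $n-1\leq 2\lfloor n/2\rfloor$ in all cases, this gives (B0).

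\emph{$(3)\Rightarrow(1)$.} We realize the groups explicitly. Subgroups only shrink if the total size decreases, since for $\sum 2^{a_j}<N$ we may append $a=0$ blocks and $B_0=\mathbb Z/2\mathbb Z$ only enlarges the group. So it suffices to realize $\prod B_{a_j}$ with $\sum 2^{a_j}=2\lfloor n/2\rfloor$, and $\mathcal W_2\times\prod B_{a_j}$ with $\sum 2^{a_j}=n-3$.

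For (B0), partition an even-size range into consecutive intervals $\Omega_j$ placed inside the type-$A$ subdiagram $\{s_2,\ldots,s_n\}$, or $\{s_1,s_3,\ldots\}$, and build $L_j\cong\mathcal W_{a_j}$ via Lemma~\ref{lem:dyadic}. As sign involutions, take products of even-prefix generators $\sigma_{\Delta_q}\sigma_{\Delta_{q'}}$. By \eqref{eq:Dlongest} these map to $\mathrm d_{\Omega_j}$ when $\Omega_j$ has endpoints at even positions. One must also handle $\{1,2\}$, using $\sigma_{\{s_1\}}\sigma_{\{s_2\}}\mapsto\mathrm d_{\{1,2\}}$ and $\sigma_{\{s_1\}},\sigma_{\{s_2\}}$ for a block of size $2$, and use the central behaviour of even prefixes to check commutation via the nesting relations. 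A subtlety arises for blocks of size $1$ (the $a=0$ factors): their sign changes are odd and hence unavailable. I would pair them up, which is why $2\lfloor n/2\rfloor$ appears, or else treat them via $\mathrm d_{\{i,j\}}$ combinations.

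For (B1), use $\sigma_{\{s_1\}},\sigma_{\{s_2\}},\sigma_{\Delta_3}$. Their images are $\mathrm d_Z\tau$, $\tau$ and $\mathrm d_{\{2,3\}}$. By the nesting relations these generate a copy of $\mathcal W_2\cong D_8$, mapped isomorphically by the canonical map, which is injective on finite subgroups by Lemma~\ref{lem:pure}. Then place the $B_{a_j}$ on $\{4,\ldots,n\}$ using $A$-paths in $\{s_5,\ldots,s_n\}$ and differences of prefixes $\Delta_q$. Here parity matters: differences of two prefixes of the same parity give sign changes on even-size sets, while mixed parities contribute factors $\mathrm d_{\{1\}}$-type corrections which are absorbed by $D_8$.

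\textbf{Main obstacle.} Injectivity is checked by applying $g_{W(D_n)}$, as in type $B$. The main difficulty is the bookkeeping in this step: producing commuting sign involutions $z_j$ with images exactly $\mathrm d_{\Omega_j}$ in $W(D_n)$ (or twisted by $H_*$), and verifying that they commute with the $L_j$ and with $H_*$.
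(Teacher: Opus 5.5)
Your arguments for $(1)\Rightarrow(2)$ and $(2)\Rightarrow(3)$ are the paper's. So is your realization of (B0): reduce to an even total, pair the $B_0$ factors into $B_1$'s, build $\mathcal W_{a_j}$ on $A$-paths, and use even-prefix differences as sign involutions. The gap is in the realization of (B1). You defer it as bookkeeping, but it contains the one non-routine choice in the proof.

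\emph{Why the literal construction fails.} A block of odd length, i.e.\ a singleton $\{p\}\subseteq\{4,\ldots,n\}$, forces a mixed-parity difference $z=\sigma_{\Delta_p}\sigma_{\Delta_{p-1}}$. Its image is $\mathrm d_{\{1,p\}}$. Exactly one of the two prefixes is odd, so the nesting relations give $z\sigma_{\{s_1\}}z^{-1}=\sigma_{\{s_2\}}$. Thus $z$ does not commute with $H^*=\langle\sigma_{\{s_1\}},\sigma_{\{s_2\}},\sigma_{\Delta_3}\rangle$, and multiplication from $\mathcal W_2\times\prod_jB_{a_j}$ is not a homomorphism. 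Singletons are unavoidable in (B1) when $n-3$ is odd and no other arrangement is made. The target you state at the end, an image $\mathrm d_{\Omega_j}$ ``twisted by $H_*$'', also cannot exist for a singleton: $H_*\leq W(D_n)$ and $\mathrm d_{\{p\}}\notin W(D_n)$, so $\mathrm d_{\{p\}}h\notin W(D_n)$ for every $h\in H_*$. The twist must be by an odd sign change outside $W(D_n)$ that centralizes $H_*$, namely $\mathrm d_{\{1,2,3\}}$.

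\emph{The paper's fix.} Set $\gamma_3=1$, $\gamma_r=\sigma_{\Delta_r}$ for even $r\geq4$, and $\gamma_r=\sigma_{\Delta_3}\sigma_{\Delta_r}$ for odd $r\geq5$. Both factors of an odd $\gamma_r$ swap $\sigma_{\{s_1\}}$ and $\sigma_{\{s_2\}}$, so every $\gamma_r$ centralizes $H^*$. Put $z_j=\gamma_{q_j}\gamma_{p_j-1}$. Then:
\begin{itemize}
\item $g_{W(D_n)}(z_j)=\mathrm d_{\Omega_j}$ when $|\Omega_j|\geq2$, and $g_{W(D_n)}(z_j)=\mathrm d_{\{1,2,3,p_j\}}$ when $\Omega_j=\{p_j\}$.
\item The $z_j$ commute with the $L_i$, because opposition on any prefix fixes $s_3,\ldots,s_r$ pointwise, and each $A$-path used lies in or is orthogonal to each endpoint prefix.
\item Injectivity holds because only $z_j$ negates $p_j$; the blocks of length at least two are handled as in (B0).
\end{itemize}
Your phrase ``absorbed by $D_8$'' presumably means multiplying by $\sigma_{\Delta_3}\in H^*$, but this must be made explicit.

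\emph{An alternative.} You can avoid singletons altogether. For even $n$, first enlarge to $\sum_j2^{a_j}=n-3$. This sum is odd, so some $B_0$ occurs, and $\mathcal W_2\times B_0\cong B_2$ turns the group into a (B0) group of total $n$. For odd $n$, the $B_0$ factors with $\sum_j2^{a_j}=n-3$ are even in number and can be paired into $B_1$'s. Then every block has even length, every endpoint is odd, and the same-parity differences $\sigma_{\Delta_{q_j}}\sigma_{\Delta_{p_j-1}}$ already commute with $H^*$.
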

\begin{proof}
\emph{$(1)\Rightarrow(2)$.} By Theorem~\ref{thm:criterion}, $F$ embeds into $H_{\mathcal A}$ for some compatible family $\mathcal A$. We apply Lemma~\ref{lem:Dnormal} to this group. Alternative (i) immediately gives (D0).

We show that alternative (ii) gives (D1) with $m=n-3$. Since $P_Y$ is a $2$-group preserving the odd set $U$, it fixes some $r\in U$. Thus, restriction defines a homomorphism $P_Y\to S_{Y\setminus\{r\}}$ given by $\rho\mapsto\rho|_{Y\setminus\{r\}}$. This homomorphism is injective. After identifying $Y\setminus\{r\}$ with $\{1,\ldots,n-3\}$, denote the resulting subgroup of $S_{n-3}$ by $P_0$. Then $P_Y\cong P_0$ and $c(P_0)=c(P_Y)-1$. Since $P_Y$ has a fixed point, \eqref{eq:even-sign-rank} gives $E_Y(P_Y)\cong(\mathbb{Z}/2\mathbb{Z})^{c(P_Y)-1}=(\mathbb{Z}/2\mathbb{Z})^{c(P_0)}$. Therefore, alternative (ii), together with $H_*\cong\mathcal W_2$, gives (D1).

\emph{$(2)\Rightarrow(3)$.} We first derive (B0) from (D0). Let $\Omega_1,\ldots,\Omega_k$ be the orbits of $P$ on $\{1,\ldots,n\}$, with $|\Omega_j|=2^{a_j}$. The image of $P$ under restriction to each $\Omega_j$ is a $2$-subgroup of $S_{\Omega_j}$, and hence embeds into $\mathcal W_{a_j}$ by Sylow's theorem and Lemma~\ref{lem:sylow}. The homomorphism $P\to\prod_{j=1}^k S_{\Omega_j}$ defined by these restrictions is injective, since an element in its kernel fixes every element of $\{1,\ldots,n\}$. Consequently, $P\hookrightarrow\prod_{j=1}^k\mathcal W_{a_j}$.

If $P$ has no fixed point, then every $a_j\geq1$, so $n$ is even. By \eqref{eq:even-sign-rank},
\[
 E_D(P)\times P
 \hookrightarrow\prod_{j=1}^k B_{a_j},
 \qquad
 \sum_{j=1}^k2^{a_j}=n=2\lfloor n/2\rfloor.
\]
If $P$ has a fixed point, arrange that $\Omega_k$ is a singleton. As in the proof of $(1)\Rightarrow(2)$, the restriction homomorphism $P\to\prod_{j=1}^{k-1}S_{\Omega_j}$ is injective, since $\Omega_k$ is a singleton orbit. Moreover, \eqref{eq:even-sign-rank} gives $E_D(P)\cong(\mathbb{Z}/2\mathbb{Z})^{k-1}$. Consequently,
\[
 E_D(P)\times P
 \hookrightarrow\prod_{j=1}^{k-1}B_{a_j},
 \qquad
 \sum_{j=1}^{k-1}2^{a_j}
 =n-1\leq2\lfloor n/2\rfloor.
\]
This proves (B0).

We next derive (B1) from (D1). For $P\leq S_m$, let its orbits have sizes $2^{a_1},\ldots,2^{a_k}$, where $k=c(P)$. The same restriction argument gives
\[
 \mathcal W_2\times(\mathbb{Z}/2\mathbb{Z})^{c(P)}\times P
 \hookrightarrow\mathcal W_2\times\prod_{j=1}^kB_{a_j}.
\]
Since $\sum_j2^{a_j}=m\leq n-3$, we deduce that (B1) holds. When $m=0$, the product on the right is empty.

\emph{$(3)\Rightarrow(1)$.} We realize every group appearing in (B0) or (B1). For both constructions, we use the elements
\[
 \delta_0=\delta_1=1,\qquad
 \delta_2=\sigma_{\{s_1\}}\sigma_{\{s_2\}},\qquad
 \delta_r=\sigma_{\Delta_r}\quad(3\leq r\leq n).
\]
These elements commute and satisfy $\delta_r^2=1$. Indeed, opposition on each prefix preserves every smaller prefix. It also either fixes or exchanges $\{s_1\}$ and $\{s_2\}$, whose corresponding generators commute. By \eqref{eq:Dlongest},
\begin{equation}\label{eq:deltar}
 g_{W(D_n)}(\delta_r)=
 \begin{cases}
  1,&r=0,1,\\
  \mathrm d_{\{1,\ldots,r\}},&r\geq2\text{ even},\\
  \mathrm d_{\{2,\ldots,r\}},&r\geq3\text{ odd}.
 \end{cases}
\end{equation}

\emph{Realization of (B0).} We first reduce to nonempty products with $a_j\geq1$. Since $B_0\times B_0\cong B_1$, we may replace each pair of factors $B_0$ by one factor $B_1$. If one factor $B_0$ remains, embed the product into its product with one additional $B_0$, and replace the resulting pair by $B_1$. This increases $\sum_j2^{a_j}$ by one. The sum before this increase is odd, whereas $2\lfloor n/2\rfloor$ is even, so the required inequality is preserved. Therefore, it suffices to realize the resulting products with all $a_j\geq1$.

We now construct the factors $B_{a_j}=\mathbb{Z}/2\mathbb{Z}\times\mathcal W_{a_j}$. Let $M=\sum_j2^{a_j}$ and partition $\{1,\ldots,M\}$ into consecutive intervals $\Omega_j=\{p_j,\ldots,q_j\}$ of sizes $2^{a_j}$. Let $T=\{s_2,\ldots,s_n\}$, which is a subdiagram of type $A_{n-1}$. Using the identification $J_n\cong C_{W_T}$, Proposition~\ref{prop:parabolic} gives an embedding
\[
 \iota:J_n\hookrightarrow C_{W(D_n)},
 \qquad
 s_{r,s}\longmapsto\sigma_{A^+_{r,s}}.
\]
Apply Lemma~\ref{lem:dyadic} to each interval $\Omega_j=\{p_j,\ldots,q_j\}$ and let $L_j$ be the image under $\iota$ of the resulting subgroup. Then
\[
 L_j\leq
 \left\langle
  \sigma_{A^+_{r,s}}:p_j\leq r<s\leq q_j
 \right\rangle,
 \qquad
 L_j\cong\mathcal W_{a_j}.
\]
Regarding $S_n$ as a subgroup of $W(D_n)$, we have $g_{W(D_n)}\circ\iota=\Sigma_n$. Thus, $g_{W(D_n)}$ is injective on $L_j$ and $g_{W(D_n)}(L_j)\leq S_{\Omega_j}$. For the factor $\mathbb{Z}/2\mathbb{Z}$, let $z_j=\delta_{q_j}\delta_{p_j-1}$.  Both indices are even, so $z_j^2=1$ and $g_{W(D_n)}(z_j)=\mathrm d_{\Omega_j}$.

We verify that these choices define a homomorphism from $\prod_jB_{a_j}$. The groups $L_j$ commute with one another because their defining subdiagrams are pairwise orthogonal, and the elements $z_j$ commute because the $\delta_r$ commute. To check that $z_i$ commutes with $L_j$, consider a prefix $\Delta_b$ occurring in $z_i$. The integer $b$ is an endpoint of one of the intervals in the partition. Thus, each $A^+_{r,s}$ used to generate $L_j$ is contained in $\Delta_b$ or orthogonal to it. Since $b$ is even, opposition on $\Delta_b$ is trivial, so the defining relations give the required commutation. If $\delta_2$ occurs, the first interval is $\{1,2\}$. In that case, $\delta_2$ commutes with $\sigma_{\{s_2\}}$, and both $\{s_1\}$ and $\{s_2\}$ are orthogonal to every subdiagram used for the later intervals. Hence, $[z_i,L_j]=1$ for all $i,j$. Therefore, multiplication defines a homomorphism $\prod_jB_{a_j}\rightarrow C_{W(D_n)}$.

We show that this homomorphism is injective. Suppose that $\prod_j z_j^{\varepsilon_j}\ell_j=1$ for some $\varepsilon_j\in\{0,1\}$ and $\ell_j\in L_j$. For $i\in\Omega_j$, applying $g_{W(D_n)}$ gives $(-1)^{\varepsilon_j}g_{W(D_n)}(\ell_j)(i)=i$, because all factors belonging to other intervals fix every integer in $\Omega_j$. Since $g_{W(D_n)}(\ell_j)\in S_{\Omega_j}$, this forces $\varepsilon_j=0$ and $g_{W(D_n)}(\ell_j)=1$. Injectivity on $L_j$ then implies $\ell_j=1$. Thus, every product obtained after the initial reduction embeds into $C_{W(D_n)}$. Each original product is isomorphic to a subgroup of such a product, proving the realization of (B0).

\emph{Realization of (B1).} We first realize the factor $\mathcal W_2$ as $H^*= \left\langle \sigma_{\{s_1\}},\sigma_{\{s_2\}},\delta_3 \right\rangle$. The first two generators commute, and conjugation by $\delta_3$ exchanges them. These relations give a surjection $\mathcal W_2\to H^*$. Their images under $g_{W(D_n)}$ generate a dihedral group of order eight, so this surjection is an isomorphism and $g_{W(D_n)}$ is injective on $H^*$. If the product of the $B_{a_j}$ is empty, $H^*$ alone gives the required realization. Therefore, we may assume that the product is nonempty.

To construct the remaining factors commuting with $H^*$, define
\[
 \gamma_3=1,\qquad
 \gamma_r=
 \begin{cases}
  \delta_r,&r\geq4\text{ even},\\
  \delta_3\delta_r,&r\geq5\text{ odd}.
 \end{cases}
\]
These elements commute, satisfy $\gamma_r^2=1$, and commute with every element of $H^*$. For even $r$, opposition on $\Delta_r$ is trivial. For odd $r$, conjugation by each of $\delta_3$ and $\delta_r$ exchanges $\sigma_{\{s_1\}}$ and $\sigma_{\{s_2\}}$, so their product commutes with both generators. All the $\gamma_r$ also commute with $\delta_3$. By \eqref{eq:deltar},
\begin{equation}\label{eq:gammar}
 g_{W(D_n)}(\gamma_r)=
 \begin{cases}
  1,&r=3,\\
  \mathrm d_{\{1,\ldots,r\}},&r\geq4\text{ even},\\
  \mathrm d_{\{4,\ldots,r\}},&r\geq5\text{ odd}.
 \end{cases}
\end{equation}

We now construct each remaining factor $B_{a_j}$. Let $M=3+\sum_j2^{a_j}$ and partition $\{4,\ldots,M\}$ into consecutive intervals $\Omega_j=\{p_j,\ldots,q_j\}$ of sizes $2^{a_j}$. Choose $L_j\cong\mathcal W_{a_j}$ using the generators $\sigma_{A^+_{r,s}}$ with $p_j\leq r<s\leq q_j$, as in the realization of (B0). Let $z_j=\gamma_{q_j}\gamma_{p_j-1}$. Then $z_j^2=1$, and \eqref{eq:gammar} gives
\[
 g_{W(D_n)}(z_j)=
 \begin{cases}
  \mathrm d_{\Omega_j},&|\Omega_j|>1,\\
  \mathrm d_{\{1,2,3,p_j\}},&\Omega_j=\{p_j\}.
 \end{cases}
\]

We check that multiplication defines a homomorphism
\[
 H^*\times\prod_j(\mathbb{Z}/2\mathbb{Z}\times L_j)
 \longrightarrow C_{W(D_n)}.
\]
The groups $L_j$ commute with one another and with $H^*$: their defining subdiagrams are pairwise orthogonal and are also orthogonal to $\Delta_3$. The elements $z_j$ commute with one another and with $H^*$ since every $\gamma_r$ commutes with every element of $H^*$. Finally, each subdiagram $A^+_{r,s}$ used for an $L_j$ is contained in or orthogonal to every prefix $\Delta_b$ arising from an endpoint in the partition. Opposition on such a prefix fixes every contained $A^+_{r,s}$ pointwise. Furthermore, $\Delta_3$ is orthogonal to all these $A^+_{r,s}$, whose vertices belong to $\{s_5,\ldots,s_n\}$. Therefore, it follows that $[z_i,L_j]=1$ for all $i,j$.

Finally, we prove that this homomorphism is injective. Suppose that
\[
 h\prod_j z_j^{\varepsilon_j}\ell_j=1,
 \qquad
 h\in H^*,\quad
 \varepsilon_j\in\{0,1\},\quad
 \ell_j\in L_j.
\]
For a non-singleton interval $\Omega_j$, the image of $h$ and the images of all factors belonging to other intervals fix every integer in $\Omega_j$. The argument used for (B0) therefore gives $\varepsilon_j=0$ and $\ell_j=1$. For a singleton interval $\Omega_j=\{p_j\}$, we have $\ell_j=1$ since $L_j$ is trivial, and among the remaining factors only $g_{W(D_n)}(z_j)$ sends $p_j$ to $-p_j$. Thus, $\varepsilon_j=0$ in this case as well. The original relation now reduces to $h=1$. Consequently, we have $H^*\times\prod_j(\mathbb{Z}/2\mathbb{Z}\times L_j)\cong\mathcal W_2\times\prod_jB_{a_j}$. This realizes (B1) and completes the proof.
\end{proof}

In condition (3), the inequalities in (B0) and (B1) may be replaced by
\[
 \sum_j2^{a_j}=2\lfloor n/2\rfloor
 \quad\text{and}\quad
 \sum_j2^{a_j}=n-3,
\]
respectively. Indeed, adjoining a factor $B_0$ increases $\sum_j2^{a_j}$ by one, and the original group embeds into the enlarged group. Thus, every product satisfying either inequality embeds into a product satisfying the corresponding equality. Conversely, the products satisfying these equalities already satisfy the original inequalities. This gives the $D_n$ row of Table~\ref{tab:classification}.

\begin{remark}
For every $n\geq4$, no Sylow $2$-subgroup of $W(D_n)$ embeds into $C_{W(D_n)}$. Indeed, since $|B_a|=2^{2^a}$ and $|\mathcal W_2|=8$, Theorem~\ref{thm:typeD} implies that every finite subgroup of $C_{W(D_n)}$ has order at most $2^n$. The group $W(D_n)$ has order $2^{n-1}n!$, and $8$ divides $n!$ for $n\geq4$. Thus, a Sylow $2$-subgroup of $W(D_n)$ has order at least $2^{n+2}$ and cannot embed into $C_{W(D_n)}$.
\end{remark}

\section{\texorpdfstring{Classification in types $E_6$, $E_7$, and $E_8$}{Classification in types E\unichar{"2086}, E\unichar{"2087}, and E\unichar{"2088}}}\label{sec:typeE}
We number the $E_r$ diagram, $r\in\{6,7,8\}$, by taking the induced subdiagram on $\{s_1,\ldots,s_r\}$. Write $S_r=\{s_1,\ldots,s_r\}$. For a string of distinct indices, let $I_{i_1\cdots i_k}=\{s_{i_1},\ldots,s_{i_k}\}$. In type $E_6$, conjugation by $\omega_{S_6}$ exchanges $s_2$ with $s_6$ and $s_3$ with $s_5$, and fixes $s_1$ and $s_4$. In types $E_7$ and $E_8$, the longest elements $\omega_{S_7}$ and $\omega_{S_8}$ are central in their respective Coxeter groups; see \cite[Remark~3.1.2]{DJS03}.

\begin{figure}[htbp]
\centering
\begin{tikzpicture}[coxeter]

\begin{scope}
\node (b6) {$s_2$};
\node (c6) [right=of b6] {$s_3$};
\node (d6) [right=of c6] {$s_4$};
\node (e6) [right=of d6] {$s_5$};
\node (f6) [right=of e6] {$s_6$};
\node (a6) [above=of d6] {$s_1$};
\node [left=10mm of b6] {$E_6$};

\foreach \u/\v in {b6/c6,c6/d6,d6/e6,e6/f6}
  \draw (\u.east)--(\v.west);
\draw (a6.south)--(d6.north);
\end{scope}

\begin{scope}[yshift=-2cm]
\node (b7) {$s_2$};
\node (c7) [right=of b7] {$s_3$};
\node (d7) [right=of c7] {$s_4$};
\node (e7) [right=of d7] {$s_5$};
\node (f7) [right=of e7] {$s_6$};
\node (g7) [right=of f7] {$s_7$};
\node (a7) [above=of d7] {$s_1$};
\node [left=10mm of b7] {$E_7$};

\foreach \u/\v in {b7/c7,c7/d7,d7/e7,e7/f7,f7/g7}
  \draw (\u.east)--(\v.west);
\draw (a7.south)--(d7.north);
\end{scope}

\begin{scope}[yshift=-4cm]
\node (b8) {$s_2$};
\node (c8) [right=of b8] {$s_3$};
\node (d8) [right=of c8] {$s_4$};
\node (e8) [right=of d8] {$s_5$};
\node (f8) [right=of e8] {$s_6$};
\node (g8) [right=of f8] {$s_7$};
\node (h8) [right=of g8] {$s_8$};
\node (a8) [above=of d8] {$s_1$};
\node [left=10mm of b8] {$E_8$};

\foreach \u/\v in {b8/c8,c8/d8,d8/e8,e8/f8,f8/g8,g8/h8}
  \draw (\u.east)--(\v.west);
\draw (a8.south)--(d8.north);
\end{scope}

\end{tikzpicture}
\caption{Coxeter diagrams of types $E_6$, $E_7$, and $E_8$.}
\label{fig:typeE}
\end{figure}

\subsection{Embeddings of cube stabilizers}

We first collect the consequences of the results for types $A$ and $D$ that will be used for proper connected subdiagrams of $E_6$, $E_7$, and $E_8$.

\begin{lemma}\label{lem:lowrank}
If a finite group $F$ embeds into $C_{W(X)}$ for a type $X$ in the left column, it embeds into at least one group in the right column.
\[
\begin{array}{c|l}
 X&\text{groups containing an isomorphic copy of }F\\ \hline
 A_1,A_2&\mathbb{Z}/2\mathbb{Z}\\
 A_3,A_4&\mathcal W_2\\
 A_5,A_6&\mathcal W_2\times\mathbb{Z}/2\mathbb{Z}\\
 A_7&\mathcal W_3\\
 D_4&(\mathbb{Z}/2\mathbb{Z})^{4},\ \mathcal W_2\times\mathbb{Z}/2\mathbb{Z}\\
 D_5&(\mathbb{Z}/2\mathbb{Z})^{4},\ \mathcal W_2\times(\mathbb{Z}/2\mathbb{Z})^{2}\\
 D_6&(\mathbb{Z}/2\mathbb{Z})^{6},\ \mathcal W_2\times(\mathbb{Z}/2\mathbb{Z})^{3}\\
 D_7&(\mathbb{Z}/2\mathbb{Z})^{6},\ \mathcal W_2\times(\mathbb{Z}/2\mathbb{Z})^{4},\
       \mathcal W_2^2\times\mathbb{Z}/2\mathbb{Z}
\end{array}
\]
\end{lemma}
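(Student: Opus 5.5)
The plan is to read off each row directly from Theorem~\ref{thm:typeA} (type $A$) and Theorem~\ref{thm:typeD} (type $D$), so nothing new about cube stabilizers is needed.

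\emph{Type $A_{m}$.} By Theorem~\ref{thm:typeA}, $F$ embeds into $C_{W(A_m)}=J_{m+1}$ if and only if it embeds into $\prod_j\mathcal W_{a_j}$, where $m+1=\sum_j 2^{a_j}$ is the binary expansion. The binary expansions needed are
\[
 2=2^1,\ 3=2^1+2^0,\ 4=2^2,\ 5=2^2+2^0,\ 6=2^2+2^1,\ 7=2^2+2^1+2^0,\ 8=2^3.
\]
Since $\mathcal W_0=1$ and $\mathcal W_1\cong\mathbb Z/2\mathbb Z$, these give the following groups:
\begin{itemize}
 \item[] $\mathbb Z/2\mathbb Z$ for $A_1$ and $A_2$;
 \item[] $\mathcal W_2$ for $A_3$ and $A_4$;
 \item[] $\mathcal W_2\times\mathbb Z/2\mathbb Z$ for $A_5$ and $A_6$;
 \item[] $\mathcal W_3$ for $A_7$.
\end{itemize}

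\emph{Type $D_n$, $4\leq n\leq7$.} By Theorem~\ref{thm:typeD} and the subsequent remark that the inequalities in (B0) and (B1) may be replaced by equalities, $F$ embeds into some group of one of two forms:
\begin{itemize}
 \item[] (B0) $\prod_j B_{a_j}$ with $\sum_j 2^{a_j}=2\lfloor n/2\rfloor$;
 \item[] (B1) $\mathcal W_2\times\prod_j B_{a_j}$ with $\sum_j 2^{a_j}=n-3$.
\end{itemize}
In each case we enumerate the multisets of exponents and show that every resulting product embeds into one of the listed groups. Three embeddings do this.

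First, $B_0\cong\mathbb Z/2\mathbb Z$ and $B_1\cong(\mathbb Z/2\mathbb Z)^2$, and $B_0\times B_0\cong B_1$. So any product of factors $B_0,B_1$ is elementary abelian of rank $\sum_j 2^{a_j}$.

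Second, $B_2=\mathbb Z/2\mathbb Z\times\mathcal W_2$.

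Third, $B_2\times B_1\cong\mathcal W_2\times(\mathbb Z/2\mathbb Z)^3$. This is an instance of the general pattern that a factor $B_2$ contributes $\mathcal W_2\times\mathbb Z/2\mathbb Z$.

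With these, the cases go as follows.
\begin{itemize}
 \item[] $n=4$: (B0) has total $4$, giving $(\mathbb Z/2\mathbb Z)^4$ or $B_2$. (B1) has total $1$, giving $\mathcal W_2\times\mathbb Z/2\mathbb Z$.
 \item[] $n=5$: (B0) has total $4$, as for $n=4$. (B1) has total $2$, giving $\mathcal W_2\times(\mathbb Z/2\mathbb Z)^2$, which contains $B_2$.
 \item[] $n=6$: (B0) has total $6$, giving $(\mathbb Z/2\mathbb Z)^6$ or $B_2\times(\mathbb Z/2\mathbb Z)^2=\mathcal W_2\times(\mathbb Z/2\mathbb Z)^3$. (B1) has total $3$, giving $\mathcal W_2\times(\mathbb Z/2\mathbb Z)^3$.
 \item[] $n=7$: (B0) has total $6$, as for $n=6$. (B1) has total $4$. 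This gives $\mathcal W_2\times(\mathbb Z/2\mathbb Z)^4$ when all factors come from $B_0,B_1$, and $\mathcal W_2\times B_2=\mathcal W_2^2\times\mathbb Z/2\mathbb Z$ when the single factor is $B_2$.
\end{itemize}

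The only real work is ensuring the case enumeration is exhaustive, for example that no $B_3$ appears, since $2^3=8$ exceeds every total occurring here. This is a short finite check.
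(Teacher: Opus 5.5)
Your proposal is correct and follows the paper's proof: you read off the type $A$ rows from the binary expansion of $m+1$ via Theorem~\ref{thm:typeA}, and you enumerate the block patterns with parts $1,2,4$ in (B0) and (B1) of Theorem~\ref{thm:typeD}, using $B_0\cong\mathbb Z/2\mathbb Z$, $B_1\cong(\mathbb Z/2\mathbb Z)^2$ and $B_2=\mathcal W_2\times\mathbb Z/2\mathbb Z$. The one step you leave implicit, which the paper states explicitly, is for $D_7$: the (B0) group $\mathcal W_2\times(\mathbb Z/2\mathbb Z)^3$ is not itself on the list, so you must embed it into $\mathcal W_2\times(\mathbb Z/2\mathbb Z)^4$ by enlarging the elementary abelian factor.
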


\begin{proof}
For $A_m$, apply Theorem~\ref{thm:typeA} using the binary expansion of $m+1$. For $D_m$, apply the two block-size bounds in Theorem~\ref{thm:typeD} for $4\leq m\leq7$. The only possible block sizes are $1$, $2$, and $4$, with
\[
 B_0=\mathbb{Z}/2\mathbb{Z},\qquad
 B_1\cong(\mathbb{Z}/2\mathbb{Z})^{2},\qquad
 B_2=\mathcal W_2\times\mathbb{Z}/2\mathbb{Z}.
\]
The resulting products embed into the groups listed in the lemma by enlarging elementary abelian factors and using $(\mathbb{Z}/2\mathbb{Z})^{2}\hookrightarrow\mathcal W_2$. In particular, the group $\mathcal W_2\times(\mathbb{Z}/2\mathbb{Z})^{3}$ arising from (B0) for $D_7$ embeds into $\mathcal W_2\times(\mathbb{Z}/2\mathbb{Z})^{4}$, which is listed for type $D_7$.
\end{proof}

For $E_6$, we will also need to include a generator whose conjugation action is given by opposition. The following lemma identifies the resulting group.

\begin{lemma}\label{lem:oppositionextension}
Let $R$ be a connected spherical subset of the generating set of a Coxeter system. Let $\alpha$ be opposition on $R$, and suppose that $N\subsetneq R$ satisfies $\alpha(N)=N$. The automorphism of $C_N$ induced by $\alpha$ is given by
\[
 \sigma_I\longmapsto\sigma_{\alpha(I)}
 \qquad(I\in\mathcal I(N)).
\]
The assignments
\[
 \Phi:C_N\rtimes_\alpha\langle t\rangle
       \longrightarrow C_R,
 \qquad
 \Phi(x)=x\quad(x\in C_N),\qquad
 \Phi(t)=\sigma_R,\qquad t^2=1,
\]
define an embedding with image $\langle C_N,\sigma_R\rangle$.
\end{lemma}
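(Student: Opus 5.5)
The proof has three parts: show that $\alpha$ induces an automorphism of $C_N$, show that the relations of the semidirect product hold in $C_R$ so that $\Phi$ is a homomorphism, and prove injectivity by pushing forward to $W$.

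\emph{The automorphism.} Since $\alpha$ is a diagram automorphism of $R$ with $\alpha(N)=N$, it restricts to a diagram automorphism of $N$. It therefore preserves connectedness, sphericity, inclusion and orthogonality among subsets of $N$, so it permutes $\mathcal I(N)$. For the nesting relations we need $\alpha(\omega_J(I))=\omega_{\alpha(J)}(\alpha(I))$. This holds because $\alpha$ is conjugation by $\omega_R$, so $\omega_{\alpha(J)}=\omega_R\omega_J\omega_R^{-1}$. Hence $\alpha$ sends every defining relation of $C_N$ in \eqref{eq:presentation} to another defining relation. The assignment $\sigma_I\mapsto\sigma_{\alpha(I)}$ is therefore an endomorphism, and since $\alpha$ is an involution it is an involutive automorphism.

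\emph{$\Phi$ is a homomorphism.} By Proposition~\ref{prop:parabolic} we identify $C_N$ with $C_N\leq C_R$. The semidirect product has the presentation of $C_N$ together with $t$, $t^2=1$, and $t\sigma_It=\sigma_{\alpha(I)}$ for $I\in\mathcal I(N)$. Every $I\in\mathcal I(N)$ satisfies $I\subseteq N\subsetneq R$. The nesting relation in $C_R$ then gives $\sigma_R\sigma_I\sigma_R=\sigma_{\omega_R I\omega_R^{-1}}=\sigma_{\alpha(I)}$, and $\sigma_R^2=1$. So all relations are respected, $\Phi$ is a homomorphism, and its image is clearly $\langle C_N,\sigma_R\rangle$.

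\emph{Injectivity.} The restriction of $\Phi$ to $C_N$ is injective. It suffices to show that $\sigma_R\notin C_N$ and, more strongly, that $x\sigma_R\neq1$ for every $x\in C_N$. Apply $g_W$: we have $g_W(C_N)\subseteq W_N$. If $x\sigma_R=1$, then $\omega_R=g_W(x)^{-1}\in W_N$. This is impossible because every reduced expression of $\omega_R$ uses all of $R$ while $N\subsetneq R$, the same fact from \cite{BjornerBrenti} used in Lemma~\ref{lem:pure}. Every element of the semidirect product has the form $x$ or $xt$, so the kernel of $\Phi$ is trivial.

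The only point needing real care is the compatibility of $\alpha$ with the nesting relations, that is, with the opposition maps $\omega_J$ for $J\subseteq N$; the rest is formal.
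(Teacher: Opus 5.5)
Your proof is correct and follows the paper's: the nesting relation $\sigma_R\sigma_I\sigma_R=\sigma_{\alpha(I)}$ together with $\sigma_R^2=1$ makes $\Phi$ a homomorphism, and applying $g$ to $x\sigma_R$ and using $\omega_R\notin W_N$ rules out kernel elements of the form $xt$. The one difference is that the paper gets the automorphism of $C_N$ as the restriction of conjugation by $\sigma_R$ in $C_R$, which preserves $C_N$ because $\alpha$ permutes $\mathcal I(N)$; your separate check that $\alpha$ is compatible with the nesting relations is correct but not needed.
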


\begin{proof}
By Proposition~\ref{prop:parabolic}, we regard $C_N$ as a subgroup of $C_R$. The nesting relations give
\[
 \sigma_R\sigma_I\sigma_R
 =\sigma_{\alpha(I)}
 \qquad(I\in\mathcal I(N)).
\]
Thus, conjugation by $\sigma_R$ induces the required automorphism of $C_N$. Together with $\sigma_R^2=1$, this shows that $\Phi$ is a homomorphism onto $\langle C_N,\sigma_R\rangle$.

To prove injectivity, it remains to check elements of the form $xt$, with $x\in C_N$. We have
\[
 g_{W_R}(\Phi(xt))
 =g_{W_R}(x)\omega_R\in W_N\omega_R.
\]
Since $N\subsetneq R$, the longest element $\omega_R$ does not belong to $W_N$, so this product is not $1$. Hence, $\Phi(xt)\ne1$, proving injectivity.
\end{proof}

We now consider type $E_6$. By Theorem~\ref{thm:criterion}, the following lemma shows that every finite subgroup of $C_{W(E_6)}$ embeds into $\mathcal W_2\times(\mathbb{Z}/2\mathbb{Z})^{2}$.

\begin{lemma}\label{lem:E6}
Every cube stabilizer for the action of $C_{W(E_6)}$ on $\mathscr A_{W(E_6)}$ embeds into $\mathcal W_2\times(\mathbb{Z}/2\mathbb{Z})^{2}$.
\end{lemma}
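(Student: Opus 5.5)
We argue by cases on whether the compatible family $\mathcal A$ contains $S_6$. Lemma~\ref{lem:central} is not available in this case, because $\omega_{S_6}$ is not central in $W(E_6)$.

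\emph{Case 1: $S_6\notin\mathcal A$.} Let $M_1,\ldots,M_k$ be the maximal members of $\mathcal A$. By Lemma~\ref{lem:maxsupport}, $K_{\mathcal A}\cong\prod_i\operatorname{Stab}_{C_{M_i}}(Q_{\mathcal A_i})$, so $K_{\mathcal A}$ embeds into $\prod_i C_{M_i}$ and its finite subgroups are controlled by Lemma~\ref{lem:lowrank}. The $M_i$ are pairwise orthogonal connected proper subsets of $S_6$, so $T=\bigcup M_i$ is a proper subdiagram of $E_6$. We therefore enumerate the possible decompositions of $T$ into orthogonal connected components. Up to the diagram symmetry, the maximal configurations are:
\begin{itemize}
\item $D_5$;
\item $A_5$;
\item $A_4\times A_1$;
\item $A_2\times A_2\times A_1$;
\item $A_3\times A_1\times A_1$;
\item $A_1^4$ (if present);
\item smaller configurations.
\end{itemize}
For each configuration, we take the product of the bounds from Lemma~\ref{lem:lowrank}:
\begin{itemize}
\item $D_5$ gives $(\mathbb Z/2\mathbb Z)^4$ or $\mathcal W_2\times(\mathbb Z/2\mathbb Z)^2$;
\item $A_5$ gives $\mathcal W_2\times\mathbb Z/2\mathbb Z$;
\item $A_4\times A_1$ gives $\mathcal W_2\times\mathbb Z/2\mathbb Z$;
\item $A_3\times A_1^2$ gives $\mathcal W_2\times(\mathbb Z/2\mathbb Z)^2$;
\item $A_2^2\times A_1$ gives $(\mathbb Z/2\mathbb Z)^3$.
\end{itemize}
The only remaining point is that $(\mathbb Z/2\mathbb Z)^4$ embeds into $\mathcal W_2\times(\mathbb Z/2\mathbb Z)^2$. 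This holds because $(\mathbb Z/2\mathbb Z)^2\le D_8$. We also check that no four pairwise orthogonal vertices exist in $E_6$ other than inside the $A$-type configurations already listed; the maximum independent set of the $E_6$ diagram has size $3$ or $4$ ($\{s_1,s_2,s_6,\ldots\}$), and in every case the bound holds.

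\emph{Case 2: $S_6\in\mathcal A$.} Put $\mathcal D=\mathcal A\setminus\{S_6\}$. Every element of $K_{\mathcal A}$ is a vertex $v_{\mathcal U}$ or $v_{\mathcal U}\sigma_{S_6}$ with $\mathcal U\subseteq\mathcal D$. Let $N$ be the union of the maximal members of $\mathcal D$. Then $N\subsetneq S_6$, and by Lemma~\ref{lem:dimension} $N$ is a proper subset. If no element of $K_{\mathcal A}$ involves $\sigma_{S_6}$, then $K_{\mathcal A}=K_{\mathcal D}$, and Case~1 applies.

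Otherwise, pick such an element $v_{\mathcal U}\sigma_{S_6}$. Conjugation by its image $w_{\mathcal U}\omega_{S_6}$ preserves $\mathcal P_{\mathcal D}$, and $w_{\mathcal U}\in W_N$. Thus the automorphism $\alpha$ permutes $\mathcal P_{\mathcal D}$ up to conjugation by an element of $W_N$. Using the criterion of Theorem~\ref{thm:criterion}, $K_{\mathcal D}$ is a normal subgroup of index $2$ in $K_{\mathcal A}$.

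The plan is to show that $\alpha(N)=N$ and that $K_{\mathcal A}\le\langle C_N,\sigma_{S_6}\rangle\cong C_N\rtimes_\alpha\mathbb Z/2\mathbb Z$ by Lemma~\ref{lem:oppositionextension}. We would then bound $K_{\mathcal A}$ inside $K'\rtimes\langle t\rangle$ case by case over the $\alpha$-stable proper subsets $N$. Since $\alpha$ swaps $s_2\leftrightarrow s_6$ and $s_3\leftrightarrow s_5$, the relevant sets are the following:
\begin{itemize}
\item $\{s_1,s_3,s_4,s_5\}$ of type $D_4$;
\item $\{s_2,\ldots,s_6\}$ of type $A_5$;
\item $\{s_3,s_4,s_5\}$;
\item $\{s_1,s_4\}\cup\{s_2,s_6\}$;
\item further small $\alpha$-stable subsets.
\end{itemize}
For each $N$, we would identify $H_{\mathcal A}$ directly in $W$: it is generated by $H_{\mathcal D}$ and one element $w\omega_{S_6}$. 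Its order is $2|H_{\mathcal D}|\le 2\cdot 16$, which is already at most $32=|\mathcal W_2\times(\mathbb Z/2\mathbb Z)^2|$. The task is to identify the isomorphism type. For example, if $N=\{s_2,\ldots,s_6\}$, then $\alpha$ acts as opposition of $A_5$. In that case $\langle C_N,\sigma_{S_6}\rangle$ behaves like the type-$A$ reversal by $\sigma_N$ composed with a central-like element, and the stabilizer becomes (subgroup of $\mathcal W_2\times\mathbb Z/2$)$\times\mathbb Z/2$.

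\emph{Main obstacle.} Case~2 is the main difficulty. There we must show that the extension by $\sigma_{S_6}$ stays within $\mathcal W_2\times(\mathbb Z/2\mathbb Z)^2$ rather than producing, for instance, $\mathcal W_3$ or $D_8\times D_8$. The approach I would try first is to note that $\omega_{S_6}\omega_{N}$ centralizes $W_N$ when $N$ is $\alpha$-stable and $\alpha|_N$ coincides with opposition on the components of $N$. In that situation the extension splits as $K_{\mathcal D}\times\mathbb Z/2\mathbb Z$, which reduces to Case~1 with a bound of order $16$. The remaining $\alpha$-stable $N$ (such as $D_4$ or $A_1^3$ containing $s_1$) would be handled by explicit computation with the signed-permutation-like model of $H_{\mathcal A}$.
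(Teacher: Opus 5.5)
Your Case~1 is essentially correct. It suffices to treat the four maximal proper subdiagrams obtained by deleting one vertex, namely $A_5$, $D_5$, $A_4\times A_1$ and $A_2\times A_2\times A_1$; smaller unions reduce to these by Proposition~\ref{prop:parabolic}. Two slips there are harmless: $A_3\times A_1\times A_1$ does not occur in $E_6$, and there are at most three pairwise orthogonal vertices, since each of the adjacent pairs $\{s_1,s_4\}$, $\{s_2,s_3\}$, $\{s_5,s_6\}$ contains at most one of them.

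The gap is in Case~2, which is the substance of the lemma. First, you only announce $\theta(N)=N$. It does follow from your own observation: $w_{\mathcal U}\omega_{S_6}$ permutes the maximal subgroups $W_{M_i}$ in $\mathcal P_{\mathcal D}$ and $w_{\mathcal U}\in W_N$, so $W_{\theta(M_i)}=w_{\mathcal U}^{-1}W_{M_j}w_{\mathcal U}\le W_N$. More seriously, your splitting device only handles $N=I_1,I_4,I_{345},I_{23456}$. That device is the central involution $\sigma_{M_1}\cdots\sigma_{M_k}\sigma_{S_6}$, available when $\theta$ acts on each component of $N$ as its opposition. It does not apply to $N=I_{1345}$, where opposition on $D_4$ is trivial but $\theta$ exchanges $s_3$ and $s_5$. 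Nor does it apply to any of the eight disconnected $\theta$-stable sets, where $\theta$ exchanges two components. (Formally it also misses $I_{14}$, though there $\sigma_{S_6}$ already centralizes $C_N$.) For these cases the order estimate $|K_{\mathcal A}|\le 2|K_{\mathcal D}|\le 32$ proves nothing. A group of order $32$ embeds into $\mathcal W_2\times(\mathbb Z/2\mathbb Z)^2$ only if it is isomorphic to it. For instance, $(\mathbb Z/2\mathbb Z)^5$ does not embed, because every elementary abelian subgroup of $\mathcal W_2\times(\mathbb Z/2\mathbb Z)^2$ has rank at most $4$. Deferring these cases to an unspecified ``explicit computation'' leaves the lemma unproved.

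The missing idea is to use Lemma~\ref{lem:oppositionextension} a second time. It embeds $\langle C_N,\sigma_{S_6}\rangle\cong C_N\rtimes_\theta\mathbb Z/2\mathbb Z$ into a generalized cactus group whose finite subgroups are already classified.
\begin{enumerate}
\item For $N=I_{1345}$, attach a new vertex to $s_1$. The resulting diagram $\widehat N$ has type $D_5$, and its opposition exchanges $s_3,s_5$ and fixes $s_1,s_4$. So $\sigma_I\mapsto\sigma_I$, $\sigma_{S_6}\mapsto\sigma_{\widehat N}$ embeds the extension into $C_{W(\widehat N)}$, and the $D_5$ row of Lemma~\ref{lem:lowrank} applies.
\item For disconnected $N=M\sqcup\theta(M)\sqcup L$, take $M$ of type $A_m$ with $m\le 2$ and $L$ fixed pointwise by $\theta$. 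The extension is $\bigl((C_M\times C_M)\rtimes\mathbb Z/2\mathbb Z\bigr)\times C_L$. Place the two copies of $M$ at the two ends of an $A_{2m+1}$ diagram, which its opposition exchanges; this embeds the first factor into $C_{W(A_{2m+1})}$. Type $A$ then bounds its finite subgroups by $\mathcal W_2$ or $\mathcal W_2\times\mathbb Z/2\mathbb Z$, and $C_L$ contributes at most one more factor $\mathbb Z/2\mathbb Z$.
\end{enumerate}
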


\begin{proof}
After translating the cube, it suffices to consider $K_{\mathcal A}$ for a compatible family $\mathcal A$.

We first consider the case $S_6\notin\mathcal A$ by decomposing $K_{\mathcal A}$ over the maximal members $M_1,\ldots,M_k$ of $\mathcal A$. Let $\mathcal A_i=\{I\in\mathcal A:I\subseteq M_i\}$ and let $K_i=\operatorname{Stab}_{C_{M_i}}(Q_{\mathcal A_i})$. Lemma~\ref{lem:maxsupport} gives $K_{\mathcal A}\cong\prod_iK_i$. Moreover, we have $\bigcup_iM_i\subsetneq S_6$ by Lemma~\ref{lem:dimension}, applied to the compatible family $\mathcal A\cup\{S_6\}$. We distinguish the possibilities according to the largest cardinality of an $M_i$.

If some $M_i$ has five vertices, it is the only maximal member. Its type is $A_5$ or $D_5$, and every corresponding group in Lemma~\ref{lem:lowrank} embeds into $\mathcal W_2\times(\mathbb{Z}/2\mathbb{Z})^{2}$.

Suppose that the largest cardinality is four. Such an $M_i$ has type $A_4$ or $D_4$. For $A_4$, Lemma~\ref{lem:lowrank} gives $K_i\hookrightarrow\mathcal W_2$. The other maximal members occupy at most one vertex, so their contribution gives $K_{\mathcal A}\hookrightarrow\mathcal W_2\times\mathbb{Z}/2\mathbb{Z}$. The only subdiagram of type $D_4$ is $I_{1345}$. Both remaining vertices are adjacent to it, so it is the only maximal member. Its two bounds in Lemma~\ref{lem:lowrank} both embed into $\mathcal W_2\times(\mathbb{Z}/2\mathbb{Z})^{2}$.

If the largest cardinality is three, such an $M_i$ has type $A_3$ and $K_i\hookrightarrow\mathcal W_2$. The other maximal members occupy at most two vertices. Each of their stabilizers has order at most two, so together they contribute at most two factors $\mathbb{Z}/2\mathbb{Z}$. Again, $K_{\mathcal A}\hookrightarrow\mathcal W_2\times(\mathbb{Z}/2\mathbb{Z})^{2}$.

Finally, suppose that every $M_i$ has at most two vertices. Each $K_i$ then has order at most two. To bound the number of these factors, choose one vertex from each $M_i$. No two chosen vertices are adjacent, since the $M_i$ are pairwise orthogonal. Partition $S_6$ into the three pairs $\{s_1,s_4\}$, $\{s_2,s_3\}$, and $\{s_5,s_6\}$. The two vertices in each pair are adjacent. Since no two chosen vertices are adjacent, each pair contains at most one chosen vertex. Thus $k\leq3$, and
\[
 K_{\mathcal A}\hookrightarrow(\mathbb{Z}/2\mathbb{Z})^{3}
 \hookrightarrow\mathcal W_2\times(\mathbb{Z}/2\mathbb{Z})^{2}.
\]

We now consider the case $S_6\in\mathcal A$. Let $\mathcal D=\mathcal A\setminus\{S_6\}$, let $M_1,\ldots,M_k$ be the maximal members of $\mathcal D$, and put $N=\bigcup_iM_i$. If $\mathcal D=\varnothing$, then $K_{\mathcal A}=\langle\sigma_{S_6}\rangle\cong\mathbb{Z}/2\mathbb{Z}$, so assume that $\mathcal D\ne\varnothing$.

We first separate the elements represented by vertices whose expressions do not contain $\sigma_{S_6}$. For $\mathcal U\subseteq\mathcal D$, Theorem~\ref{thm:criterion} gives
\[
 v_{\mathcal U}\in K_{\mathcal A}
 \quad\Longleftrightarrow\quad
 v_{\mathcal U}\in K_{\mathcal D}.
\]
Indeed, we have $\mathcal P_{\mathcal A}=\mathcal P_{\mathcal D}\cup\{W(E_6)\}$. Conjugation by $w_{\mathcal U}$ fixes $W(E_6)$, so it permutes the subgroups in $\mathcal P_{\mathcal A}$ if and only if it permutes those in $\mathcal P_{\mathcal D}$. If no element $v_{\mathcal U}\sigma_{S_6}$ belongs to $K_{\mathcal A}$, then $K_{\mathcal A}=K_{\mathcal D}$, and the preceding case applies.

Otherwise, we show that the existence of such an element forces $N$ to be preserved by opposition on $S_6$. Write
\[
 \theta=(s_2\ s_6)(s_3\ s_5),
 \qquad
 \theta(s_1)=s_1,\quad\theta(s_4)=s_4,
\]
and choose $v_{\mathcal U}\sigma_{S_6}\in K_{\mathcal A}$. By Theorem~\ref{thm:criterion}, $w_{\mathcal U}\omega_{S_6}$ permutes $\{W_I:I\in\mathcal A\}$. It fixes $W(E_6)$, so it permutes the subgroups in $\{W_I:I\in\mathcal D\}$. Since conjugation preserves inclusion of subgroups, it also permutes the maximal members of this collection, namely $W_{M_1},\ldots,W_{M_k}$. Consequently, for every $i$ there is a $j$ such that $w_{\mathcal U}W_{\theta(M_i)} w_{\mathcal U}^{-1}=W_{M_j}$. Every factor in the product defining $w_{\mathcal U}$ belongs to $W_N$, so $w_{\mathcal U}\in W_N$. Hence,
\[
 W_{\theta(M_i)}
 =w_{\mathcal U}^{-1}W_{M_j}w_{\mathcal U}
 \leq W_N.
\]
Containment of standard parabolic subgroups gives $\theta(M_i)\subseteq N$. Taking unions yields $\theta(N)\subseteq N$, and applying the involution $\theta$ gives $\theta(N)=N$. Moreover, $N\subsetneq S_6$ by Lemma~\ref{lem:dimension}.

Every vertex of $Q_{\mathcal A}$ has the form $v_{\mathcal U}$ or $v_{\mathcal U}\sigma_{S_6}$, with $\mathcal U\subseteq\mathcal D$. Since $v_{\mathcal U}\in C_N$ for every $\mathcal U\subseteq\mathcal D$, all vertices of $Q_{\mathcal A}$ lie in $\langle C_N,\sigma_{S_6}\rangle$. Therefore, Lemma~\ref{lem:oppositionextension} gives
\[
 K_{\mathcal A}
 \leq\langle C_N,\sigma_{S_6}\rangle
 \cong C_N\rtimes_\theta\mathbb{Z}/2\mathbb{Z}.
\]

To bound the finite subgroups of this extension, we enumerate the possible sets $N$. The four $\theta$-orbits in $S_6$ give fourteen nonempty proper invariant subsets. The connected ones are
\begin{equation}\label{eq:E6connected}
 I_1,\quad I_4,\quad I_{14},\quad
 I_{345},\quad I_{1345},\quad I_{23456}.
\end{equation}
The disconnected ones are
\begin{align*}
 &I_2\sqcup I_6,\quad
 I_3\sqcup I_5,\quad
 I_{23}\sqcup I_{56},\\
 &I_1\sqcup I_2\sqcup I_6,\quad
 I_1\sqcup I_3\sqcup I_5,\quad
 I_1\sqcup I_{23}\sqcup I_{56},\\
 &I_2\sqcup I_4\sqcup I_6,\quad
 I_{14}\sqcup I_2\sqcup I_6.
\end{align*}

For $N=I_1$, $I_4$, or $I_{14}$, $\theta$ fixes $N$ pointwise, so $\langle C_N,\sigma_{S_6}\rangle \cong C_N\times\mathbb{Z}/2\mathbb{Z}$. Let $H$ be a finite subgroup of this direct product, and let $H_N$ be its projection onto $C_N$. By Lemma~\ref{lem:lowrank}, $H_N\hookrightarrow\mathbb{Z}/2\mathbb{Z}$. Hence, $H\leq H_N\times\mathbb{Z}/2\mathbb{Z}\hookrightarrow(\mathbb{Z}/2\mathbb{Z})^{2}$.

For $N=I_{345}$ or $I_{23456}$, we also reduce the extension to a direct product. These subdiagrams have type $A_3$ and $A_5$, respectively, and $\theta|_N$ is opposition on $N$. Let $a=\sigma_N$ and $t=\sigma_{S_6}$. The involutions $a$ and $t$ commute and induce the same automorphism of $C_N$. Thus, $z=at$ is a central involution. By Lemma~\ref{lem:oppositionextension}, $C_N\cap\langle t\rangle=\{1\}$, so $t\notin C_N$. Since $a\in C_N$ and $t=az$, we also have $z\notin C_N$. Therefore, we obtain $\langle C_N,\sigma_{S_6}\rangle\cong C_N\times\mathbb{Z}/2\mathbb{Z}$. Let $H$ be a finite subgroup of $C_N\times\mathbb{Z}/2\mathbb{Z}$, and let $H_N$ be its projection onto $C_N$. Then $H_N$ is finite and $H\leq H_N\times\mathbb{Z}/2\mathbb{Z}$. By Lemma~\ref{lem:lowrank}, $H_N$ embeds into $\mathcal W_2$ when $N=I_{345}$ and into $\mathcal W_2\times\mathbb{Z}/2\mathbb{Z}$ when $N=I_{23456}$. Consequently, $H$ embeds into $\mathcal W_2\times\mathbb{Z}/2\mathbb{Z}$ or $\mathcal W_2\times(\mathbb{Z}/2\mathbb{Z})^{2}$, respectively.

For $N=I_{1345}$, we embed the extension into the generalized cactus group of type $D_5$. Attach a new vertex only to $s_1$ to obtain a diagram $\widehat N$ of type $D_5$. Opposition on $\widehat N$ exchanges $s_3$ and $s_5$ and fixes $s_1$ and $s_4$, so its restriction to $N$ is $\theta|_N$. Thus, Lemma~\ref{lem:oppositionextension} gives an embedding
\[
 C_N\rtimes_\theta\langle t\rangle
 \hookrightarrow C_{W(\widehat N)},
 \qquad
 \sigma_I\longmapsto\sigma_I,\quad
 t\longmapsto\sigma_{\widehat N}.
\]
Both groups in the $D_5$ row of Lemma~\ref{lem:lowrank} embed into $\mathcal W_2\times(\mathbb{Z}/2\mathbb{Z})^{2}$, using $(\mathbb{Z}/2\mathbb{Z})^{2}\hookrightarrow\mathcal W_2$.

Suppose that $N$ is disconnected. In each case, write
\[
 N=M\sqcup\theta(M)\sqcup L,
\]
where $M$ and $\theta(M)$ are components of type $A_m$, with $m=1$ or $2$, and $L$ is either empty or a component of type $A_1$ or $A_2$ fixed pointwise by $\theta$. We use the convention $C_\varnothing=\{1\}$. We will embed $(C_M\times C_M)\rtimes\mathbb{Z}/2\mathbb{Z}$ into $C_{W(A_{2m+1})}$.

Identifying $\theta(M)$ with $M$ by $\theta$, conjugation by $\sigma_{S_6}$ exchanges the two copies of $C_M$ and acts trivially on $C_L$. Hence, 
\[
 \langle C_N,\sigma_{S_6}\rangle
 \cong
 \bigl((C_M\times C_M)\rtimes\mathbb{Z}/2\mathbb{Z}\bigr)\times C_L.
\]

To embed the first factor, take a diagram of type $A_{2m+1}$ with generating set $R=\{t_1,\ldots,t_{2m+1}\}$ and edges $\{t_i,t_{i+1}\}$. The subsets $R_-=\{t_1,\ldots,t_m\}$ and $R_+=\{t_{m+2},\ldots,t_{2m+1}\}$ are orthogonal and exchanged by opposition $\rho(t_i)=t_{2m+2-i}$. Choose a diagram isomorphism from $M$ to $R_-$, inducing an isomorphism $\iota_-:C_M\to C_{R_-}$, and define $\iota_+(x)=\sigma_R\iota_-(x)\sigma_R \in C_{R_+}$. The two images commute, and conjugation by $\sigma_R$ exchanges them. Let $N'=R_-\cup R_+$. Then $N'=R\setminus\{t_{m+1}\}$ is a proper subset of $R$ and satisfies $\rho(N')=N'$. Moreover, $R_-\perp R_+$ gives $C_{N'}\cong C_{R_-}\times C_{R_+}$. Applying Lemma~\ref{lem:oppositionextension} to $N'\subsetneq R$ and using the identifications $\iota_-$ and $\iota_+$ gives the embedding
\[
 (C_M\times C_M)\rtimes\mathbb{Z}/2\mathbb{Z}\hookrightarrow C_R,
 \qquad
 ((x,y),t^\varepsilon)
 \longmapsto
 \iota_-(x)\iota_+(y)\sigma_R^\varepsilon.
\]

Let $H$ be a finite subgroup of $\bigl((C_M\times C_M)\rtimes\mathbb{Z}/2\mathbb{Z}\bigr)\times C_L$, and let $H_1$ and $H_L$ be its projections onto the two factors. By Lemma~\ref{lem:lowrank}, $H_1$ embeds into $\mathcal W_2$ for $m=1$ and into $\mathcal W_2\times\mathbb{Z}/2\mathbb{Z}$ for $m=2$. Since $L$ is empty or has type $A_1$ or $A_2$, we also have $H_L\hookrightarrow\mathbb{Z}/2\mathbb{Z}$. Therefore,
\[
 H\leq H_1\times H_L
 \hookrightarrow\mathcal W_2\times(\mathbb{Z}/2\mathbb{Z})^{2}.
\]
\end{proof}

The following lemma combines the preceding results for types $A$, $D$, and $E_6$ in a form that we will use to analyze cube stabilizers in types $E_7$ and $E_8$.

\begin{lemma}\label{lem:rank6}
Let $M$ have type $A$, $D$, or $E_6$, and put $m=|M|\leq6$. Every finite subgroup of $C_M$ embeds into at least one of
\[
 (\mathbb{Z}/2\mathbb{Z})^{m},\qquad
 \mathcal W_2\times(\mathbb{Z}/2\mathbb{Z})^{m-3},
\]
where the second group is used only for $m\geq3$.
\end{lemma}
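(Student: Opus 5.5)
This is a case check. For each possible type of $M$ I compare the bounds already obtained, namely Theorem~\ref{thm:typeA}, Lemma~\ref{lem:lowrank}, Theorem~\ref{thm:typeD} and Lemma~\ref{lem:E6}, with the two target groups
\[
 G_0(m)=(\mathbb{Z}/2\mathbb{Z})^{m},\qquad G_1(m)=\mathcal W_2\times(\mathbb{Z}/2\mathbb{Z})^{m-3}.
\]
Throughout I use that $\mathbb{Z}/2\mathbb{Z}\hookrightarrow(\mathbb{Z}/2\mathbb{Z})^{2}\hookrightarrow\mathcal W_2$, and that enlarging an elementary abelian factor preserves embeddability.

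\emph{Type $A_m$, $1\leq m\leq6$.} Lemma~\ref{lem:lowrank} gives one bound for each $m$.
\begin{itemize}
\item $A_1$ and $A_2$: the bound is $\mathbb{Z}/2\mathbb{Z}$, which embeds into $G_0(m)$.
\item $A_3$ and $A_4$: the bound is $\mathcal W_2$, which embeds into $G_1(3)=\mathcal W_2$ and into $G_1(4)=\mathcal W_2\times\mathbb{Z}/2\mathbb{Z}$.
\item $A_5$ and $A_6$: the bound is $\mathcal W_2\times\mathbb{Z}/2\mathbb{Z}$, which embeds into $G_1(5)$ and $G_1(6)$.
\end{itemize}
Type $A_m$ is therefore immediate.

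\emph{Type $D_m$, $m\in\{4,5,6\}$.} I take the rows of Lemma~\ref{lem:lowrank}, or equivalently (B0) and (B1) of Theorem~\ref{thm:typeD} with the sums taken as equalities.
\begin{itemize}
\item $D_4$: the bounds are $(\mathbb{Z}/2\mathbb{Z})^4=G_0(4)$ and $\mathcal W_2\times\mathbb{Z}/2\mathbb{Z}=G_1(4)$.
\item $D_5$: the bounds are $(\mathbb{Z}/2\mathbb{Z})^4\hookrightarrow G_0(5)$ and $\mathcal W_2\times(\mathbb{Z}/2\mathbb{Z})^2=G_1(5)$.
\item $D_6$: the bounds are $(\mathbb{Z}/2\mathbb{Z})^6=G_0(6)$ and $\mathcal W_2\times(\mathbb{Z}/2\mathbb{Z})^3=G_1(6)$.
\end{itemize}
The main thing to check is that the $D$ rows of Lemma~\ref{lem:lowrank} are not too coarse. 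In (B0) the factor $B_2=\mathcal W_2\times\mathbb{Z}/2\mathbb{Z}$ occupies four coordinates. For $D_6$, (B0) therefore produces $B_2\times B_1\cong\mathcal W_2\times(\mathbb{Z}/2\mathbb{Z})^3$, which is $G_1(6)$. For $D_4$, (B0) produces $B_2\cong G_1(4)$. For $D_5$, the sum in (B0) is at most $4$, so (B0) again gives at most $B_2=\mathcal W_2\times\mathbb{Z}/2\mathbb{Z}\hookrightarrow G_1(5)$. In every case $G_1(m)$ has one $\mathbb{Z}/2\mathbb{Z}$ fewer than needed to absorb an extra $B_0$ in (B1), and the count still works, because (B1) contributes $\mathcal W_2$ for three coordinates and a product of $B_{a_j}$ summing to $m-3$.

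I expect this bookkeeping to be the only place that needs care. The key inequality is that each $B_a$ embeds into $(\mathbb{Z}/2\mathbb{Z})^{2^a}$ when $a\leq1$, and into $\mathcal W_2\times(\mathbb{Z}/2\mathbb{Z})^{2^a-3}$ when $a=2$. Since $m\leq6$, at most one $B_2$ or one $\mathcal W_2$ factor can occur, and no $\mathcal W_2^2$ can occur, so the sums match coordinate by coordinate.

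\emph{Type $E_6$.} Here $m=6$, and Lemma~\ref{lem:E6} together with Theorem~\ref{thm:criterion} shows that every finite subgroup embeds into $\mathcal W_2\times(\mathbb{Z}/2\mathbb{Z})^2\hookrightarrow G_1(6)$. This completes the case check.
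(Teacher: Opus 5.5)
Your proof is correct and follows the paper's argument: for types $A$ and $D$ you read the bounds off Lemma~\ref{lem:lowrank}, and for $E_6$ you use Theorem~\ref{thm:criterion} and Lemma~\ref{lem:E6} to get $\mathcal W_2\times(\mathbb{Z}/2\mathbb{Z})^{2}\hookrightarrow\mathcal W_2\times(\mathbb{Z}/2\mathbb{Z})^{3}$. Your extra recount via (B0) and (B1) of Theorem~\ref{thm:typeD} is redundant, and the sentence about ``one $\mathbb{Z}/2\mathbb{Z}$ fewer than needed to absorb an extra $B_0$'' is muddled, but this does no harm because the count itself is right: at most one $B_2$ occurs when $m\leq6$, and no $\mathcal W_2^2$ occurs.
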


\begin{proof}
For types $A$ and $D$, the assertion follows from Lemma~\ref{lem:lowrank}. For type $E_6$, Theorem~\ref{thm:criterion} places the finite subgroup in a cube stabilizer. Lemma~\ref{lem:E6} then gives an embedding into $\mathcal W_2\times(\mathbb{Z}/2\mathbb{Z})^{2}$, which embeds into $\mathcal W_2\times(\mathbb{Z}/2\mathbb{Z})^{3}$.
\end{proof}

We are now ready to analyze the cube stabilizers for the actions of $C_{W(E_7)}$ and $C_{W(E_8)}$.

\begin{lemma}\label{lem:E78}
Every cube stabilizer in $C_{W(E_7)}$ embeds into one of
\[
 (\mathbb{Z}/2\mathbb{Z})^{7},\qquad
 \mathcal W_2\times(\mathbb{Z}/2\mathbb{Z})^{4}.
\]
Every cube stabilizer in $C_{W(E_8)}$ embeds into one of
\[
 (\mathbb{Z}/2\mathbb{Z})^{8},\qquad
 \mathcal W_2\times(\mathbb{Z}/2\mathbb{Z})^{5},\qquad
 \mathcal W_2^2\times(\mathbb{Z}/2\mathbb{Z})^{2},\qquad
 \mathcal W_3\times\mathbb{Z}/2\mathbb{Z}.
\]
\end{lemma}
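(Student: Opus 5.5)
Because $\omega_{S_7}$ and $\omega_{S_8}$ are central, I would start with Lemma~\ref{lem:central}. For a compatible family $\mathcal A$ in $E_r$ ($r=7,8$), it gives $K_{\mathcal A}\cong K_{\mathcal D}\times(\mathbb{Z}/2\mathbb{Z})^{\delta}$, where $\mathcal D=\mathcal A\setminus\{S_r\}$. Every maximal member of $\mathcal D$ is a proper subset of $S_r$. By Lemma~\ref{lem:dimension} applied to $\mathcal D\cup\{S_r\}$, the union $N$ of these maximal members $M_1,\ldots,M_k$ is a proper subset of $S_r$. Lemma~\ref{lem:maxsupport} then gives $K_{\mathcal D}\cong\prod_i K_i$ with $K_i\leq C_{M_i}$ finite. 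It therefore suffices to bound $\prod_iK_i\times\mathbb{Z}/2\mathbb{Z}$, where the $M_i$ are pairwise orthogonal connected subdiagrams and $\sum_i|M_i|\leq r-1$. (If $\delta=0$, the extra factor only enlarges the target.)

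Each $M_i$ is a proper connected subdiagram of $E_r$, so it has type $A_m$ ($m\leq r-1$), $D_m$ ($m\leq r-1$), $E_6$, or, when $r=8$, $E_7$. For $|M_i|\leq6$ of type $A$, $D$ or $E_6$, I would apply Lemma~\ref{lem:rank6}. Then $K_i$ embeds into $(\mathbb{Z}/2\mathbb{Z})^{|M_i|}$ or into $\mathcal W_2\times(\mathbb{Z}/2\mathbb{Z})^{|M_i|-3}$. The heuristic is that each vertex contributes one factor $\mathbb{Z}/2\mathbb{Z}$, while each $\mathcal W_2$ uses up three vertices. For the larger components:
\begin{itemize}
\item $A_7$ gives $\mathcal W_3$, using seven vertices.
\item $D_7$ gives $(\mathbb{Z}/2\mathbb{Z})^6$, $\mathcal W_2\times(\mathbb{Z}/2\mathbb{Z})^4$ or $\mathcal W_2^2\times\mathbb{Z}/2\mathbb{Z}$, by Lemma~\ref{lem:lowrank}.
\item $E_7$ is handled by the $r=7$ statement, proved first.
\end{itemize}
All of these again cost one factor per vertex, or three vertices per $\mathcal W_2$. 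The $D_7$ option $\mathcal W_2^2\times\mathbb{Z}/2\mathbb{Z}$ uses seven vertices.

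Adding the central factor, $K_{\mathcal A}$ embeds into a product of the shape $\mathcal W_2^{e}\times(\mathbb{Z}/2\mathbb{Z})^{f}$ with $3e+f\leq r$, or, for $r=8$ only, into $\mathcal W_3\times\mathbb{Z}/2\mathbb{Z}$ (coming from $M=A_7$). Then I would reduce to the listed groups using $(\mathbb{Z}/2\mathbb{Z})^2\hookrightarrow\mathcal W_2$ and the enlargement of elementary abelian factors. For $r=7$ the possibilities are $e\leq1$ with $3e+f\leq7$, since $e=2$ would need two disjoint orthogonal $\mathcal W_2$-producing components. For $r=8$ with $e=2$ and $f\leq2$, the target is $\mathcal W_2^2\times(\mathbb{Z}/2\mathbb{Z})^2$.

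\textbf{Main obstacle.} The difficulty is ruling out configurations that beat this count. The most dangerous is two $\mathcal W_2$ factors in $E_7$. By Lemma~\ref{lem:rank6}, $\mathcal W_2$ needs a component of size at least $3$ of type $A_{\geq3}$ or $D$, so two such factors need either $D_7$ or two orthogonal components of size at least $3$. Neither fits inside a proper subset of $E_7$: $D_7$ is not a subdiagram of $E_7$. For two disjoint, non-adjacent connected subsets of size at least $3$ in $E_7$, only $\{s_2,s_3,s_4\}$-type configurations arise, and these always meet or are adjacent. I would check this by a short enumeration of pairs of orthogonal connected triples in the $E_7$ and $E_8$ diagrams.

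The other subtle point is the bound on elementary abelian rank: mixing a $\mathcal W_2$-component with the central factor must not exceed the listed groups. The constraint $\sum|M_i|\leq r-1$ plus the central factor yields exactly $3e+f\leq r$, so this works. The $E_7$ component inside $E_8$ gives $\mathcal W_2\times(\mathbb{Z}/2\mathbb{Z})^5$ or $(\mathbb{Z}/2\mathbb{Z})^8$ after adding the central factor, both of which are listed.
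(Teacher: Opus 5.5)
Your proposal is correct and follows essentially the same route as the paper. You split off the central factor and decompose over maximal members via Lemmas~\ref{lem:central} and~\ref{lem:maxsupport}, bound each component by Lemma~\ref{lem:rank6} (or by Lemma~\ref{lem:lowrank} and the $E_7$ case for seven-vertex components in $E_8$), count with $3e+f\leq r$, and exclude two $\mathcal W_2$ factors in $E_7$ because two orthogonal connected subsets of $S_7$ cannot both have at least three vertices. One small slip is harmless: $E_6$ components also yield a $\mathcal W_2$ factor, but your exclusion argument only uses $|M_i|\geq3$. Your uniform bound $\sum_i|M_i|\leq r-1$, from Lemma~\ref{lem:dimension} applied to $\mathcal D\cup\{S_r\}$, neatly replaces the paper's adjacency argument that a seven-vertex component must be the only maximal member.
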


\begin{proof}
We first decompose the stabilizer using proper connected subdiagrams. As in Lemma~\ref{lem:E6}, it suffices to consider $K_{\mathcal A}$. Let $r\in\{7,8\}$, $\mathcal D=\mathcal A\setminus\{S_r\}$, and let $\delta=1$ if $S_r\in\mathcal A$ and $\delta=0$ otherwise. Let $M_1,\ldots,M_k$ be the maximal members of $\mathcal D$, and put
\[
 m_i=|M_i|,\qquad
 \mathcal D_i=\{I\in\mathcal D:I\subseteq M_i\},
 \qquad
 K_i=\operatorname{Stab}_{C_{M_i}}(Q_{\mathcal D_i}).
\]
Since $\omega_{S_r}$ is central, Lemmas~\ref{lem:central} and~\ref{lem:maxsupport} give $K_{\mathcal A} \cong\prod_iK_i\times(\mathbb{Z}/2\mathbb{Z})^{\delta}$. Note that the $M_i$ are pairwise disjoint. If $\delta=1$, their union is a proper subset of $S_r$ by Lemma~\ref{lem:dimension}. Consequently, $\sum_i m_i+\delta\leq r$.

Suppose first that every $m_i\leq6$. We count the factors for which the second group in Lemma~\ref{lem:rank6} is used. Every $M_i$ has type $A$, $D$, or $E_6$, so that lemma applies. Choose one of its two groups for each $K_i$, let $J$ be the indices for which the second group is chosen, and let
\[
 b=\delta+\sum_{i\notin J}m_i
          +\sum_{i\in J}(m_i-3).
\]
It follows that
\[
 K_{\mathcal A}
 \hookrightarrow\mathcal W_2^{|J|}\times(\mathbb{Z}/2\mathbb{Z})^{b},
 \qquad
 3|J|+b=\delta+\sum_i m_i\leq r.
\]

For type $E_7$, we show that $|J|\leq1$. Each $i\in J$ satisfies $m_i\geq3$, and the $M_i$ are pairwise orthogonal. Thus, it suffices to show that two orthogonal connected subsets of $S_7$ cannot both have at least three vertices.

Deleting $s_4$ leaves the components $\{s_1\}$, $\{s_2,s_3\}$, and $\{s_5,s_6,s_7\}$. If neither subset contains $s_4$, only the last component can contain a connected subset with at least three vertices, and it cannot contain two disjoint such subsets. If one subset contains $s_4$, every orthogonal subset avoids $s_4$ and its neighbors $s_1,s_3,s_5$. Thus, it lies in $\{s_2\}$ or $\{s_6,s_7\}$. Therefore, $|J|\leq1$. Together with $3|J|+b\leq7$, this gives
\[
 K_{\mathcal A}\hookrightarrow(\mathbb{Z}/2\mathbb{Z})^{7}
 \quad\text{or}\quad
 K_{\mathcal A}\hookrightarrow\mathcal W_2\times(\mathbb{Z}/2\mathbb{Z})^{4}.
\]

For type $E_8$, still assuming that every $m_i\leq6$, the inequality $3|J|+b\leq8$ gives $|J|\leq2$. If $|J|=0$, $1$, or $2$, then $b\leq8$, $5$, or $2$, respectively. Hence, $K_{\mathcal A}$ embeds into one of
\[
 (\mathbb{Z}/2\mathbb{Z})^{8},\qquad
 \mathcal W_2\times(\mathbb{Z}/2\mathbb{Z})^{5},\qquad
 \mathcal W_2^2\times(\mathbb{Z}/2\mathbb{Z})^{2}.
\]

It remains to consider $E_8$ when some $M_i$ has seven vertices. Such an $M_i$ is the only maximal member, because the remaining vertex is adjacent to it. The possibilities are
\[
 I_{2345678}\ (A_7),\qquad
 I_{1345678}\ (D_7),\qquad
 I_{1234567}\ (E_7).
\]
For $A_7$, Lemma~\ref{lem:lowrank} gives $K_i\hookrightarrow\mathcal W_3$, and adjoining the possible factor $\mathbb{Z}/2\mathbb{Z}$ gives $\mathcal W_3\times\mathbb{Z}/2\mathbb{Z}$. For $D_7$, the bounds in Lemma~\ref{lem:lowrank}, with at most one additional factor $\mathbb{Z}/2\mathbb{Z}$, embed into
\[
 (\mathbb{Z}/2\mathbb{Z})^{8},\qquad
 \mathcal W_2\times(\mathbb{Z}/2\mathbb{Z})^{5},\qquad
 \mathcal W_2^2\times(\mathbb{Z}/2\mathbb{Z})^{2}.
\]
For $E_7$, the bounds just proved, with at most one additional factor $\mathbb{Z}/2\mathbb{Z}$, embed into the first two of these groups.
\end{proof}

\subsection{Construction of cube stabilizers}

\begin{theorem}\label{thm:typeE}
For $r=6,7,8$, a finite group embeds into $C_{W(E_r)}$ if and only if it embeds into one of the groups in the $E_r$ row of Table~\ref{tab:classification}.
\end{theorem}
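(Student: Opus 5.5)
The forward direction is already established. By Theorem~\ref{thm:criterion}, every finite subgroup of $C_{W(E_r)}$ embeds into some cube stabilizer $K_{\mathcal A}$. Lemmas~\ref{lem:E6} and~\ref{lem:E78} then embed each such stabilizer into one of the groups in the $E_r$ row of Table~\ref{tab:classification}. So the work is the converse: we must realize every listed group as a subgroup of $C_{W(E_r)}$. Subgroups of a realized group are automatically realized, so it suffices to exhibit each listed group.

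The plan is to build each group from commuting pieces supported on pairwise orthogonal subdiagrams, with central factors coming from the top generator when possible.

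\emph{Type $E_6$.} Take $N=I_{345}$, of type $A_3$, which is $\theta$-stable. By the argument in Lemma~\ref{lem:E6}, $\langle C_N,\sigma_{S_6}\rangle\cong C_N\times\mathbb Z/2\mathbb Z$. Also $C_N\cong J_4$ contains $\mathcal W_2$ by Theorem~\ref{thm:typeA}, so this gives $\mathcal W_2\times\mathbb Z/2\mathbb Z$. To reach $\mathcal W_2\times(\mathbb Z/2\mathbb Z)^2$, use instead $N=I_{23456}$, of type $A_5$. Theorem~\ref{thm:typeA} gives $\mathcal W_2\times\mathbb Z/2\mathbb Z\le J_6\cong C_N$, and the central involution $z=\sigma_N\sigma_{S_6}$ from the proof of Lemma~\ref{lem:E6} supplies the extra factor. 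The resulting group is $\mathcal W_2\times(\mathbb Z/2\mathbb Z)^2$.

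\emph{Types $E_7$ and $E_8$.} Here $\sigma_{S_r}$ is central (Lemma~\ref{lem:central}), so every group realized in $C_{M}$ for a proper subset $M$ extends by a factor $\mathbb Z/2\mathbb Z$.
\begin{itemize}
\item For $E_7$, the subdiagram $I_{134567}$ has type $D_6$. By Theorem~\ref{thm:typeD} it contains $(\mathbb Z/2\mathbb Z)^6$ via (B0) and $\mathcal W_2\times(\mathbb Z/2\mathbb Z)^3$ via (B1) with $n=6$. Adjoining $\sigma_{S_7}$ gives $(\mathbb Z/2\mathbb Z)^7$ and $\mathcal W_2\times(\mathbb Z/2\mathbb Z)^4$.
\item For $E_8$, use $I_{1345678}$, of type $D_7$. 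It contains $\mathcal W_2\times(\mathbb Z/2\mathbb Z)^4$ via (B1), taking four blocks of size $1$ so that $3+4=7$. Together with $\sigma_{S_8}$ this gives $\mathcal W_2\times(\mathbb Z/2\mathbb Z)^5$.
\item The group $(\mathbb Z/2\mathbb Z)^8$ requires the seven-vertex subdiagram $E_7=I_{1234567}$: its $(\mathbb Z/2\mathbb Z)^7$ plus $\sigma_{S_8}$.
\item The group $\mathcal W_3\times\mathbb Z/2\mathbb Z$ comes from $I_{2345678}$, of type $A_7$, which contains $\mathcal W_3$ by Theorem~\ref{thm:typeA}, together with $\sigma_{S_8}$.
\end{itemize}

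\emph{The group $\mathcal W_2^2\times(\mathbb Z/2\mathbb Z)^2$ in $E_8$.} This needs care. The sum bound $3|J|+b\le 8$ has no slack, so the two $\mathcal W_2$ factors cannot both come from orthogonal $A_3$ subdiagrams: such a pair uses six vertices, leaving room for only one more factor besides $\sigma_{S_8}$. The natural source is the $D_7$ bound $\mathcal W_2^2\times\mathbb Z/2\mathbb Z$, which arises from (B1) with one block of size $4$, giving $\mathcal W_2\times B_2$. Adjoining $\sigma_{S_8}$ then gives $\mathcal W_2^2\times(\mathbb Z/2\mathbb Z)^2$. So I would realize it in $C_{I_{1345678}}\cong C_{W(D_7)}$ by Theorem~\ref{thm:typeD}, and take the product with $\langle\sigma_{S_8}\rangle$. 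This product is direct because $\sigma_{S_8}\notin C_{I_{1345678}}$, since $\omega_{S_8}\notin W_{I_{1345678}}$.

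The main obstacle is bookkeeping of these identifications: checking that each proper subdiagram chosen has the claimed type, so that Proposition~\ref{prop:parabolic} and Theorems~\ref{thm:typeA}, \ref{thm:typeD} apply. For $E_6$, one must also confirm that $\theta$ acts on $I_{23456}$ as opposition, so that the central-involution argument from Lemma~\ref{lem:E6} goes through.
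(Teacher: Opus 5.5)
Your proposal is correct, but its realization half takes a different route from the paper's. The paper writes down explicit compatible families $\mathcal D_6,\mathcal D_7,\mathcal D_8,\mathcal E_7,\mathcal E_8,\mathcal B,\mathcal T$. It uses Table~\ref{tab:opposition} to check that $\sigma_I\in K_{\mathcal A}$ for every $I\in\mathcal A$, so that $K_{\mathcal A}=\langle\sigma_I:I\in\mathcal A\rangle$ has order $2^{|\mathcal A|}$ by \eqref{eq:fullcubeorder}. It then identifies each stabilizer by a surjection from the target group plus an order count.

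You never compute a cube stabilizer. Instead, you transport the realization directions of Theorems~\ref{thm:typeA} and~\ref{thm:typeD} to the subdiagrams $I_{134567}$ ($D_6$), $I_{1345678}$ ($D_7$), $I_{2345678}$ ($A_7$) and $I_{1234567}$ ($E_7$) by Proposition~\ref{prop:parabolic}. You then adjoin the central involution $\sigma_{S_r}$ for $r=7,8$. Each such product is direct because $\omega_{S_r}\notin W_M$ for every proper $M$; you state this once, but it is needed every time. For $E_6$ you use the splitting $\langle C_N,\sigma_{S_6}\rangle=C_N\times\langle\sigma_N\sigma_{S_6}\rangle$, $N=I_{23456}$, from the proof of Lemma~\ref{lem:E6}. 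All your type identifications and block counts are correct.

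Your route is shorter and avoids new opposition computations. The paper's is self-contained and proves more, namely that each listed group is a full cube stabilizer. The two are closer than they look. Unwinding (B1) in $I_{1345678}$ with one block of size $4$, together with $\sigma_{S_8}$, gives exactly $K_{\mathcal B}$. Unwinding (B0) in $I_{134567}$ with three blocks of size $2$, together with $\sigma_{S_7}$, gives exactly $K_{\mathcal E_7}$.

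Two small remarks. First, your heuristic that the two $\mathcal W_2$ factors cannot both come from orthogonal $A_3$ subdiagrams is inaccurate: in the subgroup you end up using, they are supported on $I_{134}$ and $I_{678}$. What fails is only finding a further commuting generator on the leftover vertices $s_2,s_5$; the element $\sigma_{I_{134}}\sigma_{I_{1345678}}$ supplies the missing central factor. Second, for $E_6$ you could simply apply (B1) to the $D_5$ subdiagram $I_{12345}$ with $3+1+1=5$. The paper's family $\mathcal D_6$ also lies in this subdiagram and does not use $\sigma_{S_6}$.
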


\begin{proof}
By Theorem~\ref{thm:criterion}, every finite subgroup embeds into some $K_{\mathcal A}$. Therefore, Lemmas~\ref{lem:E6} and~\ref{lem:E78} give the necessary conditions. We prove the converse by realizing each listed group as a subgroup of the corresponding generalized cactus group. For these constructions, we use the following observation. If a compatible family $\mathcal A$ satisfies $\sigma_I\in K_{\mathcal A}$ for every $I\in\mathcal A$, then
\begin{equation}\label{eq:fullcubeorder}
 K_{\mathcal A}
 =\langle\sigma_I:I\in\mathcal A\rangle,
 \qquad
 |K_{\mathcal A}|=2^{|\mathcal A|}.
\end{equation}
Indeed, the assumption gives $\langle\sigma_I:I\in\mathcal A\rangle \leq K_{\mathcal A}$. Conversely, since $K_{\mathcal{A}}\subseteq V(Q_{\mathcal{A}})$, and since each vertex in $V(Q_{\mathcal{A}})$ is a product of generators $\sigma_I$ with $I\in\mathcal A$ (Lemma~\ref{lem:vertices}), we have $K_{\mathcal{A}}\leq \langle\sigma_I:I\in\mathcal A\rangle$. Moreover, every vertex of $Q_{\mathcal A}$ belongs to this group, and Lemma~\ref{lem:vertices} gives $|V(Q_{\mathcal A})|=2^{|\mathcal A|}$. Hence, $|K_{\mathcal A}|=2^{|\mathcal A|}$.

To verify the assumption, fix $I\in\mathcal A$. By Theorem~\ref{thm:criterion}, we must check that $\omega_I\mathcal P_{\mathcal A}\omega_I^{-1}=\mathcal P_{\mathcal A}.$ For $J\in\mathcal A$ with $I\subseteq J$, we have $\omega_I\in W_J$, so $\omega_IW_J\omega_I^{-1}=W_J$. If $I\perp J$, then $\omega_I$ commutes with every element of $W_J$, giving the same equality. By compatibility, the remaining members satisfy $J\subsetneq I$. For these members, we have $\omega_IW_J\omega_I^{-1}=W_{\omega_I(J)}.$ Therefore, it suffices to check that
\[
 \{\omega_I(J):J\in\mathcal A,\ J\subsetneq I\}
 =
 \{J\in\mathcal A:J\subsetneq I\}.
\]

We realize $\mathcal W_2\times(\mathbb{Z}/2\mathbb{Z})^{2}$, $\mathcal W_2\times(\mathbb{Z}/2\mathbb{Z})^{4}$, and $\mathcal W_2\times(\mathbb{Z}/2\mathbb{Z})^{5}$ in types $E_6$, $E_7$, and $E_8$, respectively, by successively enlarging a compatible family. Define
\[
 \mathcal D_6=\{I_1,I_2,I_5,I_{145},I_{12345}\},\quad
 \mathcal D_7=\mathcal D_6\cup\{I_7,I_{1234567}\},\quad
 \mathcal D_8=\mathcal D_7\cup\{I_{12345678}\}.
\]
The chain $I_{145}\subset I_{12345}$ contains $I_1,I_5$, while $I_2\perp I_{145}$ and $I_7\perp I_{12345}$. The added full diagrams contain all preceding members, so all three families are compatible. The opposition actions in Table~\ref{tab:opposition} preserve these families. Thus \eqref{eq:fullcubeorder} applies.

To identify these stabilizers, let $x=\sigma_{I_1}$, $y=\sigma_{I_5}$, $a=\sigma_{I_{145}}$, and $q=\sigma_{I_{12345}}$. The construction in Lemma~\ref{lem:dyadic}, applied to $I_{145}$ using Proposition~\ref{prop:parabolic}, gives $\langle x,y,a\rangle\cong\mathcal W_2$. Both $a$ and $q$ exchange $x,y$ by conjugation, and $a,q$ commute. Hence, $z=aq$ is a central involution. For each $r=6,7,8$, the generators other than $x,y,a,q$ in the family $\mathcal D_r$ commute with every generator of $K_{\mathcal D_r}$, so they are central in $K_{\mathcal D_r}$. These relations give surjections onto the three stabilizers from
\[
 \mathcal W_2\times(\mathbb{Z}/2\mathbb{Z})^{2},\qquad
 \mathcal W_2\times(\mathbb{Z}/2\mathbb{Z})^{4},\qquad
 \mathcal W_2\times(\mathbb{Z}/2\mathbb{Z})^{5},
\]
respectively. Their orders are $2^5$, $2^7$, and $2^8$, equal to the stabilizer orders in \eqref{eq:fullcubeorder}. Therefore, the surjections are isomorphisms.

To realize $(\mathbb{Z}/2\mathbb{Z})^{7}$ and $(\mathbb{Z}/2\mathbb{Z})^{8}$ in types $E_7$ and $E_8$, take
\begin{align*}
 \mathcal E_7
 &=\{I_1,I_3,I_5,I_7,I_{1345},I_{134567},
       I_{1234567}\},\\
 \mathcal E_8
 &=\mathcal E_7\cup\{I_{12345678}\}.
\end{align*}
The singleton members are pairwise orthogonal. Moreover, $I_1,I_3,I_5\subset I_{1345}$, $I_7\perp I_{1345}$, and the larger members form a containing chain. Thus, the families are compatible. Their nonsingleton members have types $D_4,D_6,E_7,E_8$, whose longest elements are central. The corresponding generators therefore commute. Equation~\eqref{eq:fullcubeorder} identifies the stabilizers with $(\mathbb{Z}/2\mathbb{Z})^{7}$ and $(\mathbb{Z}/2\mathbb{Z})^{8}$.

To realize $\mathcal W_2^2\times(\mathbb{Z}/2\mathbb{Z})^{2}$ in type $E_8$, let
\[
 \mathcal B=
 \{I_1,I_3,I_6,I_8,I_{134},I_{678},
   I_{1345678},I_{12345678}\}.
\]
The subdiagrams $I_{134}$ and $I_{678}$ have type $A_3$ and are orthogonal. Each contains its two listed singleton members, and both are contained in $I_{1345678}$. Thus, $\mathcal B$ is compatible. Lemma~\ref{lem:dyadic} and Proposition~\ref{prop:parabolic} give two commuting subgroups isomorphic to $\mathcal W_2$.

Let $a=\sigma_{I_{134}}$, $d=\sigma_{I_{1345678}}$, and $e=\sigma_{I_{12345678}}$. Opposition on the $D_7$ subdiagram $I_{1345678}$ exchanges $s_1,s_3$ and fixes $I_{678}$ pointwise. Hence, $a$ and $d$ induce the same conjugation on both subgroups isomorphic to $\mathcal W_2$. They also commute, so $c=ad$ is a central involution. The element $e$ is central as well. All opposition actions preserve $\mathcal B$, as recorded in Table~\ref{tab:opposition}. Since $d=ac$, the preceding elements generate $K_{\mathcal B}$ and give a surjection $\mathcal W_2^2\times(\mathbb{Z}/2\mathbb{Z})^{2}\longrightarrow K_{\mathcal B}$. Both groups have order $2^8$ by \eqref{eq:fullcubeorder}, so this is an isomorphism.

To realize $\mathcal W_3\times\mathbb{Z}/2\mathbb{Z}$ in type $E_8$, let
\[
 \mathcal T=
 \{I_2,I_4,I_6,I_8,I_{234},I_{678},
   I_{2345678},I_{12345678}\}.
\]
The first seven members are the subdiagrams used in the dyadic construction inside the $A_7$ subdiagram $I_{2345678}$. Lemma~\ref{lem:dyadic} and Proposition~\ref{prop:parabolic} show that their generators generate a subgroup isomorphic to $\mathcal W_3$. The remaining generator $\sigma_{I_{12345678}}$ is central. The family is compatible, and the opposition actions in Table~\ref{tab:opposition} preserve it. Consequently, \eqref{eq:fullcubeorder} gives $|K_{\mathcal T}|=2^8$. The resulting surjection $\mathcal W_3\times\mathbb{Z}/2\mathbb{Z}\longrightarrow K_{\mathcal T}$ is therefore an isomorphism, since $|\mathcal W_3\times\mathbb{Z}/2\mathbb{Z}|=2^8=|K_{\mathcal T}|$.
\end{proof}

Table~\ref{tab:opposition} records the opposition actions used to verify \eqref{eq:fullcubeorder} in these constructions. For each member not shown in the second column, opposition fixes every member of the corresponding family contained in it.

\begin{table}[htbp]
\centering\small
\renewcommand{\arraystretch}{1.2}
\begin{tabular}{@{}lll@{}}
\toprule
Family & Members inducing opposition
       & Nontrivial permutation of members\\
\midrule
$\mathcal D_6,\mathcal D_7,\mathcal D_8$
 & $I_{145},\ I_{12345}$ & $(I_1\ I_5)$\\
$\mathcal E_7,\mathcal E_8$
 & none & none\\
$\mathcal B$
 & $I_{134},\ I_{1345678}$ & $(I_1\ I_3)$\\
$\mathcal B$
 & $I_{678}$ & $(I_6\ I_8)$\\
$\mathcal T$
 & $I_{234}$ & $(I_2\ I_4)$\\
$\mathcal T$
 & $I_{678}$ & $(I_6\ I_8)$\\
$\mathcal T$
 & $I_{2345678}$
 & $(I_2\ I_8)(I_4\ I_6)(I_{234}\ I_{678})$\\
\bottomrule
\end{tabular}
\par\vspace{6pt}
\caption{Opposition on the compatible families used in the exceptional realizations.}
\label{tab:opposition}
\end{table}

\begin{remark}\label{rem:Eirredundant}
None of the alternatives for $E_7$ or $E_8$ can be omitted. The two groups for $E_7$ both have order $128$ but have different exponents. The four groups for $E_8$ all have order $256$ and are distinguished by the orders of their commutator subgroups.
\[
\begin{array}{c|c}
 G&|[G,G]|\\ \hline
 (\mathbb{Z}/2\mathbb{Z})^{8}&1\\
 \mathcal W_2\times(\mathbb{Z}/2\mathbb{Z})^{5}&2\\
 \mathcal W_2^2\times(\mathbb{Z}/2\mathbb{Z})^{2}&4\\
 \mathcal W_3\times\mathbb{Z}/2\mathbb{Z}&16
\end{array}
\]
\end{remark}

\section{\texorpdfstring{Classification in types $F$, $H$, and $I_2$}{Classification in types F, H, and I\unichar{"2082}}}\label{sec:small}

We complete the classification for finite irreducible Coxeter systems by treating types $F_4$, $H_3$, $H_4$, and $I_2(p)$. We begin with $I_2(p)$ because the classification for $I_2(5)$ is used in the proofs for $H_3$ and $H_4$.

\subsection{\texorpdfstring{Type $I_2(p)$}{Type I\unichar{"2082}(p)}}

In type $I_2(p)$, we determine $C_{W(I_2(p))}$ directly from its defining presentation.

\begin{figure}[htbp]
\centering
\begin{tikzpicture}[coxeter]
\node (a) {$s_1$};
\node (b) [right=of a] {$s_2$};

\draw (a.east)--node[above] {$p$} (b.west);
\end{tikzpicture}
\caption{Coxeter diagram of type $I_2(p)$.}
\label{fig:typeI}
\end{figure}

\begin{proposition}\label{prop:dihedral}
For $p\geq3$,
\[
 C_{W(I_2(p))}\cong\begin{cases}
 D_\infty,&p\text{ odd},\\
 D_\infty\times\mathbb{Z}/2\mathbb{Z},&p\text{ even},
 \end{cases}
\qquad D_\infty=(\mathbb{Z}/2\mathbb{Z})*(\mathbb{Z}/2\mathbb{Z}).
\]
Consequently, every finite subgroup of $C_{W(I_2(p))}$ embeds into $\mathbb{Z}/2\mathbb{Z}$ for odd $p$ and into $(\mathbb{Z}/2\mathbb{Z})^{2}$ for even $p$. In each case, the indicated group itself occurs as a subgroup.
\end{proposition}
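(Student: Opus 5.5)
Since $I_2(p)$ has only two generators $s_1,s_2$ with $m_{s_1s_2}=p\geq3$, I will work directly from presentation \eqref{eq:presentation}. The set $\mathcal I(S)$ consists of $\{s_1\}$, $\{s_2\}$ and $S$, so $C_{W(I_2(p))}$ has three generators $x=\sigma_{\{s_1\}}$, $y=\sigma_{\{s_2\}}$, $t=\sigma_S$. No two distinct members are orthogonal, because $s_1,s_2$ do not commute and $S$ meets the singletons. Hence the only relations besides $x^2=y^2=t^2=1$ are the nesting relations $txt=\sigma_{\omega_S\{s_1\}\omega_S^{-1}}$ and $tyt=\sigma_{\omega_S\{s_2\}\omega_S^{-1}}$. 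The longest element of the dihedral group of order $2p$ is central exactly when $p$ is even. For odd $p$, conjugation by $\omega_S$ swaps $s_1$ and $s_2$, the opposition on the diagram being the nontrivial flip. So the relations become $txt=y$ (and $tyt=x$, which is equivalent) for $p$ odd, and $[t,x]=[t,y]=1$ for $p$ even.

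For $p$ even the presentation is visibly $\langle x,y\mid x^2,y^2\rangle\times\langle t\mid t^2\rangle\cong D_\infty\times\mathbb Z/2\mathbb Z$. For $p$ odd, I will eliminate $y=txt$ by a Tietze transformation. The relation $y^2=1$ then follows from $x^2=1$, so the group is $\langle x,t\mid x^2,t^2\rangle\cong D_\infty$.

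For the finite subgroups, I will use the standard fact that every finite subgroup of a free product is conjugate into a factor. Thus every nontrivial finite subgroup of $D_\infty$ has order two, and $D_\infty$ contains $\mathbb Z/2\mathbb Z$. For $D_\infty\times\mathbb Z/2\mathbb Z$, a finite subgroup $H$ has finite projection onto $D_\infty$, which embeds into $\mathbb Z/2\mathbb Z$. Hence $H$ lies in a product of two groups of order at most two, so $H\hookrightarrow(\mathbb Z/2\mathbb Z)^2$. For realization, $\langle x,t\rangle\cong(\mathbb Z/2\mathbb Z)^2$ occurs in this case.

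The main point needing care is correctly identifying the opposition on $I_2(p)$ by parity. The rest is routine presentation manipulation.
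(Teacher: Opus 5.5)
Your proposal is correct and follows essentially the same route as the paper: the same generators $\sigma_{\{s_1\}},\sigma_{\{s_2\}},\sigma_S$, the same parity-dependent opposition, elimination of $\sigma_{\{s_2\}}$ for odd $p$, the direct-product description for even $p$, the projection argument for finite subgroups, and the same realizing subgroups. The only difference is cosmetic: you bound finite subgroups of $D_\infty$ using the fact that finite subgroups of a free product are conjugate into a factor, whereas the paper writes elements of $D_\infty$ in the normal form $r^m$ or $xr^m$ and notes that nontrivial $r^m$ and products of two distinct reflections have infinite order.
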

\begin{proof}
Let $x=\sigma_{\{s_1\}}$, $y=\sigma_{\{s_2\}}$, and $z=\sigma_{S}$. These elements have order two. If $p$ is odd, opposition on $S$ exchanges $s_1,s_2$, so $zxz=y$ and $zyz=x$. Eliminating $y$ leaves $\langle x,z\mid x^2=z^2=1\rangle=D_\infty$. If $p$ is even, $z$ commutes with $x,y$ and there is no relation between $x,y$, giving $D_\infty\times\mathbb{Z}/2\mathbb{Z}$.

Writing $D_\infty=\langle r,x\mid x^2=1,xrx=r^{-1}\rangle$, every element can be written as $r^m$ or $xr^m$. Nontrivial $r^m$ have infinite order, and the product of two distinct reflections has infinite order. Thus, its finite subgroups have order at most two. If $p$ is even, let $F\leq D_\infty\times\mathbb{Z}/2\mathbb{Z}$ be finite, and let $\operatorname{pr}$ denote the projection onto $D_\infty$. Then $\operatorname{pr}(F)$ is a finite subgroup of $D_\infty$, so it has order at most two. Since $F\leq\operatorname{pr}(F)\times\mathbb{Z}/2\mathbb{Z}$, we conclude that $F$ embeds into $(\mathbb{Z}/2\mathbb{Z})^{2}$. For odd $p$, the subgroup $\langle x\rangle$ is isomorphic to $\mathbb{Z}/2\mathbb{Z}$. For even $p$, the subgroup $\langle x,z\rangle$ is isomorphic to $(\mathbb{Z}/2\mathbb{Z})^{2}$.
\end{proof}

\subsection{\texorpdfstring{Types $F$ and $H$}{Types F and H}}
Let $(W,S)$ have type $F_4$, $H_3$, or $H_4$. The longest element $\omega_S$ is central in $W$ \cite[Remark~3.1.2]{DJS03}. The nesting relations therefore imply that $\sigma_S$ is central in $C_W$.

To analyze the types $F_4$, $H_3$ and $H_4$, we use Lemma~\ref{lem:maxsupport} and Lemma~\ref{lem:central}. By Lemma~\ref{lem:central}, if $S\in\mathcal A$, then $ K_{\mathcal A}\cong K_{\mathcal A\setminus\{S\}}\times\mathbb{Z}/2\mathbb{Z}$. We then apply Lemma~\ref{lem:maxsupport} to $\mathcal A\setminus\{S\}$ to decompose its stabilizer over its maximal members.

\begin{figure}[htbp]
\centering
\begin{tikzpicture}[coxeter]

\begin{scope}
\node (aF) {$s_1$};
\node (bF) [right=of aF] {$s_2$};
\node (cF) [right=of bF] {$s_3$};
\node (dF) [right=of cF] {$s_4$};
\node [left=10mm of aF] {$F_4$};

\draw (aF.east)--(bF.west);
\draw (bF.east)--node[above] {$4$} (cF.west);
\draw (cF.east)--(dF.west);
\end{scope}

\begin{scope}[yshift=-1.5cm]
\node (aH3) {$s_1$};
\node (bH3) [right=of aH3] {$s_2$};
\node (cH3) [right=of bH3] {$s_3$};
\node [left=10mm of aH3] {$H_3$};

\draw (aH3.east)--node[above] {$5$} (bH3.west);
\draw (bH3.east)--(cH3.west);
\end{scope}

\begin{scope}[yshift=-3cm]
\node (aH4) {$s_1$};
\node (bH4) [right=of aH4] {$s_2$};
\node (cH4) [right=of bH4] {$s_3$};
\node (dH4) [right=of cH4] {$s_4$};
\node [left=10mm of aH4] {$H_4$};

\draw (aH4.east)--node[above] {$5$} (bH4.west);
\draw (bH4.east)--(cH4.west);
\draw (cH4.east)--(dH4.west);
\end{scope}

\end{tikzpicture}
\caption{Coxeter diagrams of types $F_4$, $H_3$, and $H_4$.}
\label{fig:typeFH}
\end{figure}

\begin{theorem}\label{thm:FH}
Let $F$ be a finite group.
\begin{enumerate}
\item $F\hookrightarrow C_{W(F_4)}$ if and only if $F\hookrightarrow(\mathbb{Z}/2\mathbb{Z})^{4}$.
\item $F\hookrightarrow C_{W(H_3)}$ if and only if $F\hookrightarrow(\mathbb{Z}/2\mathbb{Z})^{3}$.
\item $F\hookrightarrow C_{W(H_4)}$ if and only if $F$ embeds into $(\mathbb{Z}/2\mathbb{Z})^{4}$ or $\mathcal W_2\times\mathbb{Z}/2\mathbb{Z}$.
\end{enumerate}
\end{theorem}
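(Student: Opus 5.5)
By Theorem~\ref{thm:criterion}, every finite subgroup lies in some $K_{\mathcal A}$, so the necessity direction reduces to bounding cube stabilizers. Since $\omega_S$ is central in each of $W(F_4)$, $W(H_3)$, $W(H_4)$, Lemma~\ref{lem:central} gives $K_{\mathcal A}\cong K_{\mathcal D}\times(\mathbb Z/2\mathbb Z)^{\delta}$ with $\mathcal D=\mathcal A\setminus\{S\}$. Lemma~\ref{lem:maxsupport} then gives $K_{\mathcal D}\cong\prod_i K_i$ over the maximal members $M_i$ of $\mathcal D$, which are proper, pairwise orthogonal and connected. If $\delta=1$, Lemma~\ref{lem:dimension} gives $\sum_i|M_i|+\delta\leq|S|$; if $\delta=0$, every $M_i$ is proper and the same counting applies.

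Each $K_i$ will be bounded from the earlier classifications, and then the pieces combined.
\begin{itemize}
\item In $F_4$ the proper connected subsets have types $A_1,A_2,A_3,B_2,B_3$. An $A_3$ subset would have to be $\{s_1,s_2,s_3\}$ or $\{s_2,s_3,s_4\}$, but these have types $B_3$ or $C_3$, so $A_3$ does not occur. Hence the possibilities are $A_1$ and $A_2$ (order at most $2$ by Lemma~\ref{lem:lowrank}), $B_2$ (giving $(\mathbb Z/2\mathbb Z)^2$), and $B_3$ (giving $(\mathbb Z/2\mathbb Z)^3$ by Remark~\ref{rem:Bsmall}).
\item In $H_3$ the proper connected subsets are $A_1$, $A_2$ and $I_2(5)$, each bounded by $\mathbb Z/2\mathbb Z$ via Lemma~\ref{lem:lowrank} and Proposition~\ref{prop:dihedral}.
\item In $H_4$ there are in addition $A_3=\{s_2,s_3,s_4\}$, giving $\mathcal W_2$, and $H_3=\{s_1,s_2,s_3\}$, giving $(\mathbb Z/2\mathbb Z)^3$ by part~(2), which I will prove first.
\end{itemize}

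The combination step should go as follows. Each $K_i$ embeds into $(\mathbb Z/2\mathbb Z)^{|M_i|}$, except for an $A_3$ component in $H_4$, which embeds into $\mathcal W_2$. In that exceptional case $M_i=\{s_2,s_3,s_4\}$ leaves only $s_1$, which is adjacent to it, so it is the sole maximal member. Then $K_{\mathcal A}\hookrightarrow\mathcal W_2\times(\mathbb Z/2\mathbb Z)^{\delta}$. In all other cases the elementary abelian factors multiply to rank at most $\sum|M_i|+\delta\leq|S|$. The step needing the most care is checking, across all configurations, that each $K_i$ is elementary abelian of rank at most $|M_i|$. The delicate types are $I_2(5)$ and $A_2$, where the bound is rank $1<|M_i|$, and $B_3$ inside $F_4$, where the rank-$3$ bound relies on Remark~\ref{rem:Bsmall}. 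I also need that the direct product of the $K_i$ with the central $\mathbb Z/2\mathbb Z$ stays elementary abelian, which it does because each factor is.

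For sufficiency I will realize the listed groups with \eqref{eq:fullcubeorder}, choosing compatible families whose opposition actions preserve them.
\begin{itemize}
\item For $F_4$, take $\{\{s_1\},\{s_3\},\{s_2,s_3\},S\}$. Here $\{s_1\}\perp\{s_3\}$, and $\{s_3\}\subset\{s_2,s_3\}$ has type $B_2$ with central longest element. But $\{s_1\}$ and $\{s_2,s_3\}$ are neither nested nor orthogonal, so this fails. I will use $\{\{s_1\},\{s_3\},\{s_2,s_3,s_4\},S\}$ instead. This is still not compatible, since $s_1$ is adjacent to $s_2$. The working choice is $\{\{s_2\},\{s_4\},\{s_2,s_3,s_4\},S\}$, where $\{s_2,s_3,s_4\}$ has type $C_3$ with central longest element and $\{s_2\}\perp\{s_4\}$. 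All generators then commute, giving $(\mathbb Z/2\mathbb Z)^4$.
\item For $H_3$, take $\{\{s_1\},\{s_3\},S\}$, giving $(\mathbb Z/2\mathbb Z)^3$.
\item For $H_4$, take $\{\{s_1\},\{s_3\},\{s_1,s_2,s_3\},S\}$. Here $H_3$ has central longest element, so this gives $(\mathbb Z/2\mathbb Z)^4$.
\item Also for $H_4$, take $\{\{s_2\},\{s_4\},\{s_2,s_3,s_4\},S\}$. The dyadic construction of Lemma~\ref{lem:dyadic} in the $A_3$ component gives $\mathcal W_2$, and the central $\sigma_S$ gives $\mathcal W_2\times\mathbb Z/2\mathbb Z$.
\end{itemize}
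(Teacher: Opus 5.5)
Your proposal is correct and follows essentially the same route as the paper. You reduce to $K_{\mathcal A}$, split off $\sigma_S$ by Lemma~\ref{lem:central}, and decompose over maximal members by Lemma~\ref{lem:maxsupport}. You then isolate the $A_3$ component in $H_4$ as the sole maximal member and realize the groups by compatible families preserved by opposition.

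The only differences are cosmetic, and both versions work:
\begin{itemize}
\item You bound the rank by summing per-component bounds against $\sum_i|M_i|+\delta\leq|S|$, whereas the paper uses $|K_{\mathcal A}|\leq2^{|\mathcal A|}\leq2^{|S|}$.
\item Your final $F_4$ family $\{\{s_2\},\{s_4\},\{s_2,s_3,s_4\},S\}$ replaces the paper's chain $\{s_2\}\subset\{s_2,s_3\}\subset\{s_1,s_2,s_3\}\subset S$. You should delete the two abandoned attempts from the write-up.
\end{itemize}
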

\begin{proof}
We first show that every finite subgroup embeds into one of the groups specified for its type. Suppose that $F\hookrightarrow C_W$, where $W$ has type $F_4$, $H_3$, or $H_4$. By Theorem~\ref{thm:criterion}, there is a compatible family $\mathcal A$ such that $F\hookrightarrow K_{\mathcal A}$. It therefore suffices to prove the corresponding embedding for $K_{\mathcal A}$.

We first decompose $K_{\mathcal A}$ using the proper members of $\mathcal A$. Let $\mathcal D=\mathcal A\setminus\{S\}$, and let $\delta=1$ if $S\in\mathcal A$ and $\delta=0$ otherwise. Let $M_1,\ldots,M_k$ be the maximal members of $\mathcal D$, and define
\[
 \mathcal D_i=\{I\in\mathcal D:I\subseteq M_i\},
 \qquad
 K_i=\operatorname{Stab}_{C_{M_i}}(Q_{\mathcal D_i}).
\]
Since $\omega_S$ is central in all three types, Lemmas~\ref{lem:maxsupport} and~\ref{lem:central} give
\begin{equation}\label{eq:central-support-bound}
 K_{\mathcal A}\cong\prod_{i=1}^k K_i\times(\mathbb{Z}/2\mathbb{Z})^{\delta},
 \qquad
 K_i\leq C_{M_i}.
\end{equation}

To bound the rank when $K_{\mathcal A}$ is elementary abelian, we first bound its order. Every $h\in K_{\mathcal A}$ satisfies $h=h\cdot1\in V(Q_{\mathcal A})$. Thus, Lemmas~\ref{lem:vertices} and~\ref{lem:dimension} give
\begin{equation}\label{eq:stabilizer-order-bound}
 |K_{\mathcal A}|
 \leq |V(Q_{\mathcal A})|
 =2^{|\mathcal A|}
 \leq2^{|S|}.
\end{equation}
Consequently, if $K_{\mathcal A}$ is elementary abelian, its rank is at most $|S|$.

For type $F_4$, we show that $K_{\mathcal A}\hookrightarrow(\mathbb{Z}/2\mathbb{Z})^{4}$. Every proper connected subdiagram has type $A_1$, $A_2$, $B_2$, or $B_3$. By Theorem~\ref{thm:typeA} and Remark~\ref{rem:Bsmall}, each $K_i$ is elementary abelian. Therefore, the decomposition \eqref{eq:central-support-bound} shows that $K_{\mathcal A}$ is elementary abelian, and \eqref{eq:stabilizer-order-bound} bounds its rank by four.

For type $H_3$, we show that $K_{\mathcal A}\hookrightarrow(\mathbb{Z}/2\mathbb{Z})^{3}$. Every proper connected subdiagram has type $A_1$, $A_2$, or $I_2(5)$. By Theorem~\ref{thm:typeA} and Proposition~\ref{prop:dihedral}, each $K_i$ has order at most two. Thus, $K_{\mathcal A}$ is elementary abelian by \eqref{eq:central-support-bound}, and its rank is at most three by \eqref{eq:stabilizer-order-bound}.

For type $H_4$, we show that $K_{\mathcal A}$ embeds into $(\mathbb{Z}/2\mathbb{Z})^{4}$ or $\mathcal W_2\times\mathbb{Z}/2\mathbb{Z}$. Suppose first that some $M_i$ has three vertices. It is the only maximal member of $\mathcal D$: the remaining vertex of $S$ is adjacent to $M_i$, so no nonempty subset of it is orthogonal to $M_i$. Hence, \eqref{eq:central-support-bound} reduces to $K_{\mathcal A}\cong K_i\times(\mathbb{Z}/2\mathbb{Z})^{\delta}$. The subdiagram on $M_i$ has type $H_3$ or $A_3$. If it has type $H_3$, the preceding paragraph gives $K_i\hookrightarrow(\mathbb{Z}/2\mathbb{Z})^{3}$, so $K_{\mathcal A}\hookrightarrow(\mathbb{Z}/2\mathbb{Z})^{4}$. If it has type $A_3$, Theorem~\ref{thm:typeA} gives $K_i\hookrightarrow\mathcal W_2$, so $K_{\mathcal A}\hookrightarrow\mathcal W_2\times\mathbb{Z}/2\mathbb{Z}$.

If every $M_i$ has at most two vertices, its type is $A_1$, $A_2$, or $I_2(5)$. Each $K_i$ therefore has order at most two. It follows from \eqref{eq:central-support-bound} that $K_{\mathcal A}$ is elementary abelian. Its rank is at most four by \eqref{eq:stabilizer-order-bound}, giving $K_{\mathcal A}\hookrightarrow(\mathbb{Z}/2\mathbb{Z})^{4}$.

We next construct the groups in the statement as subgroups of the corresponding generalized cactus groups.

To realize $(\mathbb{Z}/2\mathbb{Z})^{4}$ in $C_{W(F_4)}$, we use $\sigma_{\{s_2\}}$, $\sigma_{\{s_2,s_3\}}$, $\sigma_{\{s_1,s_2,s_3\}}$, and $\sigma_S$. The corresponding subdiagrams are nested and have types $A_1$, $B_2$, $B_3$, and $F_4$, respectively. Their longest elements are central in the corresponding Coxeter groups, so these generators commute by the nesting relations. By Lemma~\ref{lem:vertices}, they generate a subgroup isomorphic to $(\mathbb{Z}/2\mathbb{Z})^{4}$.

To realize $(\mathbb{Z}/2\mathbb{Z})^{3}$ in $C_{W(H_3)}$, we use $\sigma_{\{s_1\}}$, $\sigma_{\{s_3\}}$, and $\sigma_S$. These generators commute because $\{s_1\}\perp\{s_3\}$ and $\omega_S$ is central. By Lemma~\ref{lem:vertices}, they generate a subgroup isomorphic to $(\mathbb{Z}/2\mathbb{Z})^{3}$.

To realize $(\mathbb{Z}/2\mathbb{Z})^{4}$ in $C_{W(H_4)}$, we use $\sigma_{\{s_1\}}$, $\sigma_{\{s_3\}}$, $\sigma_{\{s_1,s_2,s_3\}}$, and $\sigma_S$. The first two generators commute because $\{s_1\}\perp\{s_3\}$. The longest elements of $W_{\{s_1,s_2,s_3\}}$ and $W(H_4)$ are central, so the nesting relations show that all four generators commute. By Lemma~\ref{lem:vertices}, they generate a subgroup isomorphic to $(\mathbb{Z}/2\mathbb{Z})^{4}$.

Finally, we realize $\mathcal W_2\times\mathbb{Z}/2\mathbb{Z}$ in $C_{W(H_4)}$. Let $R=\{s_2,s_3,s_4\}$, whose subdiagram has type $A_3$. The construction in Lemma~\ref{lem:dyadic}, together with Proposition~\ref{prop:parabolic}, gives $L=\langle \sigma_{\{s_2\}},\sigma_{\{s_4\}},\sigma_R \rangle\cong\mathcal W_2$. The nesting relations and the centrality of $\omega_S$ show that $\sigma_S$ commutes with every element of $L$. Moreover, $g_{W(H_4)}(L)\leq W_R$, whereas $g_{W(H_4)}(\sigma_S)=\omega_S\notin W_R$. This implies that $\sigma_S\notin L$, and therefore,
\[
 \langle L,\sigma_S\rangle
 \cong L\times\langle\sigma_S\rangle
 \cong\mathcal W_2\times\mathbb{Z}/2\mathbb{Z}.
\]
\end{proof}

\section{\texorpdfstring
 {Proof of Theorem~\ref{thm:mainB}}
 {Proof of Theorem B}}
\label{sec:consequences}

In this section, we prove Theorem~\ref{thm:mainB}. As a consequence, we determine the possible orders of finite subgroups of $C_W$ when $S$ is finite.

\begin{proof}[Proof of Theorem~\ref{thm:mainB}]
We first prove Theorem~\ref{thm:mainB} for finite irreducible Coxeter systems. These follow from Theorems~\ref{thm:typeA}, \ref{thm:typeB}, \ref{thm:typeD}, \ref{thm:typeE}, \ref{thm:FH}, and Proposition~\ref{prop:dihedral}. For type $D_n$, we add factors $B_0=\mathbb{Z}/2\mathbb{Z}$ to the products $\prod_j B_{a_j}$ until the inequalities in Theorem~\ref{thm:typeD} become equalities. Each added factor increases $\sum_j2^{a_j}$ by one. Every group in condition~(3) of Theorem~\ref{thm:typeD} embeds into a group in the $D_n$ row of Table~\ref{tab:classification} by adding factors $B_0$. Conversely, every group in that row satisfies the inequalities in condition~(3). Thus, for a finite group $F$, condition~(3) of Theorem~\ref{thm:typeD} holds if and only if $F$ embeds into one of the groups in the $D_n$ row of Table~\ref{tab:classification}.

We next prove Theorem~\ref{thm:mainB} for finite reducible Coxeter systems. Let $(W,S)=\prod_{i=1}^r(W_i,S_i)$ be the irreducible decomposition, and let $X_i$ be the type of $(W_i,S_i)$. Suppose that $F$ is a finite group with $F\hookrightarrow C_W$. By Proposition~\ref{prop:product}, there are finite subgroups $F_i\leq C_{W_i}$ such that $F\hookrightarrow\prod_{i=1}^r F_i$. The irreducible classification gives an embedding $F_i\hookrightarrow H_i$ for some $H_i\in\mathscr L(X_i)$. Consequently,
\[
 F\hookrightarrow\prod_{i=1}^r F_i
 \hookrightarrow\prod_{i=1}^r H_i.
\]

Conversely, choose $H_i\in\mathscr L(X_i)$ for each $i$. The irreducible realization results give embeddings $H_i\hookrightarrow C_{W_i}$. Taking their product gives an embedding
\[
 \prod_{i=1}^r H_i
 \hookrightarrow\prod_{i=1}^r C_{W_i}
 \cong C_W.
\]
Thus, every such product is realized as a subgroup of $C_W$, and composing with an embedding $F\hookrightarrow\prod_{i=1}^r H_i$ gives $F\hookrightarrow C_W$. This proves the result for finite Coxeter systems.

We now prove Theorem~\ref{thm:mainB} for an arbitrary Coxeter system $(W,S)$. Let $F$ be a finite group. Suppose that condition~(1) holds, and identify $F$ with its image in $C_W$. By Theorem~\ref{thm:localization}, there exist a spherical subset $T\subseteq S$ and an element $a\in C_W$ such that $a^{-1}Fa\leq C_{W_T}.$ In particular, condition~(1) implies condition~(2). For every spherical subset $T$, the classification for finite Coxeter systems proved above applies to $(W_T,T)$. Thus, condition~(2) implies condition~(3). To prove that condition~(3) implies condition~(1), suppose that $F\hookrightarrow H$ for some spherical subset $T\subseteq S$ and some $H\in\mathscr L(T)$. Since $W_T$ is finite, the argument for finite Coxeter systems given above, applied to $(W_T,T)$, gives an embedding $H\hookrightarrow C_{W_T}$. Proposition~\ref{prop:parabolic} then gives
\[
 H\hookrightarrow C_{W_T}\hookrightarrow C_W.
\]
Composing with the given embedding $F\hookrightarrow H$ yields $F\hookrightarrow C_W$.
\end{proof}

When $S$ is finite, it is enough to use the maximal spherical subsets in Theorem~\ref{thm:mainB}. Indeed, every spherical subset $U\subseteq S$ is contained in a maximal spherical subset $T$, and Proposition~\ref{prop:parabolic} gives $C_{W_U}\hookrightarrow C_{W_T}$. When $S$ is infinite, maximal spherical subsets need not exist, and the orders of finite subgroups of $C_W$ can be unbounded.

We now determine the largest order of a finite subgroup when $S$ is finite. Let $s_2(m)$ denote the number of ones in the binary expansion of $m$. For a finite irreducible Coxeter type $X$, let $q(X)$ be the integer such that $2^{q(X)}$ is the largest order of a finite subgroup of $C_{W(X)}$. The classifications proved above give
\begin{equation}\label{eq:qdefinition}
 q(X)=
 \begin{cases}
  n+1-s_2(n+1),&X=A_n,\\
  5,&X=E_6,\\
  1,&X=I_2(p),\ p\text{ odd},\\
  \operatorname{rank}(X),&\text{otherwise}.
 \end{cases}
\end{equation}
For a finite Coxeter system $(V,R)$ with irreducible component types $X_1,\ldots,X_r$, put
\[
 q(V)=\sum_{i=1}^r q(X_i),
 \qquad q(1)=0.
\]

\begin{corollary}\label{cor:maximum}
Let $(W,S)$ be a Coxeter system with $|S|<\infty$. Then
\[
 \max\{|F|:F\leq C_W\text{ finite}\}
 =
 \max_{\substack{T\subseteq S\\
                  T\text{ maximal spherical}}}
 2^{q(W_T)}.
\]
The orders of finite subgroups of $C_W$ are exactly $2^j$ for
\[
 0\leq j\leq
 \max_{\substack{T\subseteq S\\
                  T\text{ maximal spherical}}}
 q(W_T).
\]
In particular, if $W$ is finite, the orders of finite subgroups of $C_W$ are exactly $2^j$ for $0\leq j\leq q(W)$.
\end{corollary}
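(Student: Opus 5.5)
The plan is to reduce everything to Theorem~\ref{thm:mainB} and then compute orders type by type. By the remark after the proof of Theorem~\ref{thm:mainB}, when $S$ is finite every finite subgroup of $C_W$ is conjugate into $C_{W_T}$ for some maximal spherical $T$. Conversely, Proposition~\ref{prop:parabolic} embeds each $C_{W_T}$ into $C_W$. So it suffices to prove the statement for a finite Coxeter system $(V,R)$, namely that the orders of finite subgroups of $C_V$ are exactly $2^j$ for $0\leq j\leq q(V)$. Maximizing over the finitely many maximal spherical $T$ then gives both displayed claims for $C_W$.

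For a finite system with irreducible components $X_1,\ldots,X_r$, Proposition~\ref{prop:product} together with Theorem~\ref{thm:mainB} shows two things. Every finite subgroup embeds into a product $\prod H_i$ with $H_i\in\mathscr L(X_i)$, and every such product is realized in $C_V$. The maximal order is therefore $\prod_i\max_{H\in\mathscr L(X_i)}|H|$, and it remains to check that this maximum equals $2^{q(X_i)}$ for each irreducible type. The checks run as follows.
\begin{itemize}
\item For $A_n$, $|P_{n+1}|=2^{\nu_2((n+1)!)}=2^{n+1-s_2(n+1)}$ by Legendre's formula, as in Lemma~\ref{lem:sylow}.
\item For $B_n$, every member of $\mathscr B_n$ has order $\prod 2^{2^{a_j}}=2^n$, using $|B_a|=2^{2^a}$.
\item For $D_n$, the orders are $2^{2\lfloor n/2\rfloor}$ and $8\cdot 2^{n-3}=2^n$, so the maximum is $2^n$.
\item For $E_6$, the order is $2^5$. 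For $E_7$ and $E_8$, all listed groups have order $2^7$ and $2^8$ respectively, as in Remark~\ref{rem:Eirredundant}.
\item For $F_4$, $H_3$ and $H_4$, the orders are $2^4$, $2^3$ and $2^4$.
\item For $I_2(p)$, the order is $2$ for odd $p$ and $4$ for even $p$.
\end{itemize}
This gives \eqref{eq:qdefinition}, and summing over components gives the maximum $2^{q(V)}$.

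For the set of all orders, Lemma~\ref{lem:pure} shows that every finite subgroup is a $2$-group. Its order is thus $2^j$, and $j\leq q(V)$ by the maximum just computed. For the converse, I would take a realized subgroup $G$ of order $2^{q(V)}$. A finite $2$-group has subgroups of every order dividing its order (Sylow's theorem, or a central series argument). Hence every $2^j$ with $0\leq j\leq q(V)$ occurs as the order of a subgroup of $G\leq C_V$. The same argument, applied in $C_W$ to a subgroup of maximal order, gives the full range in the general statement. The final sentence of the corollary is the case $T=S$.

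The main obstacle is only bookkeeping: the reduction to maximal spherical subsets needs finiteness of $S$ so that the maximum over $T$ is attained. I would also verify that the $D_n$ row indeed attains $2^n$ for odd $n$, where $2\lfloor n/2\rfloor=n-1$ and only the second family $\mathcal W_2\times\mathscr B_{n-3}$ reaches order $2^n$.
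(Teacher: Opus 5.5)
Your proposal is correct and follows essentially the same route as the paper. You reduce via Theorem~\ref{thm:localization} and Proposition~\ref{prop:parabolic} to a maximal spherical $T$, bound the order through the product decomposition, realize the maximum by a product of largest subgroups, and use that a $2$-group of order $2^q$ has subgroups of every order $2^j$ with $j\leq q$. The only difference is that you read the maximal orders off the lists $\mathscr L(X)$ and check \eqref{eq:qdefinition} row by row, whereas the paper defines $q(X)$ by the maximal order, bounds products by coordinate projections, and leaves that check implicit.
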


\begin{proof}
For every spherical subset $T\subseteq S$, the largest order of a finite subgroup of $C_{W_T}$ is $2^{q(W_T)}$. Indeed, write $(W_T,T)=\prod_{i=1}^r(W_{T_i},T_i)$ for its irreducible decomposition, with component types $X_1,\ldots,X_r$. Under the isomorphism $C_{W_T}\cong\prod_{i=1}^r C_{W_{T_i}}$, a finite subgroup is contained in the product of its coordinate projections. By the definition of $q(X_i)$, its order is therefore at most $\prod_{i=1}^r 2^{q(X_i)}=2^{q(W_T)}.$ Taking the product of subgroups of largest order in the factors attains this bound.

We next bound the order of an arbitrary finite subgroup $F\leq C_W$. By Theorem~\ref{thm:localization}, a conjugate of $F$ is contained in $C_{W_U}$ for some spherical subset $U\subseteq S$. Since $S$ is finite, choose a maximal spherical subset $T$ containing $U$. Proposition~\ref{prop:parabolic} identifies $C_{W_U}$ with a subgroup of $C_{W_T}$, so
\[
 |F|\leq 2^{q(W_T)}
 \leq
 \max_{\substack{T'\subseteq S\\
                  T'\text{ maximal spherical}}}
 2^{q(W_{T'})}.
\]

To attain this bound, choose a maximal spherical subset $T_0$ for which $q(W_{T_0})$ is largest; such a subset exists because $S$ is finite. Choose a finite subgroup $H\leq C_{W_{T_0}}$ of order $2^{q(W_{T_0})}$. By Proposition~\ref{prop:parabolic}, $H$ embeds into $C_W$. Its image has order equal to the upper bound above, proving the formula for the largest order.

By Lemma~\ref{lem:pure}, every finite subgroup of $C_W$ is a $2$-group. The subgroup of largest order has subgroups of every $2$-power order dividing its order, giving all the orders asserted in the statement.

If $W$ is finite, then $S$ is the unique maximal spherical subset, giving the final assertion.
\end{proof}

By \eqref{eq:qdefinition}, $q(W_T)\leq|T|$ for every spherical subset $T$. Consequently, every finite subgroup of $C_W$ has order at most $2^{|S|}$. If $W$ is infinite, every spherical subset is properly contained in $S$, so this bound improves to $2^{|S|-1}$.

\begin{remark}
In general, the list of finite groups in Theorem~\ref{thm:mainB} cannot be replaced by a single finite group. For example, suppose that a finite group $G$ satisfies
\[
 F\hookrightarrow C_{W(E_7)}
 \quad\Longleftrightarrow\quad
 F\hookrightarrow G
\]
for every finite group $F$. Taking $F=G$ shows that $G\hookrightarrow C_{W(E_7)}$, so $|G|\leq128$ by Theorem~\ref{thm:typeE}. Both $(\mathbb{Z}/2\mathbb{Z})^{7}$ and $\mathcal W_2\times(\mathbb{Z}/2\mathbb{Z})^{4}$ must embed into $G$. Since both have order $128$, $G$ would have to be isomorphic to each of them. This is impossible, since the first group is abelian and the second is not.
\end{remark}

\section{Iterated wreath products and embeddings}
\label{sec:wreath}

For a group $A$ and an integer $r\geq1$, write
\[
 A\mathbin{\wr_{r}}\mathbb{Z}/2\mathbb{Z}=A^r\rtimes\mathbb{Z}/2\mathbb{Z}
\]
for the permutational wreath product whose action is given by
\[
 (a_1,\ldots,a_r)\longmapsto(a_r,\ldots,a_1)
\]
for the nontrivial element of $\mathbb{Z}/2\mathbb{Z}$. In particular, $A\mathbin{\wr_{1}}\mathbb{Z}/2\mathbb{Z}=A\times\mathbb{Z}/2\mathbb{Z}$. A \emph{tower} is a group $T=T_s$ equipped with a construction
\[
 T_0=\mathbb{Z}/2\mathbb{Z},\qquad
 T_i=T_{i-1}\mathbin{\wr_{r_i}}\mathbb{Z}/2\mathbb{Z}
 \quad(1\leq i\leq s),
\]
where $s\geq0$ and $r_i\geq1$.

We define the weight of this construction recursively by
\begin{equation}\label{eq:weight}
 u_0=1,\qquad
 u_i=
 \begin{cases}
  r_i u_{i-1},&r_i\geq2,\\
  u_{i-1}+1,&r_i=1,
 \end{cases}
 \qquad w(T)=u_s.
\end{equation}
At each stage, we construct a permutation action of $T_i$ on the set $\{1,\ldots,2u_i\}$. For $T_0=\mathbb{Z}/2\mathbb{Z}$, we use the transposition $(1\ 2)$ on $\{1,2\}$, giving $u_0=1$.

If $r_i\geq2$, partition $\{1,\ldots,2r_i u_{i-1}\}$ into $r_i$ consecutive intervals, each containing $2u_{i-1}$ elements. Let one copy of $T_{i-1}$ act on each interval. The additional involution reverses the order of the intervals while preserving the order within each interval. The set on which $T_i$ acts therefore has $2r_i u_{i-1}$ elements, giving $u_i=r_i u_{i-1}$.

If $r_i=1$, retain the action of $T_{i-1}$ on $\{1,\ldots,2u_{i-1}\}$ and extend it to fix $2u_{i-1}+1$ and $2u_{i-1}+2$. Let the additional factor $\mathbb{Z}/2\mathbb{Z}$ interchange these two elements and fix all the others. The resulting action uses $2u_{i-1}+2$ elements, giving $u_i=u_{i-1}+1$. Therefore, $w(T)$ is half the cardinality of the set used in this permutation construction.

\begin{lemma}\label{lem:towerdegree}
A tower of weight $u$ embeds into $S_{2u}$.
\end{lemma}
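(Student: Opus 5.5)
We prove by induction on the number $s$ of stages that the permutation action of $T_s$ on $\{1,\ldots,2u_s\}$, constructed just before the statement, is faithful. Faithfulness gives an embedding $T_s\hookrightarrow S_{2u_s}=S_{2w(T)}$, which is the assertion. The two things to check at each stage are that the construction defines an action of the abstract wreath product, and that this action is injective.

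\emph{Base case and the case $r_i=1$.} For $s=0$, the transposition $(1\ 2)$ generates a faithful copy of $\mathbb Z/2\mathbb Z$ in $S_2$. Suppose $r_i=1$, so $T_i=T_{i-1}\times\mathbb Z/2\mathbb Z$. The factor $T_{i-1}$ acts faithfully on $\{1,\ldots,2u_{i-1}\}$ by induction and fixes the two new points. The new involution swaps the two new points and fixes everything else. The two images therefore have disjoint supports, so they commute and intersect trivially. Hence the product map is an injective homomorphism into $S_{2u_i}$.

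\emph{The case $r_i=r\geq2$.} Write the blocks as $\Omega_k=\{(k-1)2u_{i-1}+1,\ldots,k\cdot2u_{i-1}\}$ for $1\le k\le r$. Let $\beta_k:\{1,\ldots,2u_{i-1}\}\to\Omega_k$ be the order-preserving bijection. The $k$-th copy of $T_{i-1}$ acts on $\Omega_k$ by transporting the inductive action through $\beta_k$. The involution $\varepsilon$ sends $\beta_k(x)$ to $\beta_{r+1-k}(x)$.
\begin{enumerate}
\item The copies have disjoint supports, so they commute and give a faithful action of $T_{i-1}^r$ on $\bigcup_k\Omega_k$. Faithfulness uses the inductive hypothesis blockwise.
\item A direct check gives $\varepsilon\,(\beta_k g\beta_k^{-1})\,\varepsilon=\beta_{r+1-k}g\beta_{r+1-k}^{-1}$. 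So $\varepsilon$ realizes exactly the reversal automorphism $(a_1,\ldots,a_r)\mapsto(a_r,\ldots,a_1)$, and we obtain a homomorphism from $T_{i-1}\mathbin{\wr_r}\mathbb Z/2\mathbb Z$.
\item For injectivity, every element of the image of $T_{i-1}^r$ preserves each block. An element $a\varepsilon$ sends $\Omega_1$ onto $\Omega_r\neq\Omega_1$ because $r\geq2$, so it is nontrivial. Together with (1), the kernel is trivial.
\end{enumerate}

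The only step needing care is (2), namely matching the permutation $\varepsilon$ with the reversal automorphism in the definition of $\wr_r$. It is a routine index computation once the $\beta_k$ are written down explicitly.
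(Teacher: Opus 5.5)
Your proposal is correct and follows essentially the same route as the paper: induction over the stages, disjoint supports when $r_i=1$, and for $r_i\geq2$ a blockwise action together with a block-reversing involution, with injectivity coming from the fact that this involution moves the first block onto the last. Your explicit check that $\varepsilon$ conjugates the $k$th copy onto the $(r+1-k)$th, so that the map is a homomorphism from $T_{i-1}\mathbin{\wr_{r}}\mathbb{Z}/2\mathbb{Z}$, is a detail the paper leaves implicit.
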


\begin{proof}
We prove by induction on $i$ that $T_i\hookrightarrow S_{2u_i}$. For $i=0$, this is the standard embedding $T_0=\mathbb{Z}/2\mathbb{Z}\hookrightarrow S_2$.

Suppose that $T_{i-1}\hookrightarrow S_{2u_{i-1}}$. If $r_i\geq2$, we use the following elementary observation: if $A\hookrightarrow S_m$ and $r\geq2$, then $A\mathbin{\wr_{r}}\mathbb{Z}/2\mathbb{Z}\hookrightarrow S_{rm}$. Indeed, let $A^r$ act on $r$ disjoint copies of $\{1,\ldots,m\}$, with the $j$th factor acting on the $j$th copy. Let the additional involution reverse the order of these copies, sending each element to the corresponding element in the other copy. Each factor acts injectively on its own set, and the additional involution exchanges the first and last sets. Thus, the resulting homomorphism $A\mathbin{\wr_{r}}\mathbb{Z}/2\mathbb{Z}\to S_{rm}$ is injective.

Applying this observation gives
\[
 T_i
 =T_{i-1}\mathbin{\wr_{r_i}}\mathbb{Z}/2\mathbb{Z}
 \hookrightarrow S_{2r_i u_{i-1}}
 =S_{2u_i}.
\]

If $r_i=1$, then $T_i=T_{i-1}\times\mathbb{Z}/2\mathbb{Z}$. By the induction hypothesis and the standard embedding $\mathbb{Z}/2\mathbb{Z}\hookrightarrow S_2$, letting the two factors act on disjoint sets gives
\[
 T_i
 \hookrightarrow S_{2u_{i-1}}\times S_2
 \hookrightarrow S_{2u_{i-1}+2}
 =S_{2u_i}.
\]
This completes the induction. Taking $i=s$ gives $T\hookrightarrow S_{2w(T)}$.
\end{proof}

In \cite[Conjecture~8.5]{Genevois25}, Genevois proposes an embedding criterion using a single tower with $\prod_i r_i\leq n/2$. The following theorem establishes a modified criterion using a finite product of towers whose total weight is at most $n/2$.

\begin{theorem}
\label{thm:weightedA}
Let $n\geq2$ and let $F$ be a finite group. Then $F\hookrightarrow J_n$ if and only if there exist towers $T_1,\ldots,T_k$ such that
\[
 F\hookrightarrow\prod_{j=1}^kT_j,\qquad
 \sum_{j=1}^k w(T_j)\leq\frac n2.
\]
\end{theorem}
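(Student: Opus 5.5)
The proof will go through Theorem~\ref{thm:typeA}, which reduces the statement to comparing products of towers with $\prod_j\mathcal W_{a_j}$, that is, with a Sylow $2$-subgroup $P_n$ of $S_n$.

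\emph{Sufficiency.} Suppose $F\hookrightarrow\prod_jT_j$ with $\sum_jw(T_j)\le n/2$. By Lemma~\ref{lem:towerdegree}, each $T_j$ embeds into $S_{2w(T_j)}$. Letting the factors act on pairwise disjoint sets gives an embedding
\[
 \prod_jT_j\hookrightarrow S_{2\sum_jw(T_j)}\hookrightarrow S_n.
\]
The product is a $2$-group, so it lies in a Sylow $2$-subgroup $P_n$. Theorem~\ref{thm:typeA}, implication $(2)\Rightarrow(1)$, then gives $F\hookrightarrow J_n$.

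\emph{Necessity.} Suppose $F\hookrightarrow J_n$. Theorem~\ref{thm:typeA} gives $F\hookrightarrow\prod_{j}\mathcal W_{a_j}$, where $n=\sum_j2^{a_j}$ is the binary expansion. Factors with $a_j=0$ are trivial and can be dropped. For $a\ge1$, I claim $\mathcal W_a$ is itself a tower of weight $2^{a-1}$. Take $r_i=2$ for every stage, so that
\[
 T_0=\mathbb Z/2\mathbb Z=\mathcal W_1,\qquad
 T_i=T_{i-1}\mathbin{\wr_2}\mathbb Z/2\mathbb Z=(T_{i-1}\times T_{i-1})\rtimes\mathbb Z/2\mathbb Z .
\]
Here the involution reverses the pair, i.e.\ swaps the two factors. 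This is exactly the recursion defining $\mathcal W_{i+1}$, so by induction $T_{a-1}\cong\mathcal W_a$. The weights satisfy $u_0=1$ and $u_i=2u_{i-1}$, hence $w(T_{a-1})=2^{a-1}$. The total weight is then
\[
 \sum_{a_j\ge1}2^{a_j-1}=\frac{n-\epsilon}{2}\le\frac n2,
\]
where $\epsilon\in\{0,1\}$ is the parity of $n$. If every $a_j=0$, then $n=1$, which is excluded; in any case an empty product of towers would also be allowed.

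\emph{Main obstacle.} There is essentially none, since everything reduces to Theorem~\ref{thm:typeA}. The only point needing care is the identification $\mathcal W_a\cong T_{a-1}$ with all $r_i=2$. One must check that the reversal action on $A^2$ is the swap of the two factors, which matches the semidirect product defining $\mathcal W_{a}$. A minor bookkeeping check is that the weight convention $u_0=1$ corresponds to $|\{1,2\}|/2$, so the degree count $2w(T)$ in Lemma~\ref{lem:towerdegree} is consistent with the total weight bound $n/2$.
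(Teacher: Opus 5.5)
Your proposal is correct and follows essentially the same route as the paper. For necessity, you use Theorem~\ref{thm:typeA} and realize each $\mathcal W_a$ ($a\geq1$) as the tower with all $r_i=2$ and weight $2^{a-1}$, giving total weight $\lfloor n/2\rfloor$. For sufficiency, you use Lemma~\ref{lem:towerdegree} with disjoint supports in $S_n$, then Sylow's theorem to land in $P_n$, and then Theorem~\ref{thm:typeA}.
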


\begin{proof}
Suppose that $F\hookrightarrow J_n$, and write $n=\sum_j2^{a_j}$ for its binary expansion. By Theorem~\ref{thm:typeA},
\[
 F\hookrightarrow\prod_j\mathcal W_{a_j}.
\]
We omit the factor $\mathcal W_0=\{1\}$ if it occurs. For $a\geq1$, the group $\mathcal W_a$ is obtained from $\mathbb{Z}/2\mathbb{Z}$ by applying the operation $A\mapsto A\mathbin{\wr_{2}}\mathbb{Z}/2\mathbb{Z}$ exactly $a-1$ times. It follows that the tower $\mathcal W_a$ has weight $2^{a-1}$. The total weight is
\[
 \sum_{a_j\geq1}2^{a_j-1}
 =\lfloor n/2\rfloor\leq n/2.
\]

Now suppose that $F\hookrightarrow\prod_{j=1}^kT_j$ and $\sum_jw(T_j)\leq n/2$. Choose disjoint consecutive intervals $\Omega_1,\ldots,\Omega_k\subseteq\{1,\ldots,n\}$ with $|\Omega_j|=2w(T_j)$. By Lemma~\ref{lem:towerdegree}, each $T_j$ embeds into $S_{\Omega_j}$. Taking the product of these embeddings gives $\prod_jT_j\hookrightarrow S_n$. This product is a $2$-group, so Sylow's theorem gives an embedding into $P_n$. Therefore, Theorem~\ref{thm:typeA} gives $F\hookrightarrow J_n$.
\end{proof}

We also obtain a weighted formulation for type $B$.

\begin{theorem}\label{thm:weightedB}
Let $n\geq1$ and let $F$ be a finite group. Then $F\hookrightarrow C_{W(B_n)}$ if and only if there exist groups $R_1,\ldots,R_k$, each either a tower or the trivial group, such that
\[
 F\hookrightarrow\prod_{j=1}^k(\mathbb{Z}/2\mathbb{Z}\times R_j),\qquad
 \sum_{j=1}^k w(R_j)\leq\frac n2,
\]
where towers have the weight in \eqref{eq:weight} and $w(1)=1/2$.
\end{theorem}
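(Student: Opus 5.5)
The plan is to run the proof of Theorem~\ref{thm:weightedA} again, but with Theorem~\ref{thm:typeB} as the basic criterion in place of Theorem~\ref{thm:typeA}. Both directions go through condition~(3) of Theorem~\ref{thm:typeB}: we need $F\hookrightarrow\prod_j(\mathbb{Z}/2\mathbb{Z}\times\mathcal W_{a_j})$ for some $a_j\geq0$ with $\sum_j2^{a_j}=n$, or with $\sum_j 2^{a_j}\le n$, since adjoining factors $\mathbb{Z}/2\mathbb{Z}\times\mathcal W_0$ raises the sum by one each time.

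\emph{Forward direction.} Suppose $F\hookrightarrow C_{W(B_n)}$. Theorem~\ref{thm:typeB} gives $F\hookrightarrow\prod_j(\mathbb{Z}/2\mathbb{Z}\times\mathcal W_{a_j})$ with $\sum_j2^{a_j}=n$. For $a_j=0$, set $R_j=1$, which has weight $1/2=2^{a_j}/2$. For $a_j\geq1$, set $R_j=\mathcal W_{a_j}$, viewed as the tower with all $r_i=2$ and $a_j-1$ stages. As computed in the proof of Theorem~\ref{thm:weightedA}, this tower has weight $2^{a_j-1}$. Summing over $j$ gives $\sum_jw(R_j)=n/2$, as required.

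\emph{Converse: reduction to permutation groups.} Suppose $F\hookrightarrow\prod_j(\mathbb{Z}/2\mathbb{Z}\times R_j)$ with $\sum_jw(R_j)\le n/2$. Each $R_j$ should be turned into a $2$-subgroup of a symmetric group on $2w(R_j)$ points, counting the trivial group as a subgroup of $S_1$, since $2\cdot\tfrac12=1$. For towers, Lemma~\ref{lem:towerdegree} gives $R_j\hookrightarrow S_{2w(R_j)}$. The image is a $2$-group, so it lies in a Sylow $2$-subgroup, which is a product $\prod_i\mathcal W_{b_i}$ over the binary expansion $2w(R_j)=\sum_i 2^{b_i}$ by Lemma~\ref{lem:sylow}.

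\emph{Main obstacle.} A factor $\mathbb{Z}/2\mathbb{Z}\times\prod_i\mathcal W_{b_i}$ has only one central $\mathbb{Z}/2\mathbb{Z}$, whereas $\mathscr B_n$ requires one for each block. This causes no problem, because $\mathbb{Z}/2\mathbb{Z}\times\prod_i\mathcal W_{b_i}$ embeds into $\prod_i(\mathbb{Z}/2\mathbb{Z}\times\mathcal W_{b_i})$ diagonally on the $\mathbb{Z}/2\mathbb{Z}$ factor, and the block sizes still sum to $2w(R_j)$. The cleanest way to carry this out is through condition~(2) of Theorem~\ref{thm:typeB}. Let $P=\prod_jP_j\le S_n$, where each $P_j$ is the image of $R_j$ acting on a disjoint interval of size $2w(R_j)$, and the remaining points are fixed. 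Then $E_B(P)$ contains the sign changes $\mathrm d_{\Omega_j}$, and these commute with $P$. So $\prod_j(\mathbb{Z}/2\mathbb{Z}\times R_j)\hookrightarrow E_B(P)\times P\cong(\mathbb{Z}/2\mathbb{Z})^{c(P)}\times P$, sending the $j$th $\mathbb{Z}/2\mathbb{Z}$ to $\mathrm d_{\Omega_j}$. The key check is that this map is injective. It is, because the intervals $\Omega_j$ are nonempty and disjoint, so the $\mathrm d_{\Omega_j}$ are independent. This also covers the case $R_j=1$ with $|\Omega_j|=1$. Theorem~\ref{thm:typeB} then gives $F\hookrightarrow C_{W(B_n)}$. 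The one point to double-check is the fractional weight: since $\sum_j|\Omega_j|=2\sum_jw(R_j)\le n$, the intervals fit inside $\{1,\ldots,n\}$.
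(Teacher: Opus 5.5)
Your proposal is correct and follows the paper's proof. The forward direction takes $R_j=\mathcal W_{a_j}$ from Theorem~\ref{thm:typeB}, or $R_j=1$ when $a_j=0$. The converse places the $R_j$ on disjoint intervals $\Omega_j$ of size $2w(R_j)$ and applies condition~(2) of Theorem~\ref{thm:typeB}. Your explicit sign changes $\mathrm d_{\Omega_j}\in E_B(P)$ are a concrete version of the paper's count $c(P)\geq k$, and your Sylow/diagonal-embedding remark is valid but unnecessary once you pass to condition~(2).
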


\begin{proof}
Suppose that $F\hookrightarrow C_{W(B_n)}$. By Theorem~\ref{thm:typeB}, there exist integers $a_1,\ldots,a_k\geq0$ such that
\[
 n=\sum_{j=1}^k2^{a_j},
 \qquad
 F\hookrightarrow\prod_{j=1}^k(\mathbb{Z}/2\mathbb{Z}\times\mathcal W_{a_j}).
\]
Take $R_j=\mathcal W_{a_j}$, using the tower construction above when $a_j\geq1$ and the convention $R_j=1$ when $a_j=0$. In both cases, $w(R_j)=2^{a_j-1}$, so $\sum_{j=1}^k w(R_j)=n/2$.

Now suppose that $F\hookrightarrow\prod_{j=1}^k(\mathbb{Z}/2\mathbb{Z}\times R_j)$ and $\sum_jw(R_j)\leq n/2$. Choose disjoint consecutive intervals $\Omega_j\subseteq\{1,\ldots,n\}$ with $|\Omega_j|=2w(R_j)$. Embed each tower $R_j$ into $S_{\Omega_j}$ using Lemma~\ref{lem:towerdegree}. If $R_j=1$, then $|\Omega_j|=1$ and $S_{\Omega_j}=\{1\}$. Identify the groups with their images and let
\[
 P=\prod_{j=1}^k R_j\leq S_n.
\]
Each $\Omega_j$ is nonempty and $P$-invariant, so $c(P)\geq k$. Consequently,
\[
 F\hookrightarrow\prod_{j=1}^k(\mathbb{Z}/2\mathbb{Z}\times R_j)
 \cong(\mathbb{Z}/2\mathbb{Z})^{k}\times P
 \hookrightarrow(\mathbb{Z}/2\mathbb{Z})^{c(P)}\times P.
\]
Each $R_j$ is a $2$-group, since it is either trivial or obtained from $\mathbb{Z}/2\mathbb{Z}$ by iterated direct powers and semidirect products with $\mathbb{Z}/2\mathbb{Z}$, so $P$ is a $2$-subgroup of $S_n$. Therefore, Theorem~\ref{thm:typeB} gives $F\hookrightarrow C_{W(B_n)}$.
\end{proof}

The inequality in Theorem~\ref{thm:weightedB} may be replaced by equality. Indeed, $n-2\sum_jw(R_j)$ is a nonnegative integer. Adjoin that many factors with $R_j=1$. Each contributes a factor $\mathbb{Z}/2\mathbb{Z}$ to the product and increases the total weight by $1/2$. The total weight then equals $n/2$, and the original product embeds into the enlarged product.

\subsection{Counterexamples to Genevois's conjecture}
Conjecture~8.5 in \cite{Genevois25} proposes one tower with $r_1\cdots r_s\leq n/2$. Under the stated convention $r_i\geq1$, a parameter equal to one adds a direct factor $\mathbb{Z}/2\mathbb{Z}$ without changing the product of parameters. For example, $n=2$ and $r_1=1$ allow $(\mathbb{Z}/2\mathbb{Z})^{2}$ on the proposed right-hand side, whereas $J_2\cong\mathbb{Z}/2\mathbb{Z}$. The additive cost in \eqref{eq:weight} addresses this issue.

The following proposition explains why Theorem~\ref{thm:weightedA} uses products of towers.

\begin{proposition}\label{prop:twenty}
Let $P_{20}$ be a Sylow $2$-subgroup of $S_{20}$. Then $P_{20}$ is not isomorphic to any tower. Consequently, $P_{20}$ does not embed into any tower $T$ satisfying $T\hookrightarrow J_{20}$.
\end{proposition}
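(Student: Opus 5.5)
The plan is to compare orders and direct-product structure. By Lemma~\ref{lem:sylow}, writing $20=16+4$ gives
\[
 P_{20}\cong\mathcal W_4\times\mathcal W_2,\qquad |P_{20}|=2^{15}\cdot 2^{3}=2^{18}.
\]
Suppose $T=T_s$ is a tower with $T\cong P_{20}$. Since $|T_0|=2\neq 2^{18}$, we have $s\geq1$. I would then split into two cases according to the last parameter $r_s$.

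\emph{Case $r_s=1$.} Here $T\cong T_{s-1}\times\mathbb{Z}/2\mathbb{Z}$, so $P_{20}$ would have a direct factor $\mathbb{Z}/2\mathbb{Z}$. To rule this out, I first show that each $\mathcal W_a$ with $a\geq1$ is directly indecomposable. The center of $\mathcal W_a$ has order $2$, by induction from the wreath structure: the center of $(\mathcal W_{a-1}\times\mathcal W_{a-1})\rtimes\mathbb{Z}/2\mathbb{Z}$ is the diagonal copy of $Z(\mathcal W_{a-1})$. On the other hand, a direct product of two nontrivial $2$-groups has center of order at least $4$. By the Krull--Schmidt theorem for finite groups, the indecomposable direct factors of $P_{20}$ are then $\mathcal W_4$ and $\mathcal W_2$, up to isomorphism. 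Neither is $\mathbb{Z}/2\mathbb{Z}$, which is a contradiction.

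\emph{Case $r_s=r\geq2$.} Here $2^{18}=|T|=2|T_{s-1}|^{r}$, so $|T_{s-1}|^{r}=2^{17}$. Since $17$ is prime, this forces $r=17$ and $T_{s-1}\cong\mathbb{Z}/2\mathbb{Z}$. Hence $T\cong(\mathbb{Z}/2\mathbb{Z})^{17}\rtimes\mathbb{Z}/2\mathbb{Z}$, with the involution reversing coordinates. I would compute centers to get a contradiction.

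For $T$: an element of the form $(x;t)$, where $t$ is the involution, fails to commute with a unit vector $e_1$, because $t$ sends $e_1$ to $e_{17}\neq e_1$. So $Z(T)$ consists exactly of the palindromic vectors and has order $2^{9}$.

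For $P_{20}$: we have $Z(P_{20})=Z(\mathcal W_4)\times Z(\mathcal W_2)$, which has order $4$. Since $2^9\neq 4$, this case is impossible too. (Alternatively, $T$ has an abelian subgroup of index $2$, while $P_{20}$ does not.) The only real work in the whole argument is the center computation for $\mathcal W_a$ together with the appeal to Krull--Schmidt in the first case.

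\emph{The consequence.} Suppose $P_{20}\hookrightarrow T$ and $T\hookrightarrow J_{20}$. By Theorem~\ref{thm:typeA}, $T$ embeds into $P_{20}$, so $|T|\leq|P_{20}|$. Combined with $P_{20}\hookrightarrow T$, this gives $|T|=|P_{20}|$, and the embedding $P_{20}\hookrightarrow T$ is an isomorphism. This contradicts the first part.
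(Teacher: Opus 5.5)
Your proof is correct, and it takes a different route from the paper's.

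The paper runs a backward search through the tower recursion using two invariants: $e=\log_2|\cdot|$ and the minimal number of generators $d$. The latter is computed from the Burnside basis theorem and the Frattini quotient $(A/\Phi(A))^{\lceil r/2\rceil}\times\mathbb{Z}/2\mathbb{Z}$ of $A\mathbin{\wr_{r}}\mathbb{Z}/2\mathbb{Z}$. Starting from $(e,d)=(18,6)$, this search pins down the single candidate parameter sequence $(2,2,1,2,1)$. That candidate is then excluded because its center has order $8$ rather than $4$. In this search, the option $r=17$ at the first step is discarded via $d$, since $5/\lceil 17/2\rceil$ is not an integer.

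You instead inspect only the last construction step:
\begin{itemize}
\item If $r_s=1$, a direct factor $\mathbb{Z}/2\mathbb{Z}$ is ruled out by Remak--Krull--Schmidt, together with the indecomposability of $\mathcal W_a$ that follows from $|Z(\mathcal W_a)|=2$.
\item If $r_s\geq2$, the primality of $17=e(P_{20})-1$ forces $r_s=17$ and $T_{s-1}\cong\mathbb{Z}/2\mathbb{Z}$. Your count of palindromic vectors, $|Z(T)|=2^9\neq4$, then finishes the argument.
\end{itemize}

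Your argument is shorter and avoids both the Frattini-quotient computation and the chain search. The price is that you quote Krull--Schmidt and rely on the arithmetic accident that $17$ is prime. The paper's $(e,d)$ search is more mechanical: it adapts to other values of $n$ where $e(P_n)-1$ is composite and one cannot stop after a single step. In exchange, the paper needs a separate center comparison at the end, which your Case $r_s=1$ makes unnecessary.

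Your derivation of the consequence (orders force $T\cong P_{20}$) is the same as the paper's.
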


\begin{proof}
Since $20=2^4+2^2$, Lemma~\ref{lem:sylow} gives $P_{20}\cong\mathcal W_4\times\mathcal W_2$. Suppose that a tower $T=T_s$ is isomorphic to $P_{20}$, with construction
\[
 T_0=\mathbb{Z}/2\mathbb{Z},\qquad
 T_i=T_{i-1}\mathbin{\wr_{r_i}}\mathbb{Z}/2\mathbb{Z}
 \quad(1\leq i\leq s).
\]

For a finite $2$-group $G$, let $d(G)$ denote its minimal number of generators, and put $e(G)=\log_2|G|$. Write $e_i=e(T_i)$ and $d_i=d(T_i)$. Then
\begin{equation}\label{eq:edrecursion}
 e_0=d_0=1,\qquad
 e_i=1+r_i e_{i-1},\qquad
 d_i=1+\left\lceil\frac{r_i}{2}\right\rceil d_{i-1}.
\end{equation}

To justify the formula for $d_i$, let $\Phi(A)$ denote the Frattini subgroup of a finite $2$-group $A$. The Burnside basis theorem gives
\[
 \Phi(A)=A^2[A,A],
 \qquad
 d(A)=\dim_{\mathbb F_2}(A/\Phi(A)),
\]
where $A^2=\langle a^2:a\in A\rangle$. In the Frattini quotient of $A\mathbin{\wr_{r}}\mathbb{Z}/2\mathbb{Z}$, the copies of $A/\Phi(A)$ in positions $j$ and $r-j+1$ are identified. The additional involution contributes an independent factor $\mathbb{Z}/2\mathbb{Z}$, so
\[
 \frac{A\mathbin{\wr_{r}}\mathbb{Z}/2\mathbb{Z}}{\Phi(A\mathbin{\wr_{r}}\mathbb{Z}/2\mathbb{Z})}
 \cong
 (A/\Phi(A))^{\lceil r/2\rceil}\times\mathbb{Z}/2\mathbb{Z}.
\]
This proves the formula for $d_i$.

We now determine the possible parameters $r_i$. The recurrence with $r_i=2$ gives $d(\mathcal W_a)=a$. Since $d$ is additive on direct products of finite $2$-groups, we obtain
\[
 e(P_{20})=15+3=18,\qquad
 d(P_{20})=4+2=6.
\]
Thus $(e_s,d_s)=(18,6)$.

Reading \eqref{eq:edrecursion} backwards, a step with parameter $r$ must have preceding pair
\[
 (e',d')
 =
 \left(
  \frac{e-1}{r},
  \frac{d-1}{\lceil r/2\rceil}
 \right),
\]
with both coordinates positive integers. Since $r_i\geq1$, the recurrence gives $d_i\geq d_{i-1}+1$, and hence $d_s\geq s+1$. Note that if $d_s=1$, then $s=0$ and $e_s=1$. If $d_s=2$, then $s=1$, and
\[
 2=d_1=1+\left\lceil\frac{r_1}{2}\right\rceil.
\]
Thus $r_1=1$ or $2$, giving $e_s=2$ or $3$.

At $(18,6)$, the only possible parameter is $r=1$, giving $(17,5)$. The possible preceding pairs for $(17,5)$ are
\[
 (16,4),\qquad (8,4),\qquad (4,2),\qquad (2,1).
\]
The last two are excluded by the restrictions above. The pair $(16,4)$ is also impossible: its possible preceding pairs are $(15,3)$ and $(3,1)$. The latter is excluded, while the only preceding pairs for $(15,3)$ are $(14,2)$ and $(7,2)$, both excluded. Thus, $(17,5)$ must be preceded by $(8,4)$, with parameter $r=2$.

The only preceding pair for $(8,4)$ is $(7,3)$. For $(7,3)$, the possibilities are $(6,2)$, $(3,2)$, and $(2,1)$, leaving only $(3,2)$. Finally, the possibilities for $(3,2)$ are $(2,1)$ and $(1,1)$, leaving only $(1,1)$. Consequently, the unique possible chain is
\[
 (18,6)\xrightarrow{1}(17,5)
 \xrightarrow{2}(8,4)
 \xrightarrow{1}(7,3)
 \xrightarrow{2}(3,2)
 \xrightarrow{2}(1,1).
\]
Reading the parameters in the construction order gives $(r_1,\ldots,r_s)=(2,2,1,2,1)$.

It remains to distinguish this tower from $P_{20}$. For the sequence above,
\[
 T\cong
 \bigl((\mathcal W_3\times\mathbb{Z}/2\mathbb{Z})\mathbin{\wr_{2}}\mathbb{Z}/2\mathbb{Z}\bigr)
 \times\mathbb{Z}/2\mathbb{Z}.
\]
For every nontrivial group $A$,
\[
 Z(A\mathbin{\wr_{2}}\mathbb{Z}/2\mathbb{Z})
 =
 \{(z,z):z\in Z(A)\}
 \cong Z(A).
\]
Starting from $\mathcal W_1=\mathbb{Z}/2\mathbb{Z}$, this gives $Z(\mathcal W_a)\cong\mathbb{Z}/2\mathbb{Z}$ for every $a\geq1$. Hence,
\[
 \begin{aligned}
 |Z(P_{20})|
 &=|Z(\mathcal W_4)|\,|Z(\mathcal W_2)|=4,\\
 |Z(T)|
 &=|Z(\mathcal W_3\times\mathbb{Z}/2\mathbb{Z})|\,|\mathbb{Z}/2\mathbb{Z}|=8.
 \end{aligned}
\]
Their centers have different orders, contradicting $T\cong P_{20}$. Therefore, $P_{20}$ is not isomorphic to any tower.

Finally, suppose that $P_{20}\hookrightarrow T\hookrightarrow J_{20}$ for some tower $T$. By Theorem~\ref{thm:typeA}, we also have $T\hookrightarrow P_{20}$. Comparing orders gives $|T|=|P_{20}|$, so $T\cong P_{20}$, contradicting the result just proved. This establishes the final assertion.
\end{proof}

\end{document}